\theoremstyle{theoremstyleone}%
\newtheorem{theorem}{Theorem}
\newtheorem{prop}[theorem]{Proposition}%
\newtheorem{lemma}[theorem]{Lemma}%
\newtheorem{cor}[theorem]{Corollary}%
\theoremstyle{theoremstyletwo}%
\theoremstyle{theoremstylethree}%
\newtheorem{defi}{Definition}%
\theoremstyle{definition}
\newtheorem{rem}[theorem]{Remark}
\newcommand{\R}{\bm{\mathbb{R}}}
\newcommand{\W}{\bm{\mathrm{W}}}
\newcommand{\z}{\bm{\mathrm{z}}}
\newcommand{\Sone}{\bm{\mathbb{S}}^{1}}
\newcommand{\T}{\mathbb{T}}
\newcommand{\Z}{\bm{\mathbb{Z}}}
\renewcommand{\div}{\mathrm{div}\,}
\newcommand{\supp}{{{\mathrm{spt}}\;}}
\begin{document}

\title{Dynamics of density patches in infinite Prandtl number convection}


\author{Hezekiah Grayer II}
\date{}

\maketitle
\begin{abstract}
This work examines the dynamics of density patches 
in the 2D zero-diffusivity Boussinesq system modified such that
momentum is in a large Prandtl number balance. We establish the global
well-posedness of this system for compactly supported and bounded
initial densities, and then examine the regularity of the
evolving boundary of patch solutions.
For $k \in \{0,1,2\}$, we prove the
global in time persistence  of
$C^{k+\mu}$-regularity, where $\mu \in (0,1)$, for the density patch
boundary via estimates of
singular integrals.
We conclude with a 
simulation of an initially circular
density patch via a  level-set method.  The simulated patch boundary forms corner-like structures with
growing curvature, and yet our analysis shows the 
curvature will be  bounded for all finite times. 
\end{abstract}
\section{Introduction}
In  Earth's mantle and many highly-pressurized gases, the rate of
thermal diffusion $\kappa$ is neglible compared to the rate of momentum
dissipation $\nu$. We will analyze the dynamics of density patches, modeling idealized plumes, in such fluids where the  nondimensional Prandtl number $\mathrm{Pr} = \nu / \kappa$ is large. 

Many scenarios of convection, including those which concern the present
study, are well-modeled by the Boussinesq system:
\begin{equation*}
\label{eq:B}
\tag{$B$}
\left \{
  \begin{aligned}
    \left( \frac{\partial}{\partial t} + u\cdot\nabla\right) \theta &= \kappa\Delta
    \theta, \\
    \left( \frac{\partial}{\partial t} + u\cdot\nabla\right) u &= \nu\Delta u -
    \nabla \Pi  + \theta e_2, \\
    \mathrm{div}\, u &= 0,
  \end{aligned}
\right.
\end{equation*}
with initial temperature and velocity $(\theta_0,u_0): \R^2 \to \R \times \R^2$.  The unit
vector  $e_2 = (0,1)$ is the vertical direction
antiparallel to gravity, and the pressure $\Pi$ enforces
incompressibility at each instance of time.

For the system (\ref{eq:B}), global in time regularity has been well established for
fixed positive Prandtl number with $\kappa,\nu$ both positive (see e.g.
\cite{cannon1980initial}). The global regularity for $\kappa,\nu$ both zero
(degenerate $\mathrm{Pr}$) remains unknown.

Known is  the long time persistence of regularity for the partial viscosity scenarios---$\kappa$
 positive with zero $\nu$, and $\nu$ positive with zero
$\kappa$---given $(\theta_0,u_0) \in  H^{m}(\R^2)$ with integer $m > 2$
(\cite{Chae2005GlobalRF,Hou2004GLOBALWO}). For initial data of such regularity,
\cite{Chae2005GlobalRF} also provides the negative answer to XXI Century Problem 3 (see \cite{moffatt2001some})
concerning the development of $\nabla \theta$ singularities in the limit
$\kappa \to 0$ with fixed $\nu$ positive (formally $\mathrm{Pr} \to \infty$).



The question of
well-posedness and global in time  regularity for less regular initial data has been actively studied (e.g.
\cite{Abidi2007OnTG,hmidi2007global,danchin2008theoremes}) 
for the zero diffusivity system 
\begin{equation}
  \tag{$B_1$}
\label{eq:B1}
\{\text{system (\ref{eq:B}) with }\kappa = 0 \text{ and }\nu\text{ positive}\}.
\end{equation}
Without thermal diffusivity, convective plumes are idealized as density distributions initially of the form
\begin{equation}
\label{eq:init}
\theta_0 = \bm{1} _{P_0}, 
\end{equation}
where $P_0 \subset \R^2$ is simply connected and bounded. If $u_0$ is
sufficiently regular, then the regularity
results in \cite{Chae2005GlobalRF} guarantee a unique solution 
to system (\ref{eq:B1}) of the form
\begin{equation}
\label{eq:patch_t}
\theta(t) = \bm{1}_{P(t)},
\end{equation}
where $u(t)$  is regular enough to define the flow map 
$X(\cdot,t) : \R^2 \mapsto \R^2$
such that the evolving region
$P(t) = X(P_0,t)$
remains simply connected and bounded. However, inferring the
regularity of the evolving boundary $\partial P(t)$ in time is a
classical problem. 
Given $k \in \Z_{+} = \{1,2,\ldots\} $ and $\mu \in
(0,1)$, we write
  $\partial P(t) \in C^{k+\mu}$ if there exists some  
 $\z : \Sone \to \R^2$ such that 
\begin{equation*}
\partial P(t) = \{\z(\alpha) \mid \alpha
\in \Sone \} \quad \text{and} \quad \z \in C^{k+\mu}(\Sone).     
\end{equation*}

The question of persistence of regularity for the patch boundary is
precisely whether  $\partial P(t) \in C^{k+\mu}$ given  $\partial P(0) =
\partial P_0 \in C^{k+\mu}$. This line of inquiry has its origin in the
study of vortex patches in the two-dimensional incompressible Euler
system. At some point, numerical studies like \cite{buttke1989observation,dritschel1990does} were done which had conflicting conclusions on
whether finite time contour singularities formed in the evolution of a vortex patch. Then, \cite{chemin1993persistance} showed global in time persistence of smoothness using paradifferential calculus and
striated regularity methods to obtain the desired regularity of $\nabla u$; soon after, \cite{Bertozzi1993GlobalRF}
proved a similar result using geometric analysis techniques: the vortex patch boundary remains $C^{1+\mu}$ for $\mu
\in (0,1)$, given it was so initially. Another proof of this was
given by \cite{serfati1994preuve}.

More recently for the system  (\ref{eq:B1}), the global persistence of $C^{1 +
\mu}$-regularity
(\cite{Danchin2016GlobalPO}), $C^{2 + \mu}$-regularity
(\cite{gancedo2017global}), and $C^{k+\mu}$-regularity for all $k
\in \Z_{+} $
(\cite{chae2021global}) was
shown for density patches. The $k = 1$ and  $k \in \Z_{+}$ results made special use of striated
regularity estimates in Besov spaces.

The present study connects this inquiry of contour singularity formation
to the program of XXI Century Problem 3. In the limiting dynamics of
large Prandtl number, the fluid velocity is at relative equilibrium in
the thermal diffusive time scale $\tau_\kappa = L^2/\kappa$, where $L$
is the length scale. The nondimensional momentum equation is
\begin{equation}
  \begin{aligned}
  \frac{1}{\mathrm{Pr}} \left(\frac{\partial}{\partial t^{*}} + \underline{u} \cdot
    \nabla^{*} \right)
    \underline{u} &= \Delta^{*} \underline{u} - \nabla^{*}\underline{\Pi}  + \mathrm{Ra} \, \underline{\theta} e_2, \\ 
  \end{aligned}
\label{uPr}
\tag{$*$}
\end{equation}
where the starred derivatives have been rescaled by $\tau_\kappa$ and
$L$, and the variables with underbars are the nondimensional
counterparts to $(u,\theta,\Pi)$. The Rayleigh number $\mathrm{Ra}$ is
independent to $\mathrm{Pr}$. Thus formally the material derivative
in  (\ref{uPr}) vanishes in the limit of large Prandtl number;
specifically, in thermal diffusive time $t^{*} = t/\tau_\kappa$, the
fluid is in equilibrium dominated by viscosity.
Accordingly, we consider the (dimensional) zero diffusivity system
(\ref{eq:B1}) with the modified momentum equation
\begin{equation}
\label{eq:u}
 - \Delta u + \nabla\Pi  = \theta e_2. \\ 
\end{equation} 

We thus consider patch dynamics in the system given by 
\begin{equation*}
\label{eq:Bs}
\tag{$B_{*}$}
\left \{
  \begin{aligned}
    \left( \frac{\partial}{\partial t} + u\cdot\nabla\right) \theta &= 0, \\
    -\Delta u +
    \nabla \Pi  &= \theta e_2, \\
    \mathrm{div}\, u &= 0,
  \end{aligned}
\right.
\end{equation*}
with initial data $\theta_0: \R^2 \to \R$. Above, the solution
$\theta(x,t)$ is an active scalar function of  $x\in \R^2$ and $t\in
\R$, where the corresponding velocity $u(t) : \R^2 \to \R^2$ is given by the balance (\ref{eq:u}) at each instance of time.

The system \eqref{eq:Bs} is known also as transport-Stokes, and can
model the sedimentation  of inertialess particles in Stokes flow
\cite{hofer2018sedimentation,mecherbet2018sedimentation} . Let us
briefly recount the literature concerning global well-posedness of this
system in
unbounded domains. Global well-posedness  for $L^{\infty}$
initial data holds on the infinite strip $ \R \times (0,1)$ with no-slip boundary conditions and a flux condition
 imposed
\cite{leblond2022well}.
On the whole space $\R^{3}$, the analogue of system
\eqref{eq:Bs} is globally well-posed for initial data in $L^{1} \cap
L^{\infty}$ \cite{hofer2021influence}; however, whether this is true in
$2d$ without additional assumptions is unknown.
Recent discussions of the
 well-posedness problem may be found in
\cite{mecherbet2022few,cobb2023well}.

The  question of regularity for  an interface of density in the
transport-Stokes system was first addressed in \cite{antontsev2000free}.
On bounded domains in $\R^{d}$ with $d \geq 2$, \cite{antontsev2000free}
establishes  global well-posedness 
for piecewise constant initial densities, where the boundary conditions
are no-slip, and then shows that interfaces which
are initially $C^{1 + \mu}$, will remain so for all
time. More recently, \cite{gancedo2022long} considers a contour dynamics equation derived from system
\eqref{eq:Bs} on the horizontally periodic strip $\T \times \R$, wherein interfaces are shown to be locally
well-posed in $C^{1 + \mu}$, even in the Rayleigh-Taylor unstable regime.


Our contributions concern the system  \eqref{eq:Bs} on the whole space
$\R^2$. We first establish the global well-posedness of the system for
initial data of
Yudovich-type, meaning when  $\theta_0 \in   L^{1} \cap L^{\infty}$ is
compactly supported. In particular, initial
data (\ref{eq:init}) admit patch solutions \eqref{eq:patch_t}
and we may study the regularity of  $\partial
P(t)$ in time. Then, we prove the global 
persistence of $C^{k+\mu}$ regularity for $\partial P$  when $k
  \in \{0,1,2\} $.
  Further, we simulate on $\T^2$ the dynamics of an initially circular
  patch, and observe the interface develop corner-like structures with
  growing curvature. The results here rule out the 
  possibility of a finite-time curvature singularity; however, corner formation at infinite time remains 
possible.

%

The content is organized as follows. The main contributions are
presented in Section \ref{sec:main}. The key Lemmas \ref{lem:pointu}
and \ref{lem:key} are proved, giving the details of the relevant singular
integrals, in Section \ref{sec:key}. The global well-posedness of the system
(\ref{eq:Bs}), in particular Theorem \ref{thm:well}  concerning the
Yudovich class,  is addressed in Section \ref{sec:well}. 
In Section \ref{sec:reg}, the main Theorem \ref{thm:reg} on the persistence of regularity for the
evolving boundary $\partial P(t)$ is finally proved. 
In Section \ref{sec:fig}, the numerical simulation in Figure
\ref{fig:circ} is described in detail.

\section{Main Results}\label{sec:main}
We suppose an initial distribution 
 (\ref{eq:init}) in the Cauchy problem for system (\ref{eq:Bs}) and show
 the ensuing solution velocity $u(t)$ is regular enough such that the
 solution temperature has patch representation \eqref{eq:patch_t},
where the evolving region 
\begin{equation}
\label{eq:}
P(t) = X(P_0,t)  
\end{equation}
is given by the flow map
$X(\cdot,t)$ of our fluid velocity. 

We start by introducing the
streamfunction $\psi$ such that $u = \nabla^{\perp} \psi$
with $\nabla^{\perp} = (-\partial_2,\partial_1)$, and then the
momentum equation in (\ref{eq:Bs}) yields the following expression for vorticity
$\omega \equiv \nabla^{\perp} \cdot u$ at each instance of time:
\begin{equation}
\label{eq:omethet}
  -\Delta \omega = \partial_1 \theta.
\end{equation}

The fundamental solution of the bilaplacian $\Delta^2$ in $\R^2$ is
\begin{equation}
\frac{1}{8\pi} |z|^2(\log|z| - 1)
\end{equation}
where $z \in \R^2$. Given the fact $\Delta \psi = \omega$ generally, and
assuming $\theta$ in equation (\ref{eq:omethet}) has
enough decay at infinity, it follows
$\psi = -(\Delta^{2})^{-1}\partial_1\theta$ such that $\psi = K * \theta$ with kernel
\begin{equation}
K(z) = -\frac{z_1}{4\pi}\left(\log{|z|}-\frac{1}{2}\right).
\label{K}
\end{equation}
More precisely, we find the expression for vorticity
\begin{equation}
\omega(x)  =  -\frac{1}{2\pi}\int_{\R^2}\frac{x_1-y_1}{|x-y|^2}
\theta(y) \, dy,
\label{omegat}
\end{equation}
and note that the integral converges absolutely for $\theta$ in $Y \equiv L^1(\R^2)\cap
L^{\infty}(\R^2)$. Denoting the integral as $\omega = Q *
\theta$, this is a convolution with a singular kernel which is homogeneous of
degree $-1$.
Thus, the operator $(Q*\vphantom{k}): Y \to L^p$ giving vorticity  from
temperature is bounded for $2 < p \leq \infty$; we give a direct proof
of this claim in Section \ref{sec:key}.

Let $G (z)\equiv \nabla^\perp K(z)$, then the integral $u = G * \theta$, explicitly
\begin{equation}
u(x) = \int_{\R^2}G(x-y)\theta(y) \, dy,
\label{UintT}
\end{equation}
converges absolutely if $\theta \in Y$ is compactly supported. More
quantitatively:
\begin{lemma}
  \label{lem:pointu}
  Suppose $\theta \in L^{\infty} $ with $ \supp \theta \subset B(0,R)$.
  Let $u = G  * \theta$. Then,
  \begin{equation*}
  \label{eq:uLpT}
  | u(x) |  \leq C (1 + \log (|x| + R + 1)) \|\theta \|_{Y}.
  \end{equation*}
\end{lemma}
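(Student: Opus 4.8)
The plan is to estimate $u(x) = \int_{\R^2} G(x-y)\theta(y)\,dy$ by splitting the integral according to the size of $|x-y|$, exploiting the explicit form of the kernel $G = \nabla^\perp K$, where $K$ is given by \eqref{K}. A direct computation gives $G(z) = \nabla^\perp\!\left(-\frac{z_1}{4\pi}(\log|z| - \tfrac12)\right)$, so each component of $G$ is a sum of a term homogeneous of degree $0$ (bounded, coming from differentiating $z_1$) and a term of the form $z_1 z_j/|z|^2$ (also homogeneous of degree $0$, coming from differentiating the logarithm). Hence $|G(z)| \leq C(1 + |\log|z||)$ for all $z \neq 0$; in particular $G$ is locally integrable with a mild logarithmic factor and grows only like $\log|z|$ at infinity. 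This bound on $|G|$ is the one fact I would extract first.

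Next I would decompose $\R^2 = A_1 \cup A_2 \cup A_3$ relative to the point $x$: the unit ball $A_1 = \{|x-y| \le 1\}$, the far region $A_2 = \{|x-y| > 1,\ |y| > R\}$ where $\theta$ vanishes, and the remaining bounded piece $A_3 = \{|x-y| > 1,\ |y| \le R\}$. On $A_1$ we use $|\theta| \le \|\theta\|_{L^\infty} \le \|\theta\|_Y$ and $\int_{|z|\le 1}(1 + |\log|z||)\,dz < \infty$, giving a contribution bounded by $C\|\theta\|_Y$. On $A_2$ the integrand is zero, so that region contributes nothing — this is where compact support is essential, since otherwise the $\log|z|$ growth of $G$ would prevent convergence. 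On $A_3$ we have $|x-y| > 1$, so $|\log|x-y|| = \log|x-y| \le \log(|x| + R)$ (using $|x-y| \le |x| + |y| \le |x| + R$), and thus $|G(x-y)| \le C(1 + \log(|x| + R + 1))$ uniformly on $A_3$; integrating $|\theta|$ over $A_3$ against this uniform bound yields $\le C(1 + \log(|x|+R+1))\|\theta\|_{L^1} \le C(1 + \log(|x|+R+1))\|\theta\|_Y$. Adding the three contributions gives the claimed estimate.

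The main obstacle, such as it is, is purely bookkeeping: one must make sure the logarithmic weight is organized so that it survives with the correct form $1 + \log(|x| + R + 1)$ uniformly in $x$ — in particular handling the borderline cases $|x-y|$ near $1$ (where $\log|x-y|$ changes sign) and small $|x|$ cleanly, which is why I write $\log(|x|+R+1)$ with the $+1$ inside. A minor point worth stating carefully is the verification that the degree-$0$ homogeneous pieces of $G$ are genuinely bounded (no hidden singularity at $z = 0$ beyond the logarithm), which follows from $|z_1 z_j/|z|^2| \le 1$. No cancellation or singular-integral theory is needed here — unlike the bound on $Q*\theta$ promised in the same section, where the degree $-1$ homogeneity forces a genuine Calderón–Zygmund-type argument — so the proof is elementary once the kernel bound $|G(z)| \le C(1 + |\log|z||)$ is in hand.
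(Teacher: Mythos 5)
Your proof is correct and follows essentially the same route as the paper: the kernel bound $|G(z)| \le C(1+|\log|z||)$, a split at $|x-y|=1$, the $L^\infty$ norm against the locally integrable logarithm near $x$, and the $L^1$ norm against the uniform logarithmic bound on the far region restricted to $\supp\theta$. Your use of $|x-y|\le |x|+R$ on the support replaces the paper's case distinction $|x|\gtrless R+1$, but this is only a cosmetic difference.
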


From here, we find the gradient as $\nabla u = \nabla G *
\theta$, and further, we compute the principal value of the second gradient of velocity $\nabla \nabla u(x)$. With the kernel $\nabla \nabla G  : \R^2 \to \R^{2\times 2 \times 2}$, we have
\begin{equation}
\label{eq:gradUkernel}
\nabla  \nabla  u(x)  =  \theta(x) \bm{\mathrm{E}} + \frac{1}{4 \pi} \mathrm{pv}
\int_{\R^2} \theta(x - z ) \nabla \nabla G(z)
 \, dz.
\end{equation}
The entries of the $\bm{\mathrm{E}} \in \R^{2\times 2
\times 2}$, as well as $G$ and its gradients, $\nabla G$ and
$\nabla \nabla G$, are given in 
Section \ref{sec:key}. Of special note here is that the
integral operator given by kernel $\nabla \nabla G$  is 
Calder\'{o}n-Zygmund in each entry, thus bounded on $L^p$ for $ p \in
(1, \infty)$. 
Combining this with the previous observations, we deduce:
\begin{lemma}
  \label{lem:key}
  Suppose $\theta\in Y$. Let $
  \nabla u =  \nabla G
  * \theta$. Then for $\mu \in (0,1)$, there exists constant $C_{\mu}$ 
  depending only on $\mu$ such that
  \begin{equation*}
      \|\nabla u\|_{C^{\mu}} \leq C_{\mu} \|\theta\|_{Y}.
  \end{equation*}
\end{lemma}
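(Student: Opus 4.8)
\medskip
\noindent\textbf{Proof strategy.}
The plan is to estimate the two pieces of $\|\nabla u\|_{C^{\mu}}=\|\nabla u\|_{L^{\infty}}+[\nabla u]_{C^{\mu}}$ separately, using only two size bounds on the kernels that are recorded explicitly in Section~\ref{sec:key}: since the logarithm in $K$ differentiates away, $\nabla G$ is homogeneous of degree $-1$, so $|\nabla G(z)|\le C|z|^{-1}$ on all of $\R^{2}$; and $\nabla\nabla G$ is homogeneous of degree $-2$ (Calder\'on--Zygmund), so $|\nabla\nabla G(z)|\le C|z|^{-2}$. Because $\nabla G$ is only weakly singular at the origin and decays like $|z|^{-1}$ at infinity, $\nabla G*\theta$ converges absolutely and uniformly in $x$ for every $\theta\in Y$---compact support is not needed here---so $\nabla u$ is well defined pointwise. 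For the sup bound, split $\nabla u(x)=\int_{|x-y|\le 1}\nabla G(x-y)\theta(y)\,dy+\int_{|x-y|>1}\nabla G(x-y)\theta(y)\,dy$: the first integral is $\le\|\theta\|_{L^{\infty}}\int_{|z|\le 1}|\nabla G(z)|\,dz\lesssim\|\theta\|_{L^{\infty}}$ since $|z|^{-1}$ is integrable near the origin in $\R^{2}$, and the second is $\le\bigl(\sup_{|z|>1}|\nabla G(z)|\bigr)\|\theta\|_{L^{1}}\lesssim\|\theta\|_{L^{1}}$. Hence $\|\nabla u\|_{L^{\infty}}\lesssim\|\theta\|_{Y}$.

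The H\"older seminorm is the substantive step. Fix $x,x'$ and put $h:=|x-x'|$; when $h\ge\tfrac12$ the estimate $|\nabla u(x)-\nabla u(x')|\le 2\|\nabla u\|_{L^{\infty}}\lesssim_{\mu} h^{\mu}\|\theta\|_{Y}$ follows from the sup bound, so assume $h<\tfrac12$. Write $\nabla u(x)-\nabla u(x')=\int\bigl(\nabla G(x-y)-\nabla G(x'-y)\bigr)\theta(y)\,dy$ and split at $|x-y|=2h$. On the near region $|x-y|\le 2h$ I bound the two kernels separately; each term is $\le\|\theta\|_{L^{\infty}}\int_{|z|\lesssim h}|\nabla G(z)|\,dz\lesssim h\|\theta\|_{L^{\infty}}\le h^{\mu}\|\theta\|_{L^{\infty}}$. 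On the far region $|x-y|>2h$ the mean value theorem along the segment $[x,x']$ (note $|\xi-y|\ge|x-y|/2$ there) together with $|\nabla\nabla G(z)|\le C|z|^{-2}$ gives $|\nabla G(x-y)-\nabla G(x'-y)|\lesssim h|x-y|^{-2}$, so this contribution is $\lesssim h\int_{|z|>2h}|z|^{-2}|\theta(x-z)|\,dz$; splitting this last integral at $|z|=1$, the outer piece is $\lesssim h\|\theta\|_{L^{1}}$ and the annulus $2h<|z|<1$ contributes $\lesssim h\log(1/h)\,\|\theta\|_{L^{\infty}}\le C_{\mu}h^{\mu}\|\theta\|_{L^{\infty}}$. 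Collecting the pieces yields $[\nabla u]_{C^{\mu}}\le C_{\mu}\|\theta\|_{Y}$, which together with the sup bound proves the lemma.

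The one genuinely delicate point---and the reason $\mu$ is restricted to $(0,1)$---is the factor $\log(1/h)$ produced by integrating the exactly scale-invariant bound $|z|^{-2}$ over $2h<|z|<1$: it is absorbed into $h^{\mu}$ only because $\mu<1$, and $C_{\mu}\to\infty$ as $\mu\uparrow 1$, consistent with $\nabla u$ failing to be Lipschitz in general. As an alternative to the far-field estimate one can instead invoke \eqref{eq:gradUkernel}: the Calder\'on--Zygmund bound for $\nabla\nabla G$ gives $\|\nabla\nabla u\|_{L^{p}}\le C_{p}\|\theta\|_{L^{p}}\le C_{p}\|\theta\|_{Y}$ for every $p\in(1,\infty)$, using the interpolation $\|\theta\|_{L^{p}}\le\|\theta\|_{L^{1}}^{1/p}\|\theta\|_{L^{\infty}}^{1-1/p}$, after which Morrey's inequality applied with $p=2/(1-\mu)$ yields $[\nabla u]_{C^{\mu}}\lesssim\|\nabla\nabla u\|_{L^{p}}\lesssim\|\theta\|_{Y}$---the degeneration as $\mu\uparrow 1$ reappearing through $p\to\infty$.
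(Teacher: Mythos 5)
Your proof is correct, but your main argument takes a different route from the paper. The paper never does the pointwise $x$ vs.\ $x'$ kernel splitting: it first proves $\|\nabla u\|_{L^{p}}\le C_{p}\|\theta\|_{Y}$ for $2<p\le\infty$ (splitting the degree $-1$ kernel at $|z|=1$ and using Young's inequality, as for the vorticity kernel $Q$), then proves $\|\nabla\nabla u\|_{L^{p}}\le C_{p}\|\theta\|_{L^{p}}$ for $1<p<\infty$ by identifying $\nabla\nabla u$ as a constant multiple of $\theta$ plus a Calder\'on--Zygmund operator applied to $\theta$ (the representation \eqref{eq:gradUkernel}), and concludes via $W^{1,p}\subset C^{\mu}$ with $\mu=1-2/p$ (Corollary \ref{cor:all} plus Morrey). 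Your ``alternative'' paragraph is therefore essentially the paper's proof, while your primary argument --- the classical potential-theoretic estimate splitting at $|x-y|=2h$, using $|\nabla G|\lesssim|z|^{-1}$, the mean value theorem with $|\nabla\nabla G|\lesssim|z|^{-2}$ on the far region, and absorbing the resulting $h\log(1/h)$ into $h^{\mu}$ --- is more elementary and self-contained: it avoids the Calder\'on--Zygmund theorem and Sobolev embedding entirely and makes the degeneration of $C_{\mu}$ as $\mu\uparrow 1$ explicit. What the paper's route buys in context is economy: the CZ analysis of $\nabla\nabla G$ (Proposition \ref{prop:DDom}), including the exact delta-term $\bm{\mathrm{E}}$ and the reflection-symmetric kernel $\bm{\mathrm{H}}$, is needed anyway for the $C^{2+\mu}$ patch-boundary analysis, so Lemma \ref{lem:key} comes almost for free once that machinery is in place. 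Both arguments correctly use only $\theta\in Y$ without compact support, and your sup-norm estimate matches the $p=\infty$ case of the paper's proposition.
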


Initial data which satisfy the hypotheses of both Lemmas
\ref{lem:pointu} and \ref{lem:key}, namely
compactly supported and bounded functions, are called Yudovich-type due
to the classical result \cite{yudovich1963non} establishing the global
well-posedness of weak solutions within this class for the
 two-dimensional Euler system. This result allowed the study
of long-time dynamics for vortex patches, in particular the program of
vortex patch boundary regularity.  For the system (\ref{eq:Bs}), the
initial data of our concern are clearly Yudovich-type,
and the Lemmas are powerful enough to establish the following: 
\begin{theorem}
  \label{thm:well}
  Let $\theta_0\in L^1 \cap L^\infty$ and $\supp \theta_0 \subset B(0,R_0)$. Then for
  arbitrary $T$,
  the system (\ref{eq:Bs}) has a unique weak  solution
  $$\theta \in L^{\infty}(0,T; L^1 \cap L^\infty ).$$
  Moreover, for $t \leq T$, the
  following estimates hold:

  \begin{enumerate}
    \item For $1 \leq p \leq \infty$,
      \begin{equation*}
      \label{eq:LpT}
      \|\theta(t)\|_{L^{p}} \leq \|\theta_0\|_{L^p}
      \end{equation*}

    \item For $R(0) = R_0$, we have
      \begin{equation*}
      \label{eq:suppT}
      \supp \theta(t) \subset B(0,R(t))
      \end{equation*}
      with $R(t)$ obeying the differential inequality
       \begin{equation*}
      \label{eq:dRT}
      \frac{dR(t)}{dt} \leq C (1 + \log(2R(t)+1)) \|\theta_0\|_{Y}
      \end{equation*}

    \item For $\mu \in (0,1)$, 
       \begin{equation*}
       \label{eq:cUT}
       \|\nabla u(t)\|_{C^{\mu}} \leq C_\mu \|\theta_0\|_{Y}
       \end{equation*}
       where $u(t) = G * \theta(t)$.
  \end{enumerate}
\end{theorem}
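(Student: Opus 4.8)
The plan is to build the solution as a Lagrangian flow, so that estimates (1)--(3) become a priori bounds the construction enjoys automatically, and to obtain uniqueness by a Gr\"onwall comparison of the flow maps of two solutions. If $\theta(t)=\theta_0\circ X(\cdot,t)^{-1}$ with $X(\cdot,t)$ the flow of the divergence-free field $u(t)=G*\theta(t)$, then (1) holds because the flow is measure preserving. For (2), differentiate $t\mapsto|X(x,t)|$ along a trajectory issuing from $B(0,R_0)$ and invoke Lemma~\ref{lem:pointu} for $u(t)=G*\theta(t)$, whose support lies in $B(0,R(t))$: this gives $\tfrac{d}{dt}|X(x,t)|\le C(1+\log(|X(x,t)|+R(t)+1))\|\theta_0\|_Y$, and since $|X(x,t)|\le R(t)$ one obtains the stated differential inequality by comparison. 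The crucial point is that $\dot R=C(1+\log(2R+1))\|\theta_0\|_Y$ has a solution on all of $[0,\infty)$ (its right-hand side grows at most logarithmically in $R$), so the support --- hence $\|\theta(t)\|_Y=\|\theta_0\|_Y$ --- remains finite on every finite interval. Estimate (3) is then Lemma~\ref{lem:key} applied to $u(t)=G*\theta(t)$, with $\|\theta(t)\|_Y\le\|\theta_0\|_Y$.

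To produce the flow I would iterate: $X^0\equiv\mathrm{id}$, and given $X^n(\cdot,t)$ (the flow of a uniformly Lipschitz field, hence measure preserving), set $\theta^n(t)=\theta_0\circ X^n(\cdot,t)^{-1}$ and $u^n(\cdot,t)=G*\theta^n(t)=\int_{\R^2}G(\cdot-X^n(y,t))\theta_0(y)\,dy$, and let $X^{n+1}$ be the flow of $u^n$. By Lemma~\ref{lem:key} each $u^n$ is Lipschitz in $x$ with a constant $L\lesssim\|\theta_0\|_Y$ uniform in $n,t$, and the support bound from (2) propagates by induction (via the comparison ODE) to place every $\supp\theta^n(t)$ in a fixed ball $B(0,\bar R(T))$ on $[0,T]$, so all the uniform estimates of the theorem hold for the iterates. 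The workhorse for both the construction and uniqueness is the elementary splitting, at $|z-a(y)|=2\delta$ with $\delta=\sup_y|a(y)-b(y)|$, of a difference $\int_{\R^2}[G(z-a(y))-G(z-b(y))]\theta_0(y)\,dy$: on the near part one uses $|G(v)|\lesssim 1+|\log|v||$ with measure preservation, and on the far part the mean value theorem with $|\nabla G(v)|\lesssim|v|^{-1}$; this bounds the $L^\infty(B(0,\bar R(T)))$ norm of that difference (in $z$) by $C(\bar R(T))\|\theta_0\|_\infty\,\delta$. Taking $(a,b)=(X^n(t),X^{n-1}(t))$ makes $\{X^n\}$ Cauchy in $C([0,\tau];C(\supp\theta_0))$ for $\tau$ small (depending only on $\bar R(T),\|\theta_0\|_Y$), hence on $[0,T]$ after finitely many restarts; the limit $X$ is the flow of $u=G*\theta$ with $\theta=\theta_0\circ X^{-1}$, a weak solution because $u$ is Lipschitz and divergence free, and the uniform bounds pass to the limit. (The same splitting gives the continuity in $t$ of each $u^n$ needed to define these flows; alternatively one may regularize $\theta_0$ and pass to a limit by compactness, using Lemmas~\ref{lem:pointu}--\ref{lem:key} for uniform bounds.)

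For uniqueness, let $\theta_1,\theta_2$ be weak solutions with the same data; by Lemma~\ref{lem:key} each $u_i(t)=G*\theta_i(t)$ is Lipschitz in $x$ uniformly in $t$, so the flows $X_i$ exist and (since for a Lipschitz divergence-free velocity the weak solution is unique and transported by the flow) $\theta_i(t)=\theta_0\circ X_i(\cdot,t)^{-1}$, with both supports in a fixed ball $K$. With $\rho(t)=\sup_{x\in\supp\theta_0}|X_1(x,t)-X_2(x,t)|$, subtracting $\dot X_i(x,t)=\int G(X_i(x,t)-X_i(y,t))\theta_0(y)\,dy$ gives a first term $\le L\rho(t)$ (since $u_1$ is Lipschitz) and a second term $\le\|u_1(t)-u_2(t)\|_{L^\infty(K)}\le C(K)\|\theta_0\|_\infty\rho(t)$ by the splitting above. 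Thus $\rho(t)\lesssim\int_0^t\rho(s)\,ds$, and $\rho(0)=0$ forces $\rho\equiv0$ by Gr\"onwall, so $X_1=X_2$ on $\supp\theta_0$ and $\theta_1=\theta_2$.

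The main obstacle is the logarithmic growth of the velocity at spatial infinity from Lemma~\ref{lem:pointu}: it forbids a naive global bound on $\supp\theta(t)$ and forces the comparison-ODE argument in (2); what saves the day is that $\dot R\lesssim(1+\log R)\|\theta_0\|_Y$ still has solutions for all finite time, which is precisely what makes every uniform estimate --- and hence both existence and uniqueness --- global. A secondary point is that, because the kernel $G$ is only mildly singular (size $\sim\log|v|$, with $|\nabla G|\sim|v|^{-1}$), the velocity-difference estimate is \emph{linear} in $\rho$, so a plain Gr\"onwall inequality suffices here and no Osgood or log-Lipschitz argument is required, unlike in the classical vortex-patch problem.
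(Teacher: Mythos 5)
Your argument is sound, and it reaches the theorem by a genuinely different route from the paper. The paper works at the Eulerian level: it iterates the linear transport equations $(\partial_t+u^n\cdot\nabla)\theta^{n+1}=0$, proves convergence of $\theta^n$ in $L^2$ by a factorial (Picard-type) estimate that needs $\|\nabla\theta^n\|_{L^2}$, hence requires $\theta_0\in H^1$ (and ultimately $H^m$, $m>2$, for classical solutions), and only then recovers general Yudovich data by mollifying $\theta_0^\epsilon$ and passing to the limit \`a la Yudovich; uniqueness is an $L^2$ energy estimate on the difference of two weak solutions, again leaning on Lemma~\ref{lem:key}. You instead work at the Lagrangian level, Marchioro--Pulvirenti style: iterate flow maps, prove contraction in the sup norm of trajectories using only measure preservation, the local bound $|G(v)|\lesssim 1+|\log|v||$, and $|\nabla G(v)|\lesssim |v|^{-1}$, so no derivative of $\theta_0$ and no mollification step are ever needed, and uniqueness becomes a Gr\"onwall comparison of the two flows. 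Both proofs hinge on Lemmas~\ref{lem:pointu} and~\ref{lem:key} for the uniform bounds, and your derivation of estimates (1)--(3) (measure preservation, the comparison ODE for $R(t)$, and Lemma~\ref{lem:key} with $\|\theta(t)\|_Y\le\|\theta_0\|_Y$) matches the paper's. What each approach buys: the paper's construction also yields global classical solutions for smooth compactly supported data, which it reuses later, whereas yours is more elementary and self-contained for the Yudovich class and makes explicit the correct observation that $u$ is genuinely Lipschitz here, so plain Gr\"onwall suffices and no Osgood/log-Lipschitz device is required. Two small caveats: in the uniqueness step you invoke, without justification, that an $L^\infty(0,T;Y)$ distributional solution of the linear transport equation with a Lipschitz divergence-free field must equal the push-forward $\theta_0\circ X^{-1}$ (true, but it deserves a line, e.g.\ by duality with the backward problem); and, exactly as the paper does, you tacitly restrict to weak solutions whose support stays bounded so that $u=G*\theta$ is well defined --- so this is a shared, not an additional, gap.
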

\begin{cor}
  \label{cor:patch}
  Let $\theta_0 = \bm{1}_{P_0}$ where $P_0 \subset \R^2$ is simply connected and
  bounded. Then, the unique
  weak solution $\theta$ to system  (\ref{eq:Bs})  has the form 
  \begin{equation*}
    \theta(t) = \bm{1}_{P(t)}
  \end{equation*}
  where $P(0) = P_0$ and $P(t)$ is simply connected and bounded.
\end{cor}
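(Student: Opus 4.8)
The plan is to deduce the patch structure directly from the global well-posedness statement of Theorem \ref{thm:well} together with the transport structure of the first equation in \eqref{eq:Bs}. Since $\theta_0 = \bm{1}_{P_0}$ with $P_0$ bounded and simply connected, the hypotheses of Theorem \ref{thm:well} are met with $R_0$ any radius with $P_0 \subset B(0,R_0)$; hence there is a unique weak solution $\theta \in L^{\infty}(0,T;L^1\cap L^\infty)$ for every $T$, with associated velocity $u(t) = G * \theta(t)$ satisfying $\|\nabla u(t)\|_{C^\mu} \leq C_\mu \|\theta_0\|_Y$ uniformly in $t \leq T$. The essential point is that this Lipschitz (indeed $C^{1+\mu}$) bound on $u$ in space, uniform in time, is exactly what is needed to build a well-defined flow map.

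First I would construct the flow map $X(\cdot,t)$ by solving the ODE $\partial_t X(x,t) = u(X(x,t),t)$, $X(x,0) = x$. Because $u(t)$ is globally Lipschitz in $x$ with a time-integrable (in fact bounded on $[0,T]$) Lipschitz constant, the Picard–Lindelöf theorem gives a unique solution for each $x$, and standard ODE theory yields that $X(\cdot,t)$ is a homeomorphism of $\R^2$ with inverse $X(\cdot,t)^{-1} = X(\cdot,t;\text{backward})$. Moreover $\div u = 0$ implies (via the Liouville/Wroński identity for the Jacobian, $\partial_t \det \nabla X = (\div u)(X)\det \nabla X$) that $X(\cdot,t)$ is measure-preserving. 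Set $P(t) := X(P_0,t)$; as the continuous image of a bounded simply connected set under a homeomorphism, $P(t)$ is bounded and simply connected, and it has the same Lebesgue measure as $P_0$.

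Second I would verify that $\bm{1}_{P(t)}$ is indeed the weak solution. Since $\theta$ is transported by $u$ along the (now well-defined) flow, we have $\theta(X(x,t),t) = \theta_0(x)$ for a.e.\ $x$; equivalently $\theta(y,t) = \theta_0(X(\cdot,t)^{-1}(y)) = \bm{1}_{P_0}(X(\cdot,t)^{-1}(y)) = \bm{1}_{X(P_0,t)}(y) = \bm{1}_{P(t)}(y)$. To make this rigorous at the level of weak solutions, I would either (i) test the weak formulation of $\partial_t \theta + u\cdot\nabla\theta = 0$ against $\varphi(X(\cdot,t)^{-1})$ and use the change of variables to show $t \mapsto \int \theta(t)\varphi\,dy$ evolves as it would for the pushed-forward indicator, or (ii) invoke the DiPerna–Lions / Ambrosio theory of transport equations with Sobolev (here Lipschitz) velocity fields, which guarantees that the unique weak solution coincides with the Lagrangian one $\theta_0 \circ X^{-1}$. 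By the uniqueness in Theorem \ref{thm:well}, this Lagrangian solution \emph{is} the weak solution, so $\theta(t) = \bm{1}_{P(t)}$ as claimed.

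The main obstacle is the second step: reconciling the Eulerian notion of weak solution used in Theorem \ref{thm:well} with the Lagrangian transport picture, i.e.\ justifying $\theta(t) = \theta_0 \circ X(\cdot,t)^{-1}$ rigorously rather than formally. The $C^{1+\mu}$ spatial regularity of $u$ with uniform-in-time control is more than enough for this — it places us comfortably inside the classical (not merely DiPerna–Lions) regime — but the argument still requires care with the time regularity of $\theta$ and with passing between the weak formulation and the characteristics; everything else (the flow being a measure-preserving homeomorphism preserving boundedness and simple connectedness) is routine ODE theory given Lemma \ref{lem:key}.
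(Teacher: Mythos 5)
Your proposal is correct and follows essentially the same route as the paper: both use the uniqueness and the $C^{1+\mu}$-type velocity bounds from Theorem \ref{thm:well} to build the flow map $X(\cdot,t)$ as a measure-preserving homeomorphism and then identify the weak solution with the Lagrangian pushforward $\theta_0\circ X^{-1}(\cdot,t)=\bm{1}_{X(P_0,t)}$. The only difference is cosmetic: the paper encodes $P_0$ by a smooth level-set function $\varphi_0\in C_c^{\infty}$ transported by $u$ (a device reused later for the boundary-regularity analysis), whereas you push forward the indicator directly, and your flagged concern about reconciling the Eulerian weak solution with the characteristics is exactly the step the paper handles via uniqueness of the linear transport problem for the Lipschitz velocity $u$.
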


The corollary is due to an explicit construction: we represent the dynamics of the distribution
$\bm{1}_{P(t)}$ via the evolution of a smooth, compactly supported level-set function 
$\varphi(t) \in C_c^{\infty}$. Specifically, we suppose  $\varphi_0 \in C_c^{\infty}$ such
that 
\begin{equation}
\label{eq:patchphi0}
P_0 = \{ x  \mid \varphi_0(x) > 0 \}\quad \text{and} \quad \partial P_0
= \{x  \mid
\varphi_0(x) = 0 \},
\end{equation}
for $P_0$ in the claim. Then, we let $\varphi$ be transported passively by $u(t) =
G * \theta(t)$ where $\theta(t) \in Y$ is the unique weak solution with
initial data
$\bm{1}_{P_0}$ guaranteed to exist by Theorem \ref{thm:well}. Since
$u(t)$ is locally bounded and $\nabla u(t) \in C^{\mu}$ for all time, we have
$\varphi$ as the unique global solution the
Cauchy problem given by
\begin{equation}
\label{eq:cauchyphi}
\left(\frac{\partial}{\partial t} + u \cdot \nabla \right) \varphi = 0,
\end{equation}
with initial data $\varphi_0$. Further,  we have explicitly $\varphi(t)
= \varphi_0 \circ X^{-1}(\cdot,t)$ where $X$ is the well-defined flow map
given by $u$. It follows that the patch representation $\theta(t) = \bm{1}_{P(t)}$ holds
 for all time, where $P(t) =
X(P_0,t)$ is defined by
\begin{equation}
\label{eq:patchphi}
P(t) = \{ x  \mid \varphi(x,t) > 0 \}\quad \text{and} \quad \partial P(t)
= \{x  \mid
\varphi(x,t) = 0 \}.
\end{equation}

With global  well-posedness of patches solutions to
 system (\ref{eq:Bs}) established, we may now  examine the question of
global in time regularity for the evolving boundary $\partial P(t)$. We
show first the local propogation of H{\"o}lder continuity:

\begin{cor}
  \label{thm:reg1}
Suppose that $ \partial P_0$ is $\mu$-H{\"o}lder continuous at
$\tilde{x}$ with  $\mu \in (0,1)$. Then, $\partial P(t)$ is
$\mu$-H{\"o}lder continuous at $X(\tilde{x},t)$ at any given time.
\end{cor}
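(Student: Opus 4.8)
The plan is to transfer the H\"older modulus of continuity of $\partial P_0$ at $\tilde x$ to the evolved boundary at $X(\tilde x,t)$ by showing the flow map $X(\cdot,t)$ and its inverse are, locally in space and uniformly on $[0,T]$, $\log$-Lipschitz — or more than enough for the purpose, quasi-Lipschitz with an exponent that can be taken arbitrarily close to $1$. First I would recall that by Theorem \ref{thm:well}(3) we have $\|\nabla u(t)\|_{C^{\mu}} \le C_\mu \|\theta_0\|_Y$ for every $\mu \in (0,1)$, uniformly in $t \le T$; in particular $u(t)$ is uniformly Lipschitz on any fixed ball, with a Lipschitz constant $L$ depending only on the data and on the spatial scale (the latter controlled by the support bound $R(t)$ from part (2), which is finite for $t\le T$). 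Standard ODE theory (Gr\"onwall applied to $|X(a,t)-X(b,t)|$) then gives, for $a,b$ in a fixed bounded region,
\begin{equation*}
e^{-Lt}\,|a-b| \;\le\; |X(a,t)-X(b,t)| \;\le\; e^{Lt}\,|a-b|,
\end{equation*}
so $X(\cdot,t)$ and $X^{-1}(\cdot,t)$ are bi-Lipschitz on that region with constant $e^{LT}$.

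Next I would unwind the definition of $\mu$-H\"older continuity of the boundary at a point. By the parametrization convention in the introduction, $\partial P(t) = \{\z_t(\alpha) : \alpha \in \Sone\}$, and writing $\z_t = X(\cdot,t)\circ \z_0$ (the flow carries $\partial P_0$ to $\partial P(t)$, which is exactly the content of Corollary \ref{cor:patch} via the level-set function $\varphi$), pointwise $\mu$-H\"older continuity at $\tilde x = \z_0(\tilde\alpha)$ means
\begin{equation*}
|\z_0(\alpha)-\z_0(\tilde\alpha)| \le C\,|\alpha-\tilde\alpha|^{\mu}
\end{equation*}
for $\alpha$ near $\tilde\alpha$. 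Composing with the bi-Lipschitz map $X(\cdot,t)$ gives
\begin{equation*}
|\z_t(\alpha)-\z_t(\tilde\alpha)| = |X(\z_0(\alpha),t)-X(\z_0(\tilde\alpha),t)| \le e^{LT}\,|\z_0(\alpha)-\z_0(\tilde\alpha)| \le C\,e^{LT}\,|\alpha-\tilde\alpha|^{\mu},
\end{equation*}
which is exactly $\mu$-H\"older continuity of $\partial P(t)$ at $X(\tilde x,t) = \z_t(\tilde\alpha)$. (One should phrase this intrinsically, in terms of distances along the curve or of chordal distance, so as not to privilege a particular parametrization; since bi-Lipschitz maps distort all such notions by at most a fixed constant, the argument is unchanged.)

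The main obstacle is a bookkeeping one rather than a conceptual one: the Lipschitz constant of $u(t)$ is only controlled on bounded sets, and the relevant bounded set is the one swept out by the flow up to time $T$, so one must first invoke part (2) of Theorem \ref{thm:well} to pin down a ball $B(0,R(T))$ containing $\supp\theta(t)$ — and hence, after a short argument, containing $P(t)$ and the trajectories emanating from a neighborhood of $\tilde x$ — for all $t\le T$, and only then extract $L = \|\nabla u\|_{L^\infty(B(0,R(T)))} \le \|\nabla u\|_{C^\mu} \lesssim \|\theta_0\|_Y$ uniformly. A secondary point to be careful about is that H\"older continuity "at a point" is a one-sided (local, one-variable) statement, so the estimate only needs to be run for $\alpha$ in a fixed neighborhood of $\tilde\alpha$, and the constant $C$ is allowed to depend on $\tilde x$ and $T$; no global modulus of continuity for $\partial P_0$ is assumed or produced. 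Everything else is Gr\"onwall and composition of moduli of continuity.
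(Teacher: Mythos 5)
Your proposal is correct and follows essentially the same route as the paper: write $\z(t)=X(\cdot,t)\circ\z_0$, confine the trajectories to $B(0,R(t))$ via Theorem \ref{thm:well}, use the uniform bound on $\nabla u$ there to get a (locally) Lipschitz flow map, and transfer the pointwise $\mu$-H\"older modulus of $\z_0$ at $\tilde\alpha$ by composition. The bi-Lipschitz lower bound and the inverse flow are extra machinery the paper does not need, but they do no harm.
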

This result follows from a Lagrangian
construction of
$\partial P(t).$ Rather than the construction (\ref{eq:patchphi}), we consider
some initial parametrization $\z_0 :  \Sone \mapsto \partial P_0$. We observe that the flow map $X(a,t)$ for fluid
velocity $u = G * \bm{1}_{P}$ then gives us the
evolving patch boundary
 \begin{equation}
\label{eq:parpar}
\partial P(t) = \{X(\z_0(\alpha),t)  \mid  \alpha \in  \Sone \},
\end{equation}
such that
\begin{equation}
\label{eq:langz}
\z(t) = X(\cdot,t) \circ \z_0  
\end{equation}
parametrizes $\partial P(t)$ for any and all time. We note that for all times the image of $\z(t)$ must be in
\begin{equation}
  B(0,R(t)) = \{|x| \leq R(t) \}
\end{equation}
wherein  $u(t)$ is $C^{1+\mu}(B(0,R(t)))$, by Theorem \ref{thm:well}. Thus if $\z_0$ is $\mu$-H{\"o}lder continuous at $\tilde{\alpha}$, then the composition of maps giving $\z(t)$ is $\mu$-H{\"o}lder continuous
at $\tilde{\alpha}$. Arranging $\z_0(\tilde{\alpha}) = \tilde{x}$, we have established Corollary \ref{thm:reg1}.

Let us briefly demonstrate the difficulty in proving the persistence of higher
regularity in the Lagrangian construction by examining the contour dynamics equation (CDE) for system
(\ref{eq:Bs}), which we proceed to derive. 

By its construction (\ref{eq:langz}), $\z(t)$ 
obeys the evolution
equation
\begin{equation}
\frac{d\z}{dt} = u(\z(t),t).
\end{equation}
Suppose  $\z_0$ is
a parametrization by arc length, such that $\z(t)$ is also. 
Examining the expression for  velocity $u(t) = \nabla^{\perp}K *
\bm{1}_{P(t)}$, we apply Green's theorem to discover
\begin{equation}
\begin{aligned}
  u(x,t) &= \int_{P(t)} \nabla^{\perp} K(x - y)\, dy \\
   &= -\int_{0}^{2\pi} K(x - \z(\sigma,t)) \frac{\partial \z}{\partial
\alpha}(\sigma,t) \, d\sigma,
\end{aligned}
\end{equation}
where $\z(t)$ is assumed to be clockwise oriented. Suppressing the time
argument, we now have the contour dynamics equation
\begin{equation}
  \tag{CDE}
\label{eq:CDE}
\frac{\partial \z}{\partial t}(\alpha) = -\int_{0}^{2\pi} K(\z(\alpha) - \z(\sigma)) \frac{\partial \z}{\partial
\alpha}(\sigma) \, d\sigma.
\end{equation}
So, we differentiate the above
equation in $\alpha$ to find the evolution equation
\begin{equation}
\label{eq:langdz}
\frac{\partial^2 \z}{\partial t \partial \alpha}(\alpha) = -\int_{0}^{2\pi} \left
\lbrack \nabla K(\z(\alpha) - \z(\sigma)) \cdot
\frac{\partial\z}{\partial \alpha}(\alpha) \right\rbrack \frac{\partial \z}{\partial
\alpha}(\sigma) \, d\sigma,
\end{equation}
where explicitly
\begin{equation}
\nabla K(z) = \frac{1}{8\pi} \left\lbrack 
\begin{pmatrix} 
  -2 \log|z| \\
  0
\end{pmatrix} -\frac{1}{|z|^2}
\begin{pmatrix} 
z_{1}^2 - z_{2}^2 \\
2 z_1 z_2
\end{pmatrix} 
\right\rbrack .
\end{equation}
From here, we see that proving the global in time well-posedness of the \ref{eq:CDE} by itself is
difficult, even more so is  showing the global in time persistence of $C^{\mu}$
regularity for  $\partial_{\alpha} \z(t)$ evolving via (\ref{eq:langdz}).
This challenge is typical of contour dynamics models.

\begin{theorem}
  \label{thm:reg}
Let $k \in \{0,1,2\}$ and suppose that $ \partial P_0 \in
  C^{k+\mu}$ with  $\mu \in (0,1)$. Then, $\partial P(t) \in C^{k + \mu}$ 
  for all time.
\end{theorem}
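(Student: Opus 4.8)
The plan is to handle the three cases in increasing order of difficulty, always working with the Lagrangian parametrization $\z(t)=X(\cdot,t)\circ\z_0$ from \eqref{eq:langz}, and using throughout that $\|\nabla u(t)\|_{C^{\mu}}\le C_\mu\|\theta_0\|_Y$ for all $t$ by Theorem \ref{thm:well}(3) (so in particular $\|\nabla u(t)\|_{L^\infty}$ is bounded uniformly in time), together with the support radius $R(t)<\infty$ from Theorem \ref{thm:well}(2). The case $k=0$ is Corollary \ref{thm:reg1}.

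For $k=1$ I would first upgrade the flow map. Since $\partial_t X=u(X,t)$ with $u(t)$ locally $C^{1+\mu}$ and $\|\nabla u(t)\|_{L^\infty}$ uniformly bounded, the matrix $A(x,t)=\nabla X(x,t)$ solves $\partial_t A=(\nabla u)(X)A$. Writing the equation for $A(x,t)-A(y,t)$ and splitting the right-hand side as $(\nabla u)(X(x))[A(x)-A(y)]+[(\nabla u)(X(x))-(\nabla u)(X(y))]A(y)$, bounding the second bracket by $\|\nabla u\|_{C^\mu}(\mathrm{Lip}\,X)^\mu|x-y|^\mu$, and applying Gronwall gives $\nabla X(\cdot,t)\in C^\mu_{\mathrm{loc}}$ uniformly on finite time intervals, i.e. $X(\cdot,t)\in C^{1+\mu}_{\mathrm{loc}}$. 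Then $\partial_\alpha\z(t)=\nabla X(\z_0,t)\,\partial_\alpha\z_0$ is a product of two $C^\mu(\Sone)$ functions (using that $\z_0$ is Lipschitz and $\partial_\alpha\z_0\in C^\mu$), so $\z(t)\in C^{1+\mu}(\Sone)$; non-degeneracy $|\partial_\alpha\z(t)|\ge c(t)\inf|\partial_\alpha\z_0|>0$ follows from $\det\nabla X\equiv1$ (since $\div u=0$), which makes $\|\nabla X(\cdot,t)^{-1}\|$ obey the same bound as $\|\nabla X(\cdot,t)\|$. (Equivalently one could run this argument on the \eqref{eq:langdz} form directly.)

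For $k=2$ the flow map is no longer regular enough, since $\nabla^2u$ is only $BMO$-type for $\theta\in Y$, so I would work directly with the Lagrangian equation. Differentiating $\partial_t\z=u(\z,t)$ twice in $\alpha$ (equivalently, differentiating \eqref{eq:langdz} once more) gives
\begin{equation*}
\partial_t\partial_\alpha^2\z(\alpha)=\nabla u(\z(\alpha))\,\partial_\alpha^2\z(\alpha)+\nabla^2 u(\z(\alpha))\bigl(\partial_\alpha\z(\alpha),\partial_\alpha\z(\alpha)\bigr).
\end{equation*}
The first term is linear in $\partial_\alpha^2\z$ with $C^\mu$ coefficient controlled by $\|\nabla u(t)\|_{C^\mu}$ and $\|\z(t)\|_{C^{1+\mu}}$ (from the $k=1$ step), hence harmless in a Gronwall estimate for $\|\partial_\alpha^2\z(t)\|_{C^\mu}$. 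The entire problem is to show that the second term---the Hessian of $u$ probed in two directions tangent to $\partial P(t)$, evaluated on the boundary---is $C^\mu$ in $\alpha$, with norm bounded by $C\bigl(R(t),\inf|\partial_\alpha\z(t)|,\|\z(t)\|_{C^{1+\mu}},\|\theta_0\|_Y\bigr)\bigl(1+\|\partial_\alpha^2\z(t)\|_{C^\mu}\bigr)$. For this I would start from the pointwise formula \eqref{eq:gradUkernel} and convert the area integral to a contour integral via Green's theorem, exactly as in the derivation of the \eqref{eq:CDE}, so that $\nabla^2u(\z(\alpha))(\partial_\alpha\z,\partial_\alpha\z)$ becomes a principal-value integral over $\Sone$ in $\sigma$ with kernel built from $\nabla^2K(\z(\alpha)-\z(\sigma))$ (homogeneous of degree $-1$, odd) contracted against $\partial_\alpha\z(\alpha)$, $\partial_\alpha\z(\alpha)$, $\partial_\alpha\z(\sigma)$, plus a milder piece. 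Estimating its Hölder norm in $\alpha$ is the singular-integral computation: split $\Sone$ into a near-diagonal region and a far region; on the far region the kernel is controlled using that $X(\cdot,t)$ is a bi-Lipschitz homeomorphism, so $\partial P(t)$ remains a simple curve and $|\z(\alpha)-\z(\sigma)|$ is bounded below; on the near-diagonal region one Taylor-expands $\z$ to second order (this is where $\partial P(t)\in C^{2+\mu}$, i.e. $\partial_\alpha^2\z\in C^\mu$, enters, and it enters linearly), uses the odd symmetry of $\nabla^2K$ to realize the principal-value cancellation, and reads off the $|\alpha-\alpha'|^\mu$ modulus. Feeding the resulting bound into the evolution equation and running Gronwall in $t$ (or an Osgood argument, should the estimate carry a borderline logarithmic loss), together with the already-controlled $\|\z(t)\|_{C^{1+\mu}}$, $\inf|\partial_\alpha\z(t)|$ and $R(t)$, yields $\|\partial_\alpha^2\z(t)\|_{C^\mu}<\infty$ for every finite $t$, hence $\partial P(t)\in C^{2+\mu}$.

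The main obstacle is the last step: extracting a $C^\mu$-in-$\alpha$ modulus for the tangentially contracted Hessian $\nabla^2u(\z(\alpha))(\partial_\alpha\z,\partial_\alpha\z)$ from a genuinely singular principal-value integral along the curve, simultaneously exploiting the cancellation encoded in the $\mathrm{pv}$ and the tensor $\bm{\mathrm{E}}$ and the $C^{2+\mu}$ regularity of the boundary, and verifying that the dependence on the top-order norm $\|\partial_\alpha^2\z\|_{C^\mu}$ is at worst linear (up to logarithms), so that the Gronwall/Osgood estimate closes and produces a bound valid for all finite times rather than merely short times.
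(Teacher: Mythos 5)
Your $k=0$ and $k=1$ cases are sound, and the $k=1$ argument is a genuinely different route from the paper's: you propagate $C^{1+\mu}$ regularity of the flow map $X(\cdot,t)$ by a Gr\"onwall estimate on $\nabla X$, which is legitimate here precisely because Lemma \ref{lem:key} gives $\|\nabla u(t)\|_{C^{\mu}}\le C_{\mu}\|\theta_0\|_{Y}$ uniformly in time (unlike the Euler vortex-patch problem, where no such a priori bound exists and the whole difficulty is to bootstrap it). The paper instead works in Eulerian variables with $\W=\nabla^{\perp}\varphi$, and uses the commutator structure of Lemma \ref{lem:com1} only to improve your doubly exponential bound on the H\"older seminorm to a single exponential; the Remark following Proposition \ref{prop:c1} explicitly acknowledges the simpler argument you give. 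So for $k\le 1$ your proposal is acceptable, trading sharpness of the time dependence for simplicity.

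For $k=2$, however, there is a genuine gap: your entire proof reduces to showing that $\nabla^2u(\z(\alpha))(\partial_\alpha\z,\partial_\alpha\z)$ is $C^{\mu}$ in $\alpha$ with at most linear dependence on $\|\partial_\alpha^2\z\|_{C^{\mu}}$, and you present that estimate only as a sketch while yourself flagging it as the main obstacle --- but that estimate \emph{is} the theorem, and no mechanism for it is supplied. Note also that $\nabla\nabla u$ is discontinuous across $\partial P$ (both the $\theta(x)\bm{\mathrm{E}}$ term and the principal-value integral in \eqref{eq:gradUkernel} jump there), so even writing your ODE for $\partial_\alpha^2\z$ with a pointwise Hessian on the boundary requires justification via one-sided traces or the contour formula. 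The paper closes the estimate by a different mechanism that avoids ever estimating a H\"older seminorm of a top-order singular integral: because $\W$ is divergence-free and tangent to $\partial P$, the contracted Hessian has the representation $\nabla\nabla u\cdot\W=\frac{1}{4\pi}\,\mathrm{pv}\int_P \bm{\mathrm{H}}(x-y)|x-y|^{-2}\,(\W(x)-\W(y))\,dy$, so Corollary \ref{cor:com2} bounds $|\nabla\nabla u\cdot\W|_{\mu}$ by $C_3\|\nabla\nabla u\|_{L^{\infty}}|\W|_{\mu}$ --- a \emph{lower-order} quantity already controlled at the $C^{1+\mu}$ stage --- and $\|\nabla\nabla u(t)\|_{L^{\infty}}\le C_4(1+|t|)$ is then obtained in Proposition \ref{prop:DDu} from the Geometric Lemma together with the reflection symmetry $\bm{\mathrm{H}}(-z)=\bm{\mathrm{H}}(z)$ (the semicircle integral of $\bm{\mathrm{H}}$ vanishes). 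With the forcing in \eqref{eq:dynDW} controlled by previously bounded quantities, Gr\"onwall closes in Proposition \ref{prop:c2}. In your scheme the unknown top-order norm sits inside the very term you must estimate, and you give no argument that the dependence is linear, or even that the quantity is finite; a direct contour-dynamics proof in the spirit you describe may well be possible (it is the flavor of \cite{gancedo2017global}), but as written the $k=2$ case is not proved.
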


The higher regularity results follow from an analysis of the Eulerian
construction of $\partial P$ in Corollary \ref{cor:patch}. Recalling the
level set function $\varphi$,  we note the direction of the vector field $\W = \nabla^{\perp} \varphi$ is
tangent to $\partial P = \partial P(t)$ in general. We then  have the parametrization
\begin{equation}
\label{eq:parah}
\z : \Sone \mapsto \partial P \quad \text{with} \quad
\frac{\partial\z}{\partial \alpha} = \W\circ\z,
\end{equation}
given $\W$ is bounded and non-vanishing everywhere on $\partial P$. Thus to guarantee $\partial
P \in  C^{1+ \mu}$ for $\mu \in (0,1)$, the relevant quantities to
control are $\|\W(t)\|_{L^{\infty}}$ and  
  \begin{align}
    \lvert \W \rvert_{\inf} &:= \inf\limits_{x \in \partial P } \lvert\W(x)\rvert = \inf_{x,
    \varphi(x) = 0} \lvert \W(x) \rvert, \label{eq:quant1} \\
      \lvert\W \rvert_{\mu} &:= \sup_{x \neq x'}  \frac{\lvert \W(x) - \W(x') \rvert}{\lvert x -
    x'\rvert^{\mu}}, \quad \mu \in (0,1). \label{eq:quant2}
  \end{align}

Differentiating (\ref{eq:cauchyphi}), the evolution of $\W(t)$ where $\W_0 =
\nabla^{\perp}\varphi_0$ obeys 
\begin{equation}
\label{eq:weq}
\left( \frac{\partial}{\partial t} + u \cdot \nabla \right) \W = \nabla
  u \,  \W.
\end{equation}
As transport preserves $\| \theta(t)\|_{L^{p}}$,
the estimate of Lemma \ref{lem:key} gives immediately $\|\nabla u(t)\|_{L^{\infty}}$ is
bounded uniformly in time by a positive constant $C_L$ depending only on the initial
data. This is already sufficient to achieve $\partial P \in C^{1 + \mu}$ with an
$\exp(C_L \exp(C_L|t|))$ bound for  $|\W(t)|_{\mu}$ from  Gr\"{o}nwall-type
inequalities, see \cite[Proposition 3]{Bertozzi1993GlobalRF}. 

With the
geometrical insights of \cite{Bertozzi1993GlobalRF}, we may improve this
initial bound for $|\W(t)|_{\mu}$ to an $\exp(C_L |t|)$ bound.
Further, these geometrical methods are used to prove persistence of
$C^{2+\mu}$-regularity. Differentiating
(\ref{eq:weq}) yields an evolution equation,
\begin{equation}
\label{eq:dynDW}
\left(\frac{\partial}{\partial t} + u \cdot \nabla \right)\nabla  \W  
= \left\lbrack \nabla \W,  \nabla u \right\rbrack + \nabla \nabla u
\cdot \W ,
\end{equation}
where  we have the tensor product commutator $\lbrack A,B \rbrack =
A\cdot B - B\cdot A$. The term to control is the product $\nabla \nabla
u \cdot \W$, therein we observe the expression of $\nabla\nabla u$
for the patch $\theta = \bm{1}_{P}$ is
\begin{equation}
\nabla  \nabla  u(x)  =  \bm{1}_{P}(x) \bm{\mathrm{E}} + \frac{1}{4 \pi} \mathrm{pv}
\int_{\R^2} \bm{1}_{P}(x - z ) \nabla \nabla G(z)
 \, dz.
\end{equation}
Clearly estimating $|\nabla\nabla u|_{\mu}$ from this expression is
difficult, but we 
use the fact that the vector field $\W \in C^{\mu}(\R^2,\R^2)$ is divergence-free and
tangent to $\partial P$ to reduce the problem to estimating $\| \nabla\nabla
u\|_{L^{\infty}}$ (Corollary \ref{cor:com2}). 

Beyond satisfying the cancellation
property, the CZ kernel $\nabla \nabla G(z)$ has reflection symmetry
such that we have
$\bm{\mathrm{H}}:\Sone \to \R^{2 \times 2 \times 2}$ and
\begin{equation}
\nabla \nabla G(z) = \frac{\bm{\mathrm{H}}(z)}{|z|^2} \quad \text{with}
\quad \bm{\mathrm{H}}(-z) = \bm{\mathrm{H}}(z).
\end{equation}
Because small neighborhoods of $\partial P$ look like half-circles, the
reflection symmetry allows us to achieve a sufficient $L^{\infty}$ estimate on
$\nabla \nabla u(t)$  in Proposition \ref{prop:DDu}. Ultimately, we show $|\nabla\W(t)|_{\mu}$ may
grow like $|t|\exp(C_{3}|t|)$, where $C_3$ depends only on  $\mu$ and
the initial data, and the global in time persistence of
$C^{2+\mu}$ regularity for $\partial P(t)$ is proved. The complete
details are given in Section \ref{sec:reg}.

While the simulations of \cite{buttke1989observation} and
\cite{dritschel1990does} were initialized with two circular vortex
patches of positive sign, a single circular
patch of density is already unsteady in system \eqref{eq:Bs}. Observe
that for the initial data $\theta_0 = \bm{1}_{B(0,1)}$, the initial vorticity
$\omega_0$ evaluated on the contour $\z_0(\alpha) = (\sin \alpha, \cos
\alpha)$,
\begin{equation}
\label{eq:}
\omega_{0} \circ \z_0 = \int_0^{2\pi} \int_0^{1} 
H(2r \sin(\alpha + \theta) - r^2)  \cos \theta \, dr \, d \theta
\end{equation}
where $H$ is the Heaviside function, is not symmetric in $\alpha$. Thus
the initially circular patch will not remain circular. More generally, for a nonnegative and nontrivial $\theta_0$,
the center of mass 
\begin{equation}
\label{eq:}
q = \frac{1}{M}\int_{\R^2}  \theta x\, dx, 
\end{equation}
where $M = \| \theta_0\|_{L^{1}}$, drifts
upwards with vertical component $q_2$ increasing as 
\begin{equation}
\label{eq:}
\frac{dq_2}{dt} = -\frac{1}{M}\int_{\R^2} 
\div(\theta u) x_2  \, dx = \frac{1}{M}\int_{\R^2}  \theta u_{2}  \, dx > 0
\end{equation}
because $u_2 = G_2 * \theta$ is
given by a positive operator on $\theta$. If the initial data has
a horizontal reflection symmetry, the horizontal component $q_1$ is
stationary.

Let us consider the system (\ref{eq:Bs}) on $\T^2$ such that the spatial domain is
 now computationally realizable. Observing that the
problem is Galilean invariant, we choose a reference frame so that
 the mean velocity $\int_{\T^2} u \, dx = \widehat{u}(0)$ is zero.
Then from integrating the momentum equation, we see that $\widehat{u}(0)$
is indeed constant in time, and that we must have $\int_{\T^2}
\theta\, dx = 0$ for compatibility. 
Clearly, the arguments giving the main results, in particular Lemmas
\ref{lem:pointu} and \ref{lem:key}, hold
readily on the domain $\T^2$ wherein $L^1 \cap L^{\infty} \equiv
L^{\infty}$. 
\begin{cor}
  \label{cor:welltor}
  Let $\theta_0 \in L^{\infty}(\T^2)$ such that
$$\int_{\T^2} \theta_0 \, dx = 0. $$
Then, Theorem \ref{thm:well} holds for the system \eqref{eq:Bs} on $\T^2$.
\end{cor}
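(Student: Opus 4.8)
The plan is to transfer the whole-space analysis to the flat torus $\T^2$ essentially verbatim, using the observation already noted in the excerpt that on $\T^2$ one has $L^1 \cap L^\infty \equiv L^\infty$, so the Yudovich-type hypothesis reduces to $\theta_0 \in L^\infty(\T^2)$, and the compact-support hypothesis is automatic since $\T^2$ is bounded. The zero-mean condition $\int_{\T^2}\theta_0\,dx = 0$ is exactly the solvability condition for the elliptic problem $-\Delta\omega = \partial_1\theta$ on the torus, so the streamfunction/velocity representation $u = G_{\T^2} * \theta$ makes sense with a periodic Green's kernel $G_{\T^2}$. First I would record that $G_{\T^2}(z) = G(z) + (\text{smooth correction})$: writing $G_{\T^2}$ via the periodization of the whole-space biharmonic kernel (or via Fourier series on the zero-mean subspace), the singular part near $z = 0$ coincides with the kernel $G$ from \eqref{K}, and the remainder is a smooth, bounded, periodic function. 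This is the only genuinely new computation and it is standard.

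Granting that decomposition, the three conclusions of Theorem \ref{thm:well} follow by re-running the earlier arguments. The $L^p$ bound $\|\theta(t)\|_{L^p} \le \|\theta_0\|_{L^p}$ is purely a transport statement and is insensitive to the domain, since $u$ is divergence-free and the flow map is measure-preserving. The support statement becomes trivial on $\T^2$ — one simply takes $R(t)$ to be the (fixed) diameter of the torus, so the differential inequality holds vacuously; alternatively, if one prefers to keep track of a genuinely evolving patch, the relevant bound is just that $u(t)$ is globally bounded, which is immediate since $G_{\T^2}$ has only a mild (logarithmic) singularity and $\T^2$ has finite measure, giving $\|u(t)\|_{L^\infty} \le C\|\theta_0\|_{L^\infty}$ without any logarithmic growth in a support radius. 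The $C^\mu$ bound on $\nabla u$ is Lemma \ref{lem:key} applied to the singular part of $G_{\T^2}$ — the operator $\nabla\nabla G_{\T^2}$ is again Calderón–Zygmund in each entry (same principal part, smooth remainder contributing a bounded operator), and the $\mathrm{pv}$ identity \eqref{eq:gradUkernel} holds with the same constant tensor $\bm{\mathrm{E}}$ since the correction term is smooth and hence contributes no $\mathrm{pv}$/local term. Then the interpolation-type inequality $\|\nabla u\|_{C^\mu} \lesssim \|\theta\|_{L^\infty}$ goes through unchanged, yielding $\|\nabla u(t)\|_{C^\mu} \le C_\mu\|\theta_0\|_{L^\infty}$.

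For global well-posedness itself — existence and uniqueness of a weak solution in $L^\infty(0,T; L^\infty(\T^2))$ — I would run the same Yudovich-type scheme as in Section \ref{sec:well}: the log-Lipschitz (here genuinely Lipschitz, by Lemma \ref{lem:key}) regularity of $u$ in terms of $\theta$ gives a well-defined flow map $X(\cdot,t)$, one solves the transport equation along trajectories, and uniqueness follows from an Osgood/Grönwall argument on the distance between two flow maps driven by the contraction estimate for $u$ in terms of $\theta$. Since $\nabla u$ is actually $C^\mu$, not merely log-Lipschitz, the uniqueness argument is if anything simpler than the classical Yudovich case. The one bookkeeping point is to verify that the zero-mean condition is propagated in time: $\int_{\T^2}\theta(t)\,dx = \int_{\T^2}\theta_0\,dx = 0$ because $u$ is divergence-free, so the representation $u(t) = G_{\T^2}*\theta(t)$ remains valid for all $t$.

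The main obstacle, such as it is, is the explicit handling of the periodic biharmonic Green's function $G_{\T^2}$ and the verification that its second gradient defines a Calderón–Zygmund operator on $\T^2$; everything else is a line-by-line port of the whole-space proof with the observation that boundedness of the domain removes the only place (the support-growth estimate, Lemma \ref{lem:pointu}) where the whole-space argument needed care. I would present this as a short remark-style proof: state the kernel decomposition, note that the singular parts agree, invoke Lemmas \ref{lem:pointu} and \ref{lem:key} and the Section \ref{sec:well} well-posedness scheme mutatis mutandis, and check propagation of the zero-mean condition.
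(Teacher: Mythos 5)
Your proposal is correct and takes essentially the same route as the paper, which disposes of this corollary in remark fashion: the whole-space arguments (Lemmas \ref{lem:pointu} and \ref{lem:key} and the Section \ref{sec:well} iteration/uniqueness scheme) transfer to $\T^2$, where $L^1\cap L^{\infty}\equiv L^{\infty}$, the support bookkeeping is vacuous, and the periodic kernel differs from $G$ only by a smooth correction, so the Calder\'on--Zygmund structure and hence the three estimates are unchanged. One small imprecision: the zero-mean condition is not the solvability condition for $-\Delta\omega=\partial_1\theta$ (whose right-hand side is automatically mean-zero on $\T^2$); as the paper notes, it is the compatibility condition obtained by integrating the momentum equation $-\Delta u+\nabla\Pi=\theta e_2$ over $\T^2$, which also shows $\widehat{u}(0)$ is constant in time.
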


Our definition of patches must be modified to be mean zero in
density. On the flat torus, the compatible patch data is
\begin{equation}
\label{eq:torinit}
  \theta_0 = \bm{1}_{P_0} - \mathrm{area}(P_0) 
\end{equation}
  where $P_0 \subset  \T^2$ is simply connected. For such initial data, the unique weak solution $\theta$ to
  system (\ref{eq:Bs}) on $\T^2$ then has the form 
\begin{equation}
\label{eq:thetator}
\theta(t) = \bm{1}_{P(t)} - \mathrm{area}(P_0),
\end{equation}
where $P(0) = P_0$ and $P(t)$ is simply connected. 
On this numerically
tractable domain, we here implement a level-set
method that approximates the evolution of the boundary $\partial P(t)$
which we know is well-defined for all time.
\begin{cor}
  \label{cor:patchtor}
  For the system  \eqref{eq:Bs} on $\T^2$, if the initial data $\theta_0$ has the form
  \eqref{eq:torinit}, then the unique weak solution $\theta$ in Corollary
  \ref{cor:welltor} has the form \eqref{eq:thetator}.
\end{cor}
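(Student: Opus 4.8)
The plan is to repeat the Eulerian level-set construction used for Corollary \ref{cor:patch}, making only the two bookkeeping changes appropriate to the torus: the whole-space Biot--Savart kernel $G$ is replaced by its $\T^2$ analogue (which changes nothing in what follows, since everything we use about the recovered velocity is furnished by Corollary \ref{cor:welltor}), and the patch ansatz now carries the additive constant $-\mathrm{area}(P_0)$ from \eqref{eq:torinit}, which I will show is inert under the dynamics.

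First I would fix $\varphi_0 \in C^\infty(\T^2)$ with $P_0 = \{\varphi_0 > 0\}$ and $\partial P_0 = \{\varphi_0 = 0\}$, as in \eqref{eq:patchphi0}; the existence of such a $\varphi_0$ for simply connected $P_0$ is the same elementary fact invoked in the whole-space corollary, the only change being that one now works in $C^\infty(\T^2)$ rather than $C^\infty_c$, since the torus admits no nontrivial compactly supported functions. By Corollary \ref{cor:welltor}, for arbitrary $T$ there is a unique weak solution $\theta \in L^\infty(0,T;L^\infty(\T^2))$ with $\int_{\T^2}\theta\,dx = 0$, and the associated velocity $u(t)$ --- obtained from $\theta(t)$ through the same streamfunction/vorticity chain $-\Delta\omega = \partial_1\theta$, $\Delta\psi = \omega$, $u = \nabla^\perp\psi$, now on $\T^2$ --- is bounded with $\nabla u(t) \in C^\mu$, uniformly for $t \le T$. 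Since $u$ is divergence-free and Lipschitz in space uniformly in time, its flow $X(\cdot,t):\T^2\to\T^2$ is a well-defined, volume-preserving homeomorphism for each $t$, and $\varphi(t) := \varphi_0\circ X^{-1}(\cdot,t)$ is the unique global solution of the Cauchy problem \eqref{eq:cauchyphi} with datum $\varphi_0$.

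Next I would observe that adding a constant $c$ to the density leaves $u$ unchanged, because $\partial_1(\theta + c) = \partial_1\theta$ leaves $\omega$, then $\psi$, then $u = \nabla^\perp\psi$ unchanged; hence $\rho(t) := \theta(t) + \mathrm{area}(P_0)$ solves the linear transport equation with this same velocity $u$ and datum $\bm{1}_{P_0}$. On the other hand, setting $P(t) := \{\varphi(\cdot,t) > 0\} = X(P_0,t)$, the indicator $\bm{1}_{P(t)} = \bm{1}_{P_0}\circ X^{-1}(\cdot,t)$ solves the same linear problem, exactly as in Corollary \ref{cor:patch}, the level sets of $\varphi$ being transported by $X$. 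By uniqueness for that problem --- $u$ being Lipschitz in space --- $\rho(t) = \bm{1}_{P(t)}$, i.e. $\theta(t) = \bm{1}_{P(t)} - \mathrm{area}(P_0)$, which is \eqref{eq:thetator}. Finally, $X(\cdot,t)$ is a homeomorphism of $\T^2$, so $P(t)$ is simply connected; and it is volume-preserving, so $\mathrm{area}(P(t)) = \mathrm{area}(P_0)$, which both confirms $\int_{\T^2}\theta(t)\,dx = 0$ --- so the shifted-indicator ansatz stays compatible with the mean-zero constraint at every time --- and completes the identification.

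I do not expect a substantial obstacle: the argument is essentially a transcription of Corollary \ref{cor:patch} once Corollary \ref{cor:welltor} is granted. The only two points deserving a sentence of care are the ones flagged above: working with $\varphi_0 \in C^\infty(\T^2)$ in place of $C^\infty_c$, and verifying that the constant $-\mathrm{area}(P_0)$ in \eqref{eq:torinit} is annihilated by the material derivative and invisible to the velocity-recovery operator, with volume preservation of the flow being precisely what keeps the ansatz mean-zero for all time.
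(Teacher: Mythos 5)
Your proposal is correct and follows essentially the same route the paper takes: it is the level-set/flow-map construction of Corollary \ref{cor:patch}, transplanted to $\T^2$ via Corollary \ref{cor:welltor}, with the two observations the paper leaves implicit (the additive constant $-\mathrm{area}(P_0)$ is annihilated by the material derivative and invisible to the velocity recovery since only $\partial_1\theta$ enters, and uniqueness for the linear transport problem with the Lipschitz velocity identifies $\theta(t)+\mathrm{area}(P_0)$ with $\bm{1}_{P(t)}$). No gaps; the mean-zero and simple-connectedness bookkeeping via the volume-preserving homeomorphism $X(\cdot,t)$ matches the paper's intent.
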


\begin{figure*}
  \begin{center}
    \setlength\tabcolsep{-20pt}
    \setlength{\fboxrule}{0pt}
        \makebox[0pt]{
    \begin{tabular}{@{}ccccc@{}}
      $\partial P(t = 0)$ & $t = 25$ & $t = 50$ & $t = 75$ & $t = 100$\\  
      \raisebox{-.45\height}{\fbox{\includegraphics[scale=.55]
          {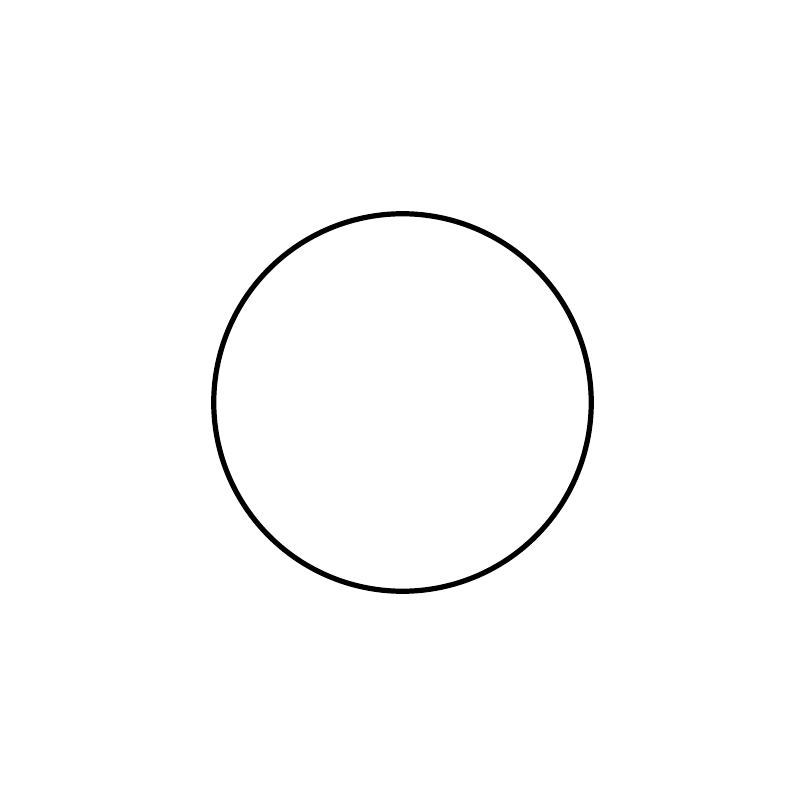}}} &
      \raisebox{-.45\height}{\fbox{\includegraphics[scale=.55]
          {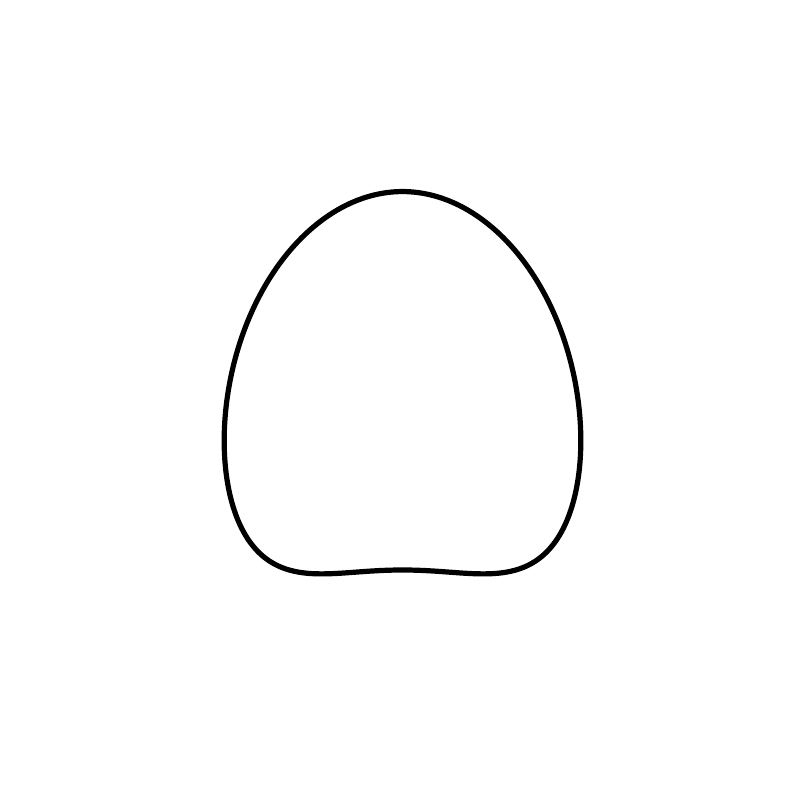}}} &
      \raisebox{-.45\height}{\fbox{\includegraphics[scale=.55]
          {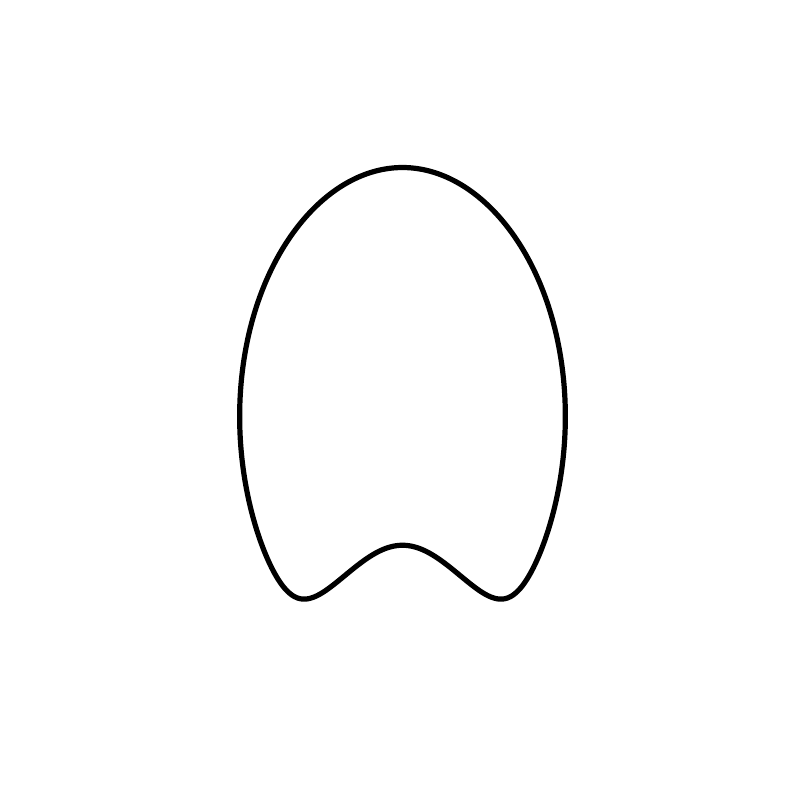}}} &
      \raisebox{-.45\height}{\fbox{\includegraphics[scale=.55]
          {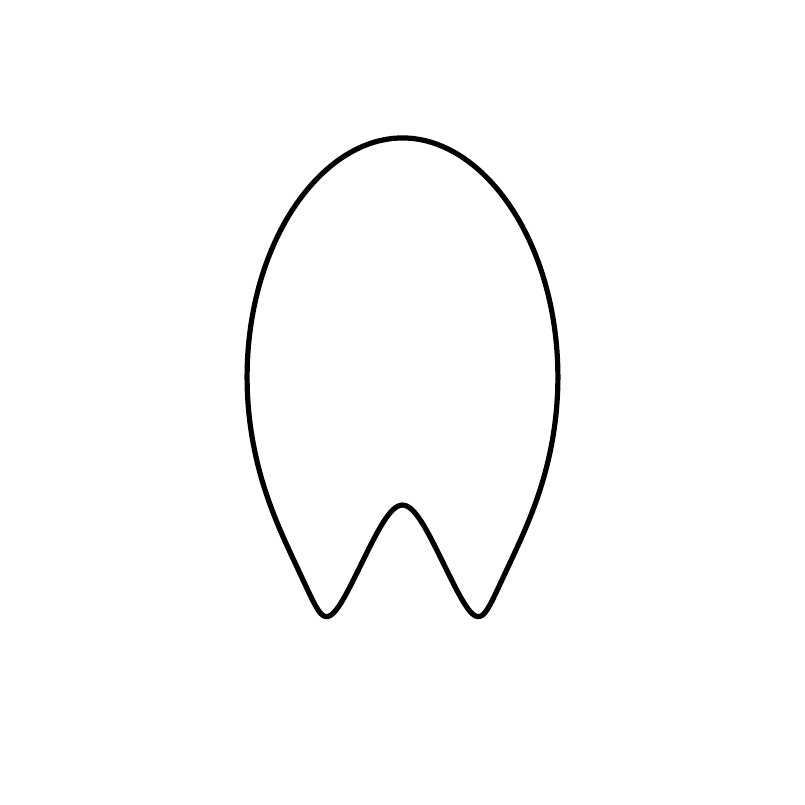}}} &
      \raisebox{-.45\height}{\fbox{\includegraphics[scale=.55]
          {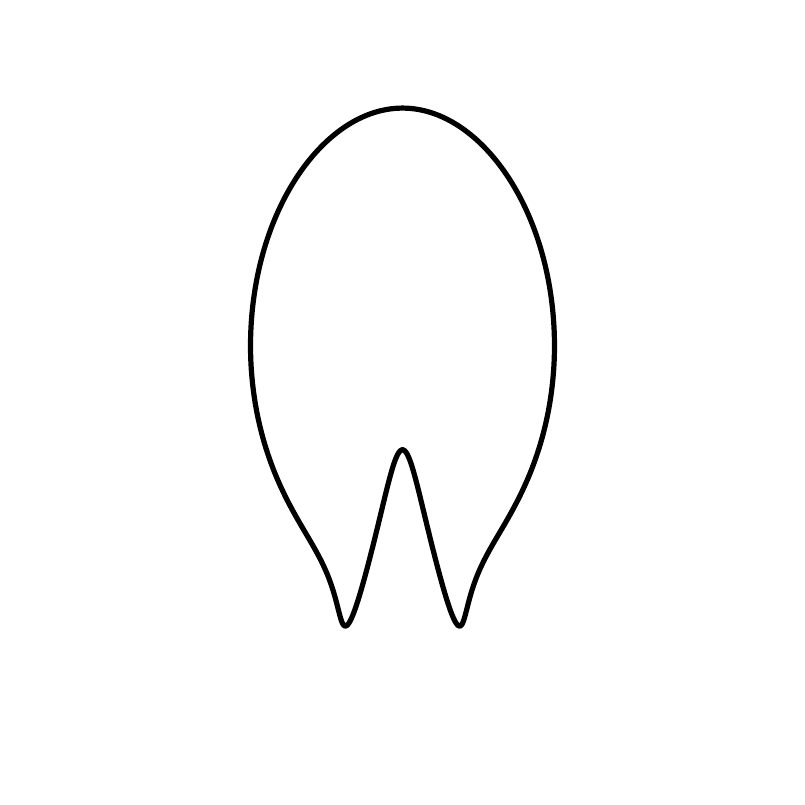}}} 
      \end{tabular}}
      \caption{Snapshots of $\partial P$ from a numerical patch solution ($N =
        1024$) to
      system (\ref{eq:Bs}) on $\T^2$ for
       $\partial P_0 = 
      \Sone(\frac{1}{2} )$. 
      For presentation, the axes are omitted and the  vertical positions
      of curve $\partial P(t)$ have been  aligned
      between snapshots.
      Simulation details are given
      in Section \ref{sec:fig}.
  }
    \label{fig:circ}
  \end{center}
\end{figure*}

The numerical method evolves a discrete level-set function $\varphi_{ij}(t)$ on a fixed $N \times  N$
uniform grid,
discretizing  $\T^2$, according to \eqref{eq:cauchyphi}. The scheme for solving (\ref{eq:cauchyphi}) is described in
\cite[Part II]{sethian1996level} and is first-order in
space with respect to the uniform grid spacing $h = 1 /N.$ The
discrete fluid velocity $u_{ij}$  is obtained from (\ref{eq:u})
using a spectral collocation method, and time integration is done with
Heun's method, which is second-order and strong-stability preserving.
Overall, the numerical solver for (\ref{eq:Bs}) is second-order in time,
first-order in space.

Using this algorithm, we compute the dynamics of the temperature patch
which is initially circular. The initial curve  $\partial
P_0$ is fixed as the embedded circle of radius one-half $ \Sone(\frac{1}{2})$. The
result of the simulation is presented in Figure \ref{fig:circ}. We observe that the curve  $\partial P(t)$
forms corner-like structures in its evolution; however, our persistence
of regularity results, Corollary \ref{thm:reg1} and Thereom
\ref{thm:reg}, hold for $\partial P(t)$ in this setting as well. Thus,
if in light of  Figure \ref{fig:circ} one asks
whether there develops a curvature singularity in finite time, we provide proof that the curvature
remains bounded for all time.

\section{Proof of Lemmas \ref{lem:pointu} and \ref{lem:key}}\label{sec:key}
The expressions with  singular integrals allow us to sufficiently control $u$ and $\omega$ via the
bounds that follow.  

\begin{proof}[Proof of Lemma \ref{lem:pointu}]
  We write explicitly,
  \begin{equation}
  \label{eq:pK}
  G(z) = \nabla^{\perp} K(z) = \frac{1}{8\pi} \begin{pmatrix} 2 \hat{z}_{1} \hat{z}_{2} \\
     1 - 2 \log |z| -  2\hat{z}^2_{1} \end{pmatrix},
  \end{equation}
  where $\hat{z}_{j} = z_{j}/|z|$, and  observe
  \begin{equation}
  \label{eq:pKabs}
  | G(x - y) | \leq C (1 + |\log |x - y | |).
  \end{equation}
  It follows immediately
  \begin{equation}
  \label{eq:pK1}
  \int\limits_{|x - y| \leq 1} |G(x - y) \theta(y) | \, dy \leq C \|\theta
  \|_{L^{\infty}}.
  \end{equation}
  Further if $|x| \geq R + 1$, then $1 \leq |x - y| \leq 2 |x|$, and we
  can bound  $|G(x - y)|$ by $ C(1 + \log (2 | x|))$. Now consider $|x| \leq R +1$ with  $|x
  -y| \geq 1$, we have instead the bound  $ C (1 + \log(2R + 1))$.
  Combining these estimates we deduce,
  \begin{equation}
  \label{eq:pK2}  
  \int\limits_{|x - y| \geq 1} |G(x - y) \theta(y) | \, dy \leq C(1 +
  \log(|x| + R + 1)) \|\theta
  \|_{L^1}.
  \end{equation}
  The result follows from (\ref{eq:pK1}) and (\ref{eq:pK2}).
\end{proof}
\begin{prop}
\label{prop:vortbdd}
  Suppose $\theta \in Y$. Let $\omega = Q * \theta$. Then
  for $2 < p \leq \infty$,
  \begin{equation*}
  \label{eq:omegaLp}
    \|\omega\|_{L^{p}} \leq C_{p} \|\theta\|_{Y}.
  \end{equation*}
\end{prop}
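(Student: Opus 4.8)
The plan is to estimate $\omega = Q*\theta$ by splitting the kernel $Q(z) = -\frac{1}{2\pi}\frac{z_1}{|z|^2}$ at the unit scale and applying Young's convolution inequality to each piece. Write $Q = Q_1 + Q_2$ with $Q_1 = Q\,\bm{1}_{\{|z|\le 1\}}$ and $Q_2 = Q\,\bm{1}_{\{|z|>1\}}$. Since $|Q(z)| \le \frac{1}{2\pi|z|}$ and $|z|^{-1}$ is locally integrable in $\R^2$, one has $Q_1 \in L^1(\R^2)$ with $\|Q_1\|_{L^1}\le 1$. On the other hand $\int_{\{|z|>1\}}|z|^{-p}\,dz = \frac{2\pi}{p-2}$ is finite precisely when $p>2$, so $Q_2 \in L^p(\R^2)$ for every $p \in (2,\infty]$ (the case $p=\infty$ being trivial since $|Q_2|\le\frac{1}{2\pi}$). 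The same near/far decomposition, applied to $\int |Q(x-y)||\theta(y)|\,dy$, shows absolute convergence of $Q*\theta$ for $\theta\in Y$.

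Next I would apply Young's inequality to the two pieces. For the near part, $\|Q_1 * \theta\|_{L^p} \le \|Q_1\|_{L^1}\|\theta\|_{L^p}$ for any $p\in[1,\infty]$; for the far part, using exponents $(p,1)\mapsto p$, $\|Q_2 * \theta\|_{L^p} \le \|Q_2\|_{L^p}\|\theta\|_{L^1}$ for $p\in(2,\infty]$. Since $\theta \in Y$ interpolates into every $L^p$ with $\|\theta\|_{L^p}\le\|\theta\|_{Y}$ for $1\le p\le\infty$, adding the two bounds gives
\[
\|\omega\|_{L^p} \le \big(\|Q_1\|_{L^1} + \|Q_2\|_{L^p}\big)\|\theta\|_{Y} =: C_p \|\theta\|_{Y}, \qquad 2 < p \le \infty,
\]
which is exactly the claimed estimate.

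There is no genuine obstacle in this argument; the only point worth flagging is that the restriction $2<p$ is sharp and comes entirely from the far-field piece, since $Q_2 \notin L^p$ for $p\le 2$, and accordingly $C_p \sim (p-2)^{-1/p}\to\infty$ as $p\downarrow 2$. If a marginally stronger conclusion were wanted one could instead use $Q_1 \in L^q$ for all $q\in[1,2)$ and redistribute exponents via Young with $1 + \tfrac1p = \tfrac1q + \tfrac1s$, but this refinement is not needed for the applications to Theorem \ref{thm:well}, so I would keep the proof in the form above.
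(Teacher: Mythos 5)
Your proof is correct and follows essentially the same route as the paper: a splitting at the unit length scale, with the near piece controlled by $\|\theta\|_{L^{\infty}}$ (local integrability of $|z|^{-1}$) and the far piece controlled by $\|\theta\|_{L^{1}}$ via Young's inequality, using that the truncated kernel lies in $L^{p}$ precisely for $p>2$. The only cosmetic difference is that you split the kernel $Q$ once and treat all $2<p\le\infty$ uniformly, whereas the paper first derives the pointwise ($p=\infty$) bound and then splits $\omega$ in physical space before invoking the same kernel truncation and Young's inequality for the tail.
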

\begin{proof}
  We split the integral \begin{equation}
\label{eq:splitT}
- 2\pi \omega(x) =    \int\limits_{|x - y| < 1} \frac{x_1 - y_1}{|x -
    y|^2} \theta(y) \, dy + \int\limits_{|x - y |
\geq 1} \frac{x_1 - y_1}{|x - y|^2} \theta(y) \, dy ,
\end{equation}
and so deduce
\begin{equation}
\label{eq:modthet}
|\omega(x)| \leq  \frac{\sqrt{2}}{2\pi} \left(   \|\theta \|_{L^{1}}  + 4\| \theta
  \|_{L^{\infty}} \right).
\end{equation}
The case $p = \infty$ follows. 

Let $B  \subset \R^2$ be the unit ball centered at the origin. Then we have
 \begin{equation}
\omega(x) =  \omega \bm{1}_{B} (x) +  \omega \bm{1}_{\R^2 \setminus
B}(x)
\end{equation}
pointwise. From (\ref{eq:modthet}), it follows immediately that
\begin{equation}
\| \omega \bm{1}_{B} \|_{L^{p}} \leq C_{p} \|\theta\|_{Y}.
\end{equation}
and the convolution
$\omega = Q * \theta$
converges absolutely. Accordingly, we find \begin{equation}
\omega \bm{1}_{\R^2 \setminus B} = (Q\bm{1}_{\R^2 \setminus B} ) *
\theta,
\end{equation}
where the truncated kernel is
\begin{equation}
  Q \bm{1}_{\R^2 \setminus B}(z) = 
  \begin{cases}
    Q(z) &, \text{if } z \in \R^2 \setminus B \\
    0 &, \text{otherwise}
  \end{cases}.
\end{equation}

Let $2 < p < \infty$,  such that  $| Q \bm{1}_{\R^2 \setminus B}|^{p}$ is
integrable. We conclude
 \begin{equation}
  \| \omega \bm{1}_{\R^2 \setminus B}\|_{L^{p}} \leq 
C_{p} \|\theta\|_{L^{1}}
\end{equation}
from Young's inequality for convolutions (e.g. see
\cite[Appendix~A]{stein1970singular}).
\end{proof}

\begin{prop}
  Suppose $\theta \in Y$. Let $\nabla u = \nabla G * \theta$. Then
  for $2 < p \leq \infty$,
  \begin{equation*}
  \label{eq:omegaLp}
    \|\nabla u\|_{L^{p}} \leq C_{p} \|\theta\|_{Y}.
  \end{equation*}
\end{prop}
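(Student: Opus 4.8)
The plan is to reproduce the structure of the proof of Proposition~\ref{prop:vortbdd}, since $\nabla u = \nabla G * \theta$ is again a convolution against a kernel that is homogeneous of degree $-1$ away from the origin, and thus $|\nabla G(z)| \leq C/|z|$. The only new wrinkle is that $\nabla G$ contains a genuine singularity at $z = 0$ (coming from differentiating the $\log|z|$ term in $G$) that must be handled as a principal value, so one should first record the explicit entries of $\nabla G$ — differentiating \eqref{eq:pK} gives a matrix whose entries are of the form $\widehat{z}_i \widehat{z}_j / |z|$ plus a $z_1/|z|^2$ term — and note that each entry is an odd, mean-zero kernel on circles, hence Calder\'on–Zygmund. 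Write $B$ for the unit ball centered at the origin and split $\nabla u = (\nabla G \,\bm{1}_B) * \theta + (\nabla G \,\bm{1}_{\R^2\setminus B}) * \theta$.

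For the exterior piece, $\nabla G\,\bm{1}_{\R^2\setminus B}$ satisfies $|\nabla G(z)| \leq C/|z|$ for $|z|\geq 1$, so $\nabla G\,\bm{1}_{\R^2\setminus B} \in L^p(\R^2)$ precisely when $p > 2$; then Young's inequality for convolutions gives $\|(\nabla G\,\bm{1}_{\R^2\setminus B})*\theta\|_{L^p} \leq \|\nabla G\,\bm{1}_{\R^2\setminus B}\|_{L^p}\|\theta\|_{L^1} \leq C_p \|\theta\|_{L^1}$, exactly as in Proposition~\ref{prop:vortbdd}. For $p = \infty$ one simply bounds $|(\nabla G\,\bm{1}_{\R^2\setminus B})*\theta(x)| \leq \|\nabla G\,\bm{1}_{\R^2\setminus B}\|_{L^\infty}\|\theta\|_{L^1} \leq C\|\theta\|_{L^1}$ using $|\nabla G(z)|\leq C$ for $|z|\geq 1$.

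For the interior piece the kernel is no longer locally integrable, so one cannot use Young's inequality; instead one invokes the Calder\'on–Zygmund property of each entry of $\nabla G$ — already flagged in the text just before \eqref{eq:gradUkernel} — which gives $L^p$ boundedness of the full (untruncated) singular integral operator for $1 < p < \infty$, hence of the truncated operator $(\nabla G\,\bm{1}_B)*$ after absorbing the tail, so $\|(\nabla G\,\bm{1}_B)*\theta\|_{L^p}\leq C_p\|\theta\|_{L^p}$ for $2 < p < \infty$. For $p = \infty$ this fails for CZ operators, so there one must instead reprise the direct estimate of \eqref{eq:modthet}: since the $\widehat{z}_i\widehat{z}_j/|z|$ and $z_1/|z|^2$ pieces are bounded by $C/|z|$ which is integrable on $B$, one gets $\|(\nabla G\,\bm{1}_B)*\theta\|_{L^\infty}\leq C\|\theta\|_{L^\infty}\int_B |z|^{-1}\,dz = C\|\theta\|_{L^\infty}$, where it matters that the principal-value cancellation is not even needed for this crude bound. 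Summing the two pieces and the two values of $p$, and noting $\|\theta\|_{L^1},\|\theta\|_{L^p},\|\theta\|_{L^\infty}\leq\|\theta\|_Y$, yields the claim. The main obstacle is purely bookkeeping: keeping the $p=\infty$ case (which rests on absolute integrability of $|z|^{-1}$ near the origin in $2d$ and boundedness at infinity) cleanly separated from the $2 < p < \infty$ case (which needs CZ theory interiorly and $L^p$-integrability of the decaying tail exteriorly), together with writing out $\nabla G$ explicitly enough to verify the homogeneity-$(-1)$ and mean-zero properties.
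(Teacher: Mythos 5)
Your decomposition (split the kernel at the unit scale, treat the far field by Young's inequality against $\|\theta\|_{L^1}$ for $p>2$ and by boundedness of the kernel for $p=\infty$, and bound the near field against $\|\theta\|_{L^\infty}$) is exactly the paper's route, and your exterior estimates are fine. The genuine problem is your treatment of the interior piece for $2<p<\infty$, which rests on a misreading of the kernel's homogeneity. Differentiating \eqref{eq:pK} gives \eqref{eq:sig}: every entry of $\nabla G$ is a cubic in $z$ divided by $|z|^4$, i.e.\ homogeneous of degree $-1$, hence bounded by $C/|z|$ and locally integrable in $\R^2$. So there is no principal value to take, the convolution $\nabla G*\theta$ converges absolutely for $\theta\in Y$ (as the paper states), and $\nabla G$ is not a Calder\'on--Zygmund kernel --- the CZ remark in the text before \eqref{eq:gradUkernel} refers to $\nabla\nabla G$, which is homogeneous of degree $-2$ and enters only at the level of second derivatives (Proposition \ref{prop:DDom}). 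Worse, the intermediate claim you lean on --- that the untruncated operator $\theta\mapsto\nabla G*\theta$ is bounded on $L^p$ for $1<p<\infty$ --- is false: convolution with a degree $-1$ kernel in two dimensions is a smoothing operator of order one (Hardy--Littlewood--Sobolev type), not an $L^p$-bounded singular integral, which is precisely why the proposition assumes $\theta\in L^1\cap L^\infty$ and only asserts $p>2$. Your write-up is also internally inconsistent: you first claim the interior kernel ``is no longer locally integrable, so one cannot use Young's inequality,'' and then your own $p=\infty$ estimate uses $\int_B|z|^{-1}\,dz<\infty$.

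The repair is immediate and is what the paper means by ``conclude as in the proof of Proposition \ref{prop:vortbdd}'': since $\|\nabla G\,\bm{1}_B\|_{L^1}\le C$ by the degree $-1$ homogeneity, Young's inequality gives $\|(\nabla G\,\bm{1}_B)*\theta\|_{L^p}\le C\|\theta\|_{L^p}\le C\|\theta\|_{Y}$ for every $2<p\le\infty$, with no Calder\'on--Zygmund theory and no cancellation used anywhere; combined with your (correct) exterior bounds this yields the stated estimate.
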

\begin{proof}
The singular kernel
$G$ away from the origin has gradient $\nabla G$, with homogeneity of
degree $-1$, such that, for integrable and bounded $\theta$, we have
the absolutely convergent integral
\begin{equation}
  \nabla u(x) =  \int_{\R^2} \nabla G (x -
  y)
\theta(y) \, dy,
\label{naUintT}
\end{equation}
where we compute explicitly as
\begin{equation}
\label{eq:sig}
\nabla G(z) =  - \frac{1}{4\pi |z|^4}
\begin{pmatrix}  z_2(z_1^2 - z_2^2) &  z_1^{3}
  + 3 z_1 z_2^2 \\  
z_1(z_2^2-z_1^2) &  z_2 (z_2^2 - z_1^2) \end{pmatrix} .
\end{equation}
Each entry of $\nabla G$ has the same cancellation property and homogeneity as $Q$, so we  conclude as in the proof of Proposition \ref{prop:vortbdd}.
\end{proof}

\begin{prop}
  \label{prop:DDom}
  Suppose $\theta \in L^{p}$ for some $p \in (1, \infty)$. Let $u = G *
  \theta$.  Then,
\begin{equation*}
\label{eq:domegaLp}
\|\nabla \nabla u \|_{L^{p}} \le C_{p} \|\theta\|_{L^{p}}.
\end{equation*}
\end{prop}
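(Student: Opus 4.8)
The plan is to establish the pointwise identity (\ref{eq:gradUkernel}) for $\theta \in C_c^\infty(\R^2)$, recognize the resulting principal-value operator as Calder\'on--Zygmund, and then conclude the $L^p$ bound for general $\theta \in L^p$ by density. First I would check that $\nabla u = \nabla G * \theta$ is unambiguous: the kernel $\nabla G$ written in (\ref{eq:sig}) is smooth off the origin and homogeneous of degree $-1$, hence locally integrable on $\R^2$, so for $\theta \in C_c^\infty$ the convolution and all manipulations below are justified. To differentiate once more, I would write $\nabla u(x) = \int_{\R^2} \nabla G(z)\,\theta(x - z)\,dz$, remove a disk $B(0,\varepsilon)$ about the singularity, differentiate under the integral on the complement, integrate by parts to transfer the $x$-derivative from $\theta$ onto the kernel, and let $\varepsilon \to 0$. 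Because $\nabla G$ is homogeneous of degree $-1$, the boundary contribution on $\partial B(0,\varepsilon)$ does \emph{not} vanish: it converges to $\theta(x)\,\bm{\mathrm{E}}$, where $\bm{\mathrm{E}}$ is the constant tensor with entries $\bm{\mathrm{E}}_{ijk} = \int_{|\xi| = 1}(\nabla G)_{jk}(\xi)\,\xi_i\,d\sigma(\xi)$, while the interior integral converges to $\frac{1}{4\pi}\,\mathrm{pv}\int_{\R^2}\theta(x - z)\,\nabla\nabla G(z)\,dz$; this is precisely (\ref{eq:gradUkernel}).

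Next I would verify that each scalar entry of $\nabla\nabla G$ is a Calder\'on--Zygmund kernel on $\R^2$: it is $C^\infty$ on $\R^2 \setminus \{0\}$ and homogeneous of degree $-2$, so only the mean-zero condition $\int_{|\xi| = 1} \nabla\nabla G\,d\sigma = 0$ (entrywise) requires checking. This is automatic here: each entry equals $\partial_i h$ for a smooth $h$ that is homogeneous of degree $-1$, and integrating $\partial_i h$ over the annulus $B(0,1)\setminus B(0,\varepsilon)$ and applying the divergence theorem shows the integral is independent of $\varepsilon$, whereas passing to polar coordinates shows the same integral equals $\log(1/\varepsilon)$ times the spherical mean of $\partial_i h$; hence that mean is zero.

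With (\ref{eq:gradUkernel}) and this cancellation in hand the conclusion is standard. By the classical Calder\'on--Zygmund theorem (see, e.g., \cite{stein1970singular}), the principal-value operator acting entrywise through $\nabla\nabla G$ is bounded on $L^p(\R^2)$ for every $p \in (1,\infty)$, with operator norm a constant $C_p$ depending only on $p$ and the fixed kernel; combined with the trivial estimate $\|\theta\,\bm{\mathrm{E}}\|_{L^p} \leq |\bm{\mathrm{E}}|\,\|\theta\|_{L^p}$, identity (\ref{eq:gradUkernel}) yields $\|\nabla\nabla u\|_{L^p} \leq C_p\,\|\theta\|_{L^p}$ for $\theta \in C_c^\infty$. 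Since $C_c^\infty$ is dense in $L^p$, one extends to arbitrary $\theta \in L^p$ by defining $\nabla\nabla u$ through this bounded operator (equivalently, $\nabla\nabla u_n \to \nabla\nabla u$ in $L^p$ along any $C_c^\infty$ approximation $\theta_n \to \theta$). I expect the main obstacle to be the first step --- carefully extracting the local term $\theta(x)\,\bm{\mathrm{E}}$ while differentiating across the non-integrable singularity of $\nabla\nabla G$, and identifying $\bm{\mathrm{E}}$ with the stated spherical integral; once that representation and the spherical cancellation are secured, the $L^p$ estimate is essentially a citation.
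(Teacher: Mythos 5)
Your proposal is correct and follows essentially the same route as the paper: differentiate $\nabla u = \nabla G * \theta$ by excising a disk and integrating by parts, so the boundary term produces the local tensor $\theta(x)\,\bm{\mathrm{E}}$ and the remainder is a principal-value operator with a degree $-2$, mean-zero kernel, bounded on $L^p$ by Calder\'on--Zygmund theory. The only cosmetic difference is that you verify the spherical cancellation abstractly (via the divergence-theorem/logarithmic-scaling argument), whereas the paper computes the kernels $\bm{\mathrm{H}}_1,\bm{\mathrm{H}}_2$ and the constant tensor $\bm{\mathrm{E}}$ explicitly.
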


\begin{proof}
For $\nabla \nabla u =  (\partial_1 \nabla u, \partial_2 \nabla u
) $, we differentiate carefully to resolve the strongly singular kernel.
In particular,  let  $z = x - y$ such that
\begin{align}
  \partial_1 \nabla u(x) &= \int_{\R^2} \nabla G(z)
  \partial_1 \theta (x - z) \, dz \\
                        &= \lim_{\epsilon  \to 0}
                        \int_{|z| \geq \epsilon }
                        -\frac{\partial}{\partial z_1} \left( \nabla G(z)
                        \theta(x - z) \right)  + \theta(x - z)\frac{\partial}{\partial z_1}
                         \nabla G(z)  \, dz .
\end{align}
The first integral is
\begin{equation}
\label{eq:Eterm}
\begin{aligned}
  =& \lim_{\epsilon  \to 0}   \int_{|z| = \epsilon } \nabla G(z) \theta( x - z) (-n_1) \cdot d \sigma \\
  =& \lim_{\epsilon  \to 0}  \frac{1}{\epsilon } \int_{|z| = \epsilon }
                                                   z_{1}  \nabla 
                                                   G(z)
                                                  \theta( x - z) \, d \sigma \\
  =& \frac{1}{8} \begin{pmatrix} 0 & -3 \\ 1 & 0  \end{pmatrix}
  \theta(x).
\end{aligned}%
\end{equation}
We compute 
\begin{equation}
\partial_1 \nabla G(z) = \frac{1}{4\pi} \frac{\bm{\mathrm{H}}_1(z)}{|z|^2} \quad
\text{and} \quad \partial_2 \nabla G(z) = \frac{1}{4\pi}
\frac{\bm{\mathrm{H}}_2(z)}{|z|^2}
\end{equation}
where each entry of $\bm{\mathrm{H}}: \R^2 \to  \R^{2\times 2\times 2}$, explicitly
\begin{equation}
\bm{\mathrm{H}}_1(z) =\frac{1}{|z|^4} \begin{pmatrix} 2 z_1 z_2 (z_1^2 - 3
  z_2^2) & z_1^{4} + 6 z_1^2
    z_2^2 - 3z_2^{4} \\ z_1^{4} - 6 z_1^2
  z_2^2 + z_2^{4} & 2 z_1 z_2 (3 z_2^2 -
z_1^2)  \end{pmatrix},
\end{equation}
and
\begin{equation}
\bm{\mathrm{H}}_2 (z) =\frac{1}{|z|^4}
  \begin{pmatrix} -z_1^{4} + 6z_1^2 z_2^2
    -z_2^{4}  & 2z_2(3z_1 z_2^2 -
    z_1^{3})\\ 2 z_1 z_2 (z_2^2 - 3 z_1^2)
                    & z_1^{4} - 6 z_1^2
  z_2^2 + z_2^{4} \end{pmatrix},
\end{equation}
is homogeneous of degree zero, mean zero on the unit sphere, and
symmetric with respect to reflection.
It follows
\begin{equation}
\partial_1 \nabla  u(x)  = \frac{1}{8} \begin{pmatrix} 0 & -3 \\ 1 & 0
\end{pmatrix} \theta(x) + \frac{1}{4 \pi} \mathrm{pv}
\int_{\R^2}\frac{\bm{\mathrm{H}}_1(z)}{|z|^2}
\theta(x - z ) \, dz,
\end{equation}
and
\begin{equation}
\partial_2 \nabla  u(x)  = \frac{1}{8} \begin{pmatrix} 1  & 0 \\ 0 & -1
\end{pmatrix} \theta(x) + \frac{1}{4 \pi} \mathrm{pv}
\int_{\R^2}\frac{\bm{\mathrm{H}}_2(z)}{|z|^2}
\theta(x - z ) \, dz.
\end{equation}
With the appropriate $\bm{\mathrm{E}} \in \R^{2\times 2\times 2}$, we may write
compactly
\begin{equation}
\label{eq:gradUkernel}
\nabla  \nabla  u(x)  =  \theta(x) \bm{\mathrm{E}} + \frac{1}{4 \pi} \mathrm{pv}
\int_{\R^2}\frac{\bm{\mathrm{H}} (z)}{|z|^2}
\theta(x - z ) \, dz.
\end{equation}
We conclude by examining this expression as a sum of operators. The
first summand is bounded like the identity, and the second is a
Calder\'{o}n-Zygmund operator.
\end{proof}

\begin{cor}
  \label{cor:all}
  Suppose $\theta \in Y$. Let $\nabla u = \nabla G * \theta$. Then
     for $p \in (2, \infty)$
\begin{equation*}
\label{eq:GradLp}
  \|  \nabla u\|_{W^{1,p}} \leq C_{p} \|\theta\|_{Y}.
\end{equation*} 
\end{cor}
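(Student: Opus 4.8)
The plan is to combine the $L^p$ bounds already in hand. By the preceding propositions, for $2 < p \leq \infty$ we have $\|\nabla u\|_{L^p} \leq C_p \|\theta\|_Y$ directly. For the derivative $\nabla \nabla u$, Proposition~\ref{prop:DDom} gives $\|\nabla \nabla u\|_{L^q} \leq C_q \|\theta\|_{L^q}$ for any $q \in (1,\infty)$; I would apply this with $q = p$. This requires knowing $\theta \in L^p$, which follows from $\theta \in Y = L^1 \cap L^\infty$ by interpolation, together with $\|\theta\|_{L^p} \leq \|\theta\|_Y$ (indeed $\|\theta\|_{L^p} \leq \|\theta\|_{L^1}^{1/p}\|\theta\|_{L^\infty}^{1-1/p} \leq \max\{\|\theta\|_{L^1},\|\theta\|_{L^\infty}\} \leq \|\theta\|_Y$).

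The one point needing a word of care is matching the formula for $\nabla \nabla u$: Proposition~\ref{prop:DDom} is stated for $u = G*\theta$ and expresses $\nabla\nabla u$ via equation~\eqref{eq:gradUkernel}, an identity-type term plus a Calder\'on--Zygmund operator applied to $\theta$. Since here $\nabla u = \nabla G * \theta$ is exactly $\nabla$ of $G*\theta$ (both convolutions converging absolutely for $\theta \in Y$ compactly supported, or more generally by the splitting used in the proof of Proposition~\ref{prop:vortbdd}), the distributional second derivative $\nabla \nabla u$ coincides with the object estimated in Proposition~\ref{prop:DDom}. So I would simply invoke that proposition.

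Assembling: for $p \in (2,\infty)$,
\begin{equation*}
\|\nabla u\|_{W^{1,p}} = \|\nabla u\|_{L^p} + \|\nabla\nabla u\|_{L^p} \leq C_p \|\theta\|_Y + C_p \|\theta\|_{L^p} \leq C_p \|\theta\|_Y,
\end{equation*}
absorbing constants. This closes the proof. There is no real obstacle here; the corollary is purely a bookkeeping consequence of the three $L^p$ propositions and the elementary interpolation $\|\theta\|_{L^p} \leq \|\theta\|_Y$, and the only thing to watch is being consistent about which norm convention $W^{1,p}$ denotes (sum versus max of $\|\nabla u\|_{L^p}$ and $\|\nabla\nabla u\|_{L^p}$) — either way the bound is immediate.
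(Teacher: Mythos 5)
Your proof is correct and matches the paper's (implicit) argument: the corollary is stated without a separate proof precisely because it is the combination of the preceding $L^p$ bound for $\nabla u$, Proposition~\ref{prop:DDom} applied with $q=p$, and the interpolation $\|\theta\|_{L^p}\leq\|\theta\|_{L^1}^{1/p}\|\theta\|_{L^\infty}^{1-1/p}\leq C\|\theta\|_{Y}$. Your remark on identifying the distributional $\nabla\nabla u$ with the principal-value expression of Proposition~\ref{prop:DDom} is exactly the point handled in that proposition's proof, so nothing further is needed.
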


\begin{proof}[Proof of Lemma \ref{lem:key}]
To deduce the desired H\"older continuity of
$\nabla u$, we observe Corollary \ref{cor:all} and  recall  Morrey's embedding: $W^{1,p} \subset C^{\mu}$ for
$\mu = 1 -2/ p$ with  $p
>2$.   
\end{proof}

\section{Proof of Theorem \ref{thm:well}}\label{sec:well}
Let us first define what we mean by a (Yudovich) weak solution:
\begin{defi}
	Let $\theta_0 \in Y$, then  $\theta(x,t)$ is a weak solution of system
  (\ref{eq:Bs}) given $$\theta \in L^{\infty}(0,T; L^{1} \cap L^{\infty})$$ for some $T > 0$, and that for any $\phi \in C^{1}(0,T;C^{1}_c)$, the following holds:
			\begin{equation*}
			\label{eq:soln}
			\int_{\R^2} \theta(x,T) \,\phi(x,T) \, dx - \int_{\R^2} \theta_0
      \,
      \phi(x,0) \, dx = \int_0^T \int_{\R^2} \theta \,
      \left(\frac{\partial}{\partial t}  +
      u \cdot \nabla \right) \phi	 \, dx \, dt,
			\end{equation*}
      where $u(t) = G * \theta(t)$.
\end{defi}
Notice from the definition that Lemma \ref{lem:key} plays a powerful role in proving
the well-posedness of weak solutions with compactly supported initial
data. If we can maintain that $\theta(t)$ remains compactly supported in
its evolution, then the  characteristics for $\theta$ have sufficient regularity to preserve $L^p$ norms via the transport equation. With this observation, we obtain a sequence of smooth, compactly supported initial
data $\theta_0^{\epsilon}$ with regularization parameter $\epsilon$.
For such data, we obtain a family of global
smooth solutions $\theta^{\epsilon}$  with 
control of \begin{equation}
\label{eq:epcont}
\supp\theta^{\epsilon}(t) \subset B(0,R(t)) \quad\text{and}\quad
u^{\epsilon}(t) \in C^{1+\mu}(B(0,R(t)))
\end{equation}   for all time, depending only on $\|\theta_0\|_Y$. Then, the limit $\epsilon \to 0$
yields a Yudovich weak solution via the classical arguments in
\cite{yudovich1963non}. 
The limiting solution $\theta$ (possibly non-unique)
inherits these uniform controls, and these controls are strong enough to
achieve uniqueness. 
This section is dedicated to proving the claim of global well-posedness
of classical solutions
with uniform in $\epsilon $ control of \eqref{eq:epcont} 
(Proposition \ref{prop:smoothsoln}).

We begin by considering solutions to the sequence of linear equations
\begin{equation}
  \left(\frac{\partial}{\partial t} + u^{n}\cdot\nabla\right) \theta^{n+1} = 0,
\label{tneq}
\end{equation}
where $u^{n}(t) := G * \theta^{n}(t)$
for positive integers $n$, defined inductively.
The initial data are fixed as $\theta^{n}(0) = \theta_0$ uniformly in $n$,
where
\begin{equation}
\label{eq:idcond}
\theta_0 \in H^{1} \cap Y, \quad \text{and} \quad \supp \theta_0 \subset B(0, R_0).
\end{equation}
For $n=0$, take $\theta^0(t) = \theta_0$.

\begin{prop}
  \label{prop:sequni}
  The sequence of functions $\theta^{n}$ described above is
  well-defined. Moreover, the following uniform estimates hold.
  \begin{enumerate}
    \item For $1 \leq p \leq \infty$,
      \begin{equation*}
      \label{eq:seqLpT}
      \|\theta^{n}(t)\|_{L^{p}} \leq \|\theta_0\|_{L^{p}}.
      \end{equation*}
    \item For $R(0) = R_0$, we have 
      \begin{equation*}
      \label{eq:seqSuppT}
      \supp \theta^{n}(t) \subset B(0,R(t))
      \end{equation*}
      with $R(t)$ obeying the differential inequality
       \begin{equation*}
      \label{eq:dR}
      \frac{d R(t)}{d t} \leq C (1 + \log (2R(t) + 1)) \|\theta_0\|_{Y}.
      \end{equation*}
    \item
      For $\mu \in (0,1)$,
      \begin{equation*}
      \label{eq:estthree}
      \|\nabla u^{n}(t)\|_{C^{\mu}} \leq C_{\mu} \|\theta_0\|_{Y} 
      \end{equation*}
      where $u^{n}(t) = G * \theta^{n}(t)$.
  \end{enumerate}
\end{prop}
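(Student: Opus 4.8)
The plan is to induct on $n$, carrying along at each stage the three stated bounds together with just enough regularity --- membership in $L^{\infty}(0,T;H^{1}\cap Y)$ with compact support --- to keep the iteration meaningful. The organizing device is to fix, once and for all, the function $R(t)$ of claim (2) as the solution of the scalar Cauchy problem
\begin{equation*}
\frac{dR}{dt}=C\bigl(1+\log(2R+1)\bigr)\|\theta_0\|_{Y},\qquad R(0)=R_0,
\end{equation*}
with $C$ the constant of Lemma \ref{lem:pointu}. Since the right-hand side grows only like $\log R$, one has $\int_{R_0}^{\infty}\frac{dR}{1+\log(2R+1)}=\infty$, so this problem has a solution on all of $[0,\infty)$ (Osgood-type non-blowup), and $R$ is nondecreasing. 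The base case $n=0$ is immediate: $\theta^{0}(t)\equiv\theta_0$ gives (1), $\supp\theta^{0}(t)=\supp\theta_0\subset B(0,R_0)\subset B(0,R(t))$ gives (2), and Lemma \ref{lem:key} applied to $u^{0}=G*\theta_0$ gives (3).

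For the inductive step, suppose $\theta^{n}\in L^{\infty}(0,T;H^{1}\cap Y)$ is compactly supported and satisfies (1)--(3). From $u^{n}=G*\theta^{n}=\nabla^{\perp}(K*\theta^{n})$ we read off $\div u^{n}=0$, and Lemma \ref{lem:key} together with (1) gives $\|\nabla u^{n}(t)\|_{L^{\infty}}\le C_{\mu}\|\theta_0\|_{Y}$ uniformly in $t\in[0,T]$; thus $u^{n}$ is Lipschitz in space with a time-uniform constant and Carath\'eodory in $t$, so its flow $X^{n}(\cdot,t)$ is a globally defined, volume-preserving bi-Lipschitz homeomorphism of $\R^{2}$. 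Define $\theta^{n+1}(\cdot,t):=\theta_0\circ\bigl(X^{n}(\cdot,t)\bigr)^{-1}$, which is the unique solution of \eqref{tneq}. Volume preservation gives $\|\theta^{n+1}(t)\|_{L^{p}}=\|\theta_0\|_{L^{p}}$ for all $p$, i.e. (1) at level $n+1$; differentiating $\theta^{n+1}=\theta_0\circ(X^{n})^{-1}$, bounding $\nabla (X^{n})^{\pm1}$ by $\exp\bigl(\int_0^{t}\|\nabla u^{n}\|_{L^{\infty}}\bigr)\le\exp(C_{\mu}\|\theta_0\|_{Y}t)$ and using $\det\nabla X^{n}\equiv1$ keeps $\theta^{n+1}$ in $L^{\infty}(0,T;H^{1}\cap Y)$; and once (2) is known at level $n+1$, Lemma \ref{lem:key} applied to $u^{n+1}=G*\theta^{n+1}$ with $\|\theta^{n+1}(t)\|_{Y}\le\|\theta_0\|_{Y}$ yields (3) at level $n+1$.

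It remains to prove (2) at level $n+1$ with $R(t)$ \emph{not depending on $n$}, which is the crux. Since $\supp\theta^{n+1}(t)=X^{n}(\supp\theta_0,t)$, it suffices to show every trajectory $\gamma(t):=|X^{n}(a,t)|$ with $|a|\le R_0$ obeys $\gamma(t)\le R(t)$ on $[0,T]$. Using the inductive hypothesis $\supp\theta^{n}(t)\subset B(0,R(t))$ and $\|\theta^{n}(t)\|_{Y}\le\|\theta_0\|_{Y}$, Lemma \ref{lem:pointu} gives
\begin{equation*}
\dot\gamma(t)\le\bigl|u^{n}(X^{n}(a,t),t)\bigr|\le C\bigl(1+\log(\gamma(t)+R(t)+1)\bigr)\|\theta_0\|_{Y},
\end{equation*}
whose right-hand side is continuous and increasing in $\gamma$ and equals $\dot R(t)=C(1+\log(R(t)+R(t)+1))\|\theta_0\|_{Y}$ at $\gamma=R$. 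Combined with $\gamma(0)=|a|\le R_0=R(0)$, the standard comparison principle for differential inequalities forces $\gamma(t)\le R(t)$ on $[0,T]$, which is (2), hence also (3), closing the induction.

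The main obstacle is precisely this coupling step: each of the three estimates is, in isolation, a one-line consequence of Lemmas \ref{lem:pointu} and \ref{lem:key}, but synchronizing them so that the support radius is controlled \emph{uniformly in $n$} forces the choice of $R(t)$ as the solution of the logarithmic ODE above and the attendant check that this ODE is globally solvable. This uniform control is exactly what survives the later passage $\epsilon\to0$ in the construction of the Yudovich weak solution.
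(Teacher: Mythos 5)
Your proposal is correct and follows essentially the same inductive scheme as the paper: conservation along the divergence-free transport for estimate (1), Lemma \ref{lem:pointu} applied to the flow-map trajectories for estimate (2), and Lemma \ref{lem:key} for estimate (3). Your extra care in fixing $R(t)$ once and for all as the Osgood-type ODE solution and invoking a comparison principle simply makes explicit the $n$-uniformity that the paper records as the differential inequality for $R(t)$, and your $H^{1}$ propagation is handled in the paper separately (Proposition \ref{prop:unigradt}), so nothing essential differs.
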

\begin{proof}
  For $n = 0$, all the estimates
  are immediate as  $\theta^{0} = \theta_0$. Now letting $n \geq 0$,
  suppose  $\theta^{n}$ satisfies the estimates of the theorem.

  For the first estimate, the regularity of  $u^{n}$ in the linear
  transport equation (\ref{tneq}) produces a unique global
  solution $\theta^{n+1}$, and  $\|
  \theta^{n+1}\|_{L^{\infty}}$ is preserved along characteristics. Further, we may multiply
  (\ref{tneq}) by $\theta |\theta|^{p-2}$, for $p \geq 2$, and integrate
  by parts  to discover 
  \begin{equation}
  \label{eq:seqLpTnn}
    \frac{d}{dt} \|\theta^{n+ 1}\|_{L^{p}} \leq 0,
  \end{equation}
  since $u^{n}$ is divergence-free. We
  deduce $\theta^{n+1}$
  also satisfies Estimate \ref{eq:seqLpT}, as desired.

  For the second estimate, we observe $u^{n} = G * \theta^{n}$ implies
  \begin{equation}
  \label{eq:sequdR}
|u^{n}(x,t)| \le C(1 + \log(|x|+ R(t) + 1))\|\theta^{n}(t)\|_Y.
  \end{equation}
Indeed $\theta^{n} \in Y$, and the previous arguments produce
 \begin{equation}
\label{eq:seqLYT}
\|\theta^{n+1}(t)\|_{Y} \leq \|\theta_0\|_{Y}.
\end{equation}
Moreover, $\theta^{n+1}$ is transported by $u^{n}$, which we showed is
bounded.  Let $X(a,t)$ be the flow map generated by $u^{n}$ with labels
$a \in \R^2$, such that
\begin{equation}
\label{eq:flowmap}
\frac{d X(a,t)}{dt} = u^{n}(X(a,t),t)
\end{equation}
where $X(a,0) = a$. Observe 
\begin{equation}
\label{eq:RX}  
\supp \theta^{n+1}(t) \subset \{ x  \mid  x = X(a,s),\, s \in \lbrack 0,
  t \rbrack, \, a \in B(0, R_0)
\}   ,
\end{equation}
whereby the inequality (\ref{eq:flowmap}) gives
\begin{equation}
\label{eq:seqdR}
\begin{aligned}
  \frac{dR(t)}{dt} &\leq \sup_{a \in B(0,R_0)} |u(X(a,t),t)|  \\
  &\leq  C  (1 + \log(2 R(t) + 1)) \| \theta^{n}
(t)\|_{Y},
\end{aligned}
\end{equation}
such that Estimate \ref{eq:seqLpT} implies $\theta^{n+1}$ satisfies Estimate \ref{eq:seqSuppT} in the
Proposition.

For Estimate \ref{eq:estthree}, we may simply apply Lemma \ref{lem:key} and the previous
conclusions to deduce
\begin{equation}
\|\nabla u^{n+1}(t)\|_{C^{\mu}} \leq C_{\mu} \|\theta_0\|_{Y}
\end{equation}
for $\mu \in (0,1)$ where $u^{n+1}(t) = G * \theta^{n+1}(t)$.
The results follow from induction.
\end{proof}

\begin{prop}
  \label{prop:unigradt}
  The sequence of functions $\theta^{n}$ satisfies the  uniform
  estimate 
\begin{equation*}
\|\nabla\theta^{n}(t)\|_{L^{p}} \leq
\|\nabla\theta_0\|_{L^{p}}\exp(C_{L}t)
\end{equation*}
for $1\leq p\leq \infty$.
\end{prop}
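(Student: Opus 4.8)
The plan is to derive a differential inequality for $\|\nabla\theta^{n+1}(t)\|_{L^p}$ by differentiating the linear transport equation \eqref{tneq} and then closing the estimate uniformly in $n$ using the bounds already established in Proposition \ref{prop:sequni}. Differentiating \eqref{tneq} in space gives the evolution equation
\begin{equation*}
\left(\frac{\partial}{\partial t} + u^{n}\cdot\nabla\right)\nabla\theta^{n+1} = -(\nabla u^{n})\cdot\nabla\theta^{n+1}.
\end{equation*}
This is a transport equation for $\nabla\theta^{n+1}$ along the (regular) flow of $u^{n}$ with a right-hand side that is linear in $\nabla\theta^{n+1}$ with coefficient $\nabla u^{n}$. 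Since $u^{n}$ is divergence-free, the $L^p$ norm is conserved by pure transport, so testing against $|\nabla\theta^{n+1}|^{p-2}\nabla\theta^{n+1}$ (for $2\le p<\infty$, with the $p=\infty$ and $p=1$ cases handled by the usual limiting/duality arguments, or directly along characteristics) and integrating by parts yields
\begin{equation*}
\frac{d}{dt}\|\nabla\theta^{n+1}(t)\|_{L^p} \le \|\nabla u^{n}(t)\|_{L^{\infty}}\,\|\nabla\theta^{n+1}(t)\|_{L^p}.
\end{equation*}

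The crucial input is that $\|\nabla u^{n}(t)\|_{L^{\infty}}$ is bounded by a constant depending only on the initial data, uniformly in both $n$ and $t$. This follows immediately from Estimate 3 of Proposition \ref{prop:sequni}, since $C^{\mu}\hookrightarrow L^{\infty}$ gives $\|\nabla u^{n}(t)\|_{L^{\infty}}\le \|\nabla u^{n}(t)\|_{C^{\mu}}\le C_{\mu}\|\theta_0\|_{Y} =: C_L$, a constant independent of $n$. Plugging this into the differential inequality and applying Gr\"onwall gives
\begin{equation*}
\|\nabla\theta^{n+1}(t)\|_{L^p}\le \|\nabla\theta_0\|_{L^p}\exp(C_L t),
\end{equation*}
which together with the base case $\theta^0(t)=\theta_0$ closes the induction. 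Note that the initial-data assumption $\theta_0\in H^1\cap Y$ from \eqref{eq:idcond} (and, for general $p$, the implicit assumption $\nabla\theta_0\in L^p$) is what makes $\|\nabla\theta_0\|_{L^p}$ finite to begin with; for the patch regularization $\theta_0^{\epsilon}$ this quantity may blow up as $\epsilon\to 0$, but that is irrelevant here since the proposition is a statement about a fixed smooth approximating datum.

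The only real subtlety — and the main thing to be careful about rather than a genuine obstacle — is justifying the integration by parts and the $L^p$ energy estimate rigorously for the full range $1\le p\le\infty$. For $p=\infty$ one argues directly along characteristics: $|\nabla\theta^{n+1}(X(a,t),t)|$ satisfies an ODE inequality $\frac{d}{dt}|\nabla\theta^{n+1}|\le \|\nabla u^{n}(t)\|_{L^{\infty}}|\nabla\theta^{n+1}|$, and Gr\"onwall along each trajectory gives the bound. For $1\le p<\infty$ one uses the smoothness of $\theta^{n+1}$ (guaranteed by the regularity of $u^n$ and the propagation of $H^1\cap Y$ along the flow) to justify differentiating $\frac{d}{dt}\int|\nabla\theta^{n+1}|^p\,dx$, noting the transport term $\int u^n\cdot\nabla(|\nabla\theta^{n+1}|^p)\,dx$ vanishes after integration by parts because $\div u^n=0$. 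Everything else is routine, so I would present this proof compactly, emphasizing the uniform-in-$n$ bound on $\|\nabla u^{n}\|_{L^\infty}$ as the one nontrivial ingredient.
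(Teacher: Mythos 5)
Your proof is correct and follows essentially the same route as the paper: differentiate the linear transport equation \eqref{tneq}, perform the $L^p$ energy estimate (with $p=1,\infty$ handled directly along characteristics), and close via Gr\"onwall using the uniform-in-$n$ bound $\|\nabla u^{n}\|_{L^{\infty}}\le C_\mu\|\theta_0\|_{Y}$ from Estimate 3 of Proposition \ref{prop:sequni}. The only cosmetic difference is that the paper writes the differentiated equation for $\nabla^{\perp}\theta^{n}$ rather than $\nabla\theta^{n+1}$, which changes nothing in the estimate.
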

\begin{proof}
  
Differentiating the transport equation (\ref{tneq}) yields
\begin{equation}
\label{eq:gradtransport}
\left( \frac{\partial}{\partial t} + u^{n-1} \cdot \nabla  \right)  \nabla^{\perp} \theta^{n} 
= (\nabla u^{n-1}) \nabla^{\perp} \theta^{n}.
\end{equation}
 We then multiply the above equation by
$\nabla^{\perp} \theta^{n}|\nabla^{\perp}\theta^{n}|^{p -2} $ for $p
\geq 2$, and integrate by parts to find
\begin{equation}
\label{eq:LpgradTp}
	\begin{aligned}
	 \frac{d}{dt} \| \nabla^{\perp} \theta^{n}\|^{p}_{L^{p}} 
					&\leq p \int_{\R^2}|\nabla u^{n-1}||\nabla \theta^{n}
          |^{p}\,dx .
	\end{aligned}
\end{equation}
Therefore \begin{equation}
\label{eq:LpgradT}
	\frac{d}{dt}\|\nabla \theta^{n}\|_{L^{p}} \leq \|\nabla
  u^{n-1}\|_{L^{\infty}} \|\nabla \theta^{n}\|_{L^{p}},
\end{equation}
for $1 \le p \le \infty$, where the cases $p = 1$ and  $p = \infty$ are
direct from (\ref{eq:gradtransport}).
From here, Gr\"{o}nwall's Lemma yields
\begin{equation}
\label{eq:gronwallGrad}
\|\nabla \theta^{n}(t)\|_{L^{p}} \leq \|\nabla \theta_0\|_{L^{p}}
\exp\left( \int_0^{t} \|\nabla u^{n-1}(s)\|_{L^{\infty}} \, ds  \right).
\end{equation}
By the third estimate in Proposition \ref{prop:sequni}, it follows $\nabla u^{n-1}$ is
bounded absolutely by a  constant  $C_{L}$ which
depends only on $ \|\theta_0\|_{Y}$.
\end{proof}
\begin{prop}
\label{prop:L2}
  For any $t$, the sequence $\theta^{n}(t)$ converges strongly in $L^{2}$  to some
  function  $\theta(t)$. Moreover, $\theta$ obeys Estimates
  \ref{eq:LpT}, \ref{eq:suppT} and \ref{eq:cUT} in
  Theorem \ref{thm:well}.
\end{prop}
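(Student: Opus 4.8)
The plan is to establish that the sequence $\theta^n$ is Cauchy in $C([0,T];L^2)$ and identify the limit as the desired weak solution inheriting the uniform bounds. First I would derive the difference equation: subtracting the equations for $\theta^{n+1}$ and $\theta^{n}$ gives
\begin{equation}
\left(\frac{\partial}{\partial t} + u^{n}\cdot\nabla\right)(\theta^{n+1}-\theta^{n}) = -(u^{n}-u^{n-1})\cdot\nabla\theta^{n}.
\end{equation}
Multiply by $\theta^{n+1}-\theta^{n}$ and integrate; since $u^n$ is divergence-free the transport term drops, and one is left with
\begin{equation}
\frac{d}{dt}\|\theta^{n+1}-\theta^{n}\|_{L^2}^2 \leq 2\|u^{n}-u^{n-1}\|_{L^{\infty}(B(0,R(t)))}\,\|\nabla\theta^{n}\|_{L^2}\,\|\theta^{n+1}-\theta^{n}\|_{L^2}.
\end{equation}
Here I use that $\theta^{n}$ is supported in $B(0,R(t))$, so only the values of $u^n-u^{n-1}$ on that ball matter, and Proposition \ref{prop:unigradt} gives $\|\nabla\theta^{n}(t)\|_{L^2}\leq\|\nabla\theta_0\|_{L^2}\exp(C_L t)$, which is finite on $[0,T]$.

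The next step is to bound $\|u^{n}-u^{n-1}\|_{L^{\infty}(B(0,R(t)))}$ by $\|\theta^{n}-\theta^{n-1}\|_{L^2}$ (up to constants depending on $R(t)$ and hence on $T$). Since $u^n - u^{n-1} = G*(\theta^n-\theta^{n-1})$ and both densities are supported in a fixed ball $B(0,R(T))$ for $t\le T$, I would split the kernel $G$ as in the proof of Lemma \ref{lem:pointu}: the near part $G\bm{1}_{|z|\le 1}$ is in $L^2_{\mathrm{loc}}$ (since $|G(z)|\lesssim 1+|\log|z||$ is square-integrable near the origin), so its contribution is controlled by $\|\theta^n-\theta^{n-1}\|_{L^2}$ via Cauchy--Schwarz; the far part, restricted to $|z|\le 2R(T)$ because of compact support, is bounded and again yields a Cauchy--Schwarz bound by $\|\theta^n-\theta^{n-1}\|_{L^2}$ with constant depending on $R(T)$. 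Combining, $\|u^n-u^{n-1}\|_{L^\infty(B(0,R(t)))}\le C_T\|\theta^n-\theta^{n-1}\|_{L^2}$.

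Plugging this in and writing $a_n(t):=\sup_{s\le t}\|\theta^{n+1}-\theta^{n}\|_{L^2}$, the differential inequality integrates to $a_n(t)\le C_T\int_0^t a_{n-1}(s)\,ds$ on $[0,T]$, whence by iterating, $a_n(t)\le a_0(T)\,(C_T t)^n/n!$; since $\theta^1,\theta^0$ share initial data and satisfy uniform bounds, $a_0(T)<\infty$. Summability of $(C_T T)^n/n!$ shows $\sum_n a_n(T)<\infty$, so $\theta^n$ converges in $C([0,T];L^2)$ to a limit $\theta$; uniqueness of the $L^2$ limit at each $t$ gives the pointwise-in-time statement. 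Finally, the uniform estimates of Proposition \ref{prop:sequni} pass to the limit: $L^p$ bounds for $p\in[1,\infty]$ by weak-$*$ lower semicontinuity (interpolating via the $L^2$ convergence), the support condition because $\mathrm{spt}\,\theta(t)\subset B(0,R(t))$ is closed under $L^2$ limits, and the $C^\mu$ bound on $\nabla u(t)=G*\theta(t)$ from Lemma \ref{lem:key} applied to the limit $\theta(t)\in Y$.

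The main obstacle is the second step: obtaining the bound $\|u^n-u^{n-1}\|_{L^\infty}\lesssim\|\theta^n-\theta^{n-1}\|_{L^2}$ uniformly on $[0,T]$. This is where compact support is essential---$G$ is not in any $L^q$ globally, so one genuinely needs the a priori support control from Proposition \ref{prop:sequni}(2) to truncate the kernel, and one must be careful that the constant depends only on $R(T)$ (hence on $T$ and $\|\theta_0\|_Y$) and not on $n$. Everything else is the standard Picard-iteration-in-$L^2$ argument in the Yudovich framework.
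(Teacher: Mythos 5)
Your proposal is correct and follows essentially the same route as the paper: an $L^2$ energy estimate for the difference $\theta^{n+1}-\theta^n$, an $L^\infty$ bound on $u^n-u^{n-1}=G*(\theta^n-\theta^{n-1})$ on the support ball obtained by Cauchy--Schwarz with the truncated kernel, the uniform $\|\nabla\theta^n\|_{L^2}$ bound from Proposition \ref{prop:unigradt}, and iteration of the resulting integral inequality to get factorial decay and summability. The only cosmetic difference is that the paper symmetrizes the difference equation (transport by $\langle u\rangle^{n}$ with forcing $v^{n}\cdot\nabla\langle\theta\rangle^{n+1}$) while you keep transport by $u^{n}$ with forcing $-(u^{n}-u^{n-1})\cdot\nabla\theta^{n}$; both lead to the same estimate.
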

\begin{proof}
  
We denote 
\begin{equation}
\vartheta^ {n+1} = \theta^{n+1}-\theta^n
\label{deln}
\end{equation}
and conclude by showing the sequence $\vartheta^ {n}(t)$ is summable
in $L^2 $ for any $t$. Accordingly, we define $v^{n} = G * \vartheta^{n}$ such that equation obeyed by $\vartheta^ {n+1}$ reads
\begin{equation}
\partial_t \vartheta^ {n+1} + \langle u\rangle^{n}\cdot\nabla\vartheta^ {n+1} + v^{n}\cdot\nabla\langle\theta\rangle^{n+1} = 0
\label{delneq}
\end{equation}
where
\begin{equation}
\langle u\rangle^{n} = \frac{1}{2}\left( u^n + u ^{n-1}\right)
\label{Un}
\end{equation}
and 
\begin{equation}
\langle\theta\rangle^{n+1} = \frac{1}{2}\left (\theta^{n+1} + \theta^n\right).
\label{Tn}
\end{equation}
Taking the $L^2$ inner product of (\ref{delneq}) with $\vartheta^ {n+1}$
gives us
\begin{equation}
\frac{d}{dt}\|\vartheta^ {n+1}(t)\|_{L^2}^2 = -2\int_{\R^2} v^{n}\cdot\nabla\langle\theta\rangle^{n+1}\vartheta^ {n+1} dx 
\label{l2}
\end{equation}
after integration by parts, noting $\langle u\rangle^{n}$ is divergence-free.

%
%

We estimate the convolution $G * \vartheta^{n}$ using the fact that the supports are in the ball of radius $R(t)$. 
We obtain
\begin{equation}
|v^n(x,t)| \le \|\vartheta^ n(t)\|_{L^2} \left(\int_{|y| \leq   R(t)} |
G(x - y)
|^2 \, d y \right)^{1/2}
\label{vnb}
\end{equation}
from the Cauchy-Schwarz inequality. Using Lemma \ref{lem:pointu} and evaluating the integral, we discover
\begin{equation}
|v^n(x,t)| \le C(1+\log (2R(t) + 1)) R(t) \|\vartheta^ n(t)\|_{L^2}.
\label{vnb}
\end{equation}
Now the righthand side of (\ref{l2}) has the bound,
\begin{equation}
\begin{array}{l}
\left |-2\int_{\R^2} v^{n}\cdot\nabla\langle\theta\rangle^{n+1}\vartheta^ {n+1} dx\right| \\
\le C(1+\log(2R(t)+1)) R(t) \|\nabla\langle\theta\rangle^{n+1}(t)\|_{L^2}\|\vartheta^ n(t)\|_{L^2}\|\vartheta^ {n+1}(t)\|_{L^2}.
\end{array}
\label{rhs}
\end{equation}
The estimate then becomes,
\begin{equation}
\label{eq:nathetana}
\frac{d}{dt} \Theta^{n+1}(t) \leq  A_{T} \|\nabla \langle\theta\rangle^{n+1}\|_{L^2}
\Theta^{n}(t),
\end{equation}
where $\Theta^{n} = \| \vartheta^{n} \|_{L^{2}}$, and $A_{T} =
C(1+\log(2R(T)+1)) R(T)$.
Using Proposition \ref{prop:unigradt} for $p=2$, we write the integral expression for
all $t \leq T$
\begin{equation}
\label{eq:nathetanint}
\Theta^{n+1}(t) \leq  A_{T} \|\nabla \theta_0\|_{L^{2}} \exp(C_{L}T)
\int_0^{t} \Theta^{n}(s) \, ds,
\end{equation}
noting that $\Theta^{n}(0) = 0$ for all $n$. 

Denoting $C_{T} =
A_{T}
\| \nabla \theta_0\|_{L^{2}} \exp(C_{L}T)$, we compute $n = 1$:
\begin{equation}
\label{eq:firstn}
  \begin{aligned}
    \Theta^{2}(t) &\leq C_{T} \int_0^{t} \Theta^{1}(s) \, d s \\
                  &\leq C_{T} \left(  \sup_{t \leq T}\Theta^{1}(t)
                  \right) t \\
                  &\leq C_{T} C_{1} t
  \end{aligned}
\end{equation}
where $C_{1}$ depends only on  $\|\theta_0\|_{L^2}$.
Suppose now for some $n$,
\begin{equation}
\label{eq:middlen}
\Theta^{n}(t) \leq C_{1} \frac{(C_{T} t)^{n}}{n!},
\end{equation}
then we have
\begin{equation}
\label{eq:nextn}
  \begin{aligned}
    \Theta^{n+1}(t) &\leq C_{1} C_{T}^{n+1} \int_0^{t}
    \frac{t^{n}}{n!} \\
                    &\leq C_{1} \frac{(C_{T} t)^{n+1}}{(n+1)!}.
  \end{aligned}
\end{equation}

It follows by induction that estimate (\ref{eq:middlen}) holds for all  $n
 \geq 1$. Indeed,
 \begin{equation}
\label{eq:sumn}
\sum_{n=1}^{\infty} \Theta^{n+1}(t) \leq C_{1} \exp(C_{T} t) 
\end{equation}
such that $\Theta^{n}(t)$ is summable for all $t \leq T$, where  $T$ is
arbitrary. Therefore, the sequence $\theta^{n}(t)$ converges strongly in $L^{2}$
 to a function $\theta(t)$ for any $t$, and the uniform
bounds in Proposition \ref{prop:sequni} are inherited by
the limit.
\end{proof}
In view of Lemma \ref{lem:pointu}, the linear operator given by kernel $G$ is generally not bounded in
 $L^{2}$. Towards a convergence in more regular spaces, an immediate
corollary of Proposition \ref{prop:sequni} is that $\|\nabla \nabla u^{n} \|_{L^2} \leq C
\|\theta_0\|_{Y}$, uniformly
in $n$. We further establish the higher regularity estimate:
\begin{lemma}
  \label{lem:seqapriori}
 Suppose $\theta \in Y \cap H^{m}$ for some positive integer $m$. Let $u =
 G * \theta$. Then,
 \begin{equation*}
    \|D^{m+2} u\|_{L^2} \leq  \|\theta\|_{H^{m}}.
 \end{equation*}
\end{lemma}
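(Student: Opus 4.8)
The plan is to prove this as a direct Fourier--multiplier estimate: the map $\theta\mapsto u=G*\theta$ has a symbol which is homogeneous of degree $-2$ and bounded by $|\xi|^{-2}$, so applying $m+2$ derivatives produces a symbol of size $O(|\xi|^{m})$, and Plancherel's theorem turns the resulting integral into $\||\xi|^{m}\widehat{\theta}\|_{L^2}^2\le\|\theta\|_{H^m}^2$.

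Concretely, I would first record the symbol. As in the introduction, $u=\nabla^{\perp}\psi$ with $\psi=-(\Delta^2)^{-1}\partial_1\theta$, so on the Fourier side $\widehat{\psi}(\xi)=-\dfrac{i\xi_1}{|\xi|^4}\,\widehat{\theta}(\xi)$, whence
\begin{equation*}
\widehat{u}(\xi)=i\xi^{\perp}\widehat{\psi}(\xi)=\frac{\xi_1\,\xi^{\perp}}{|\xi|^4}\,\widehat{\theta}(\xi)=:\widehat{G}(\xi)\,\widehat{\theta}(\xi),\qquad \xi^{\perp}=(-\xi_2,\xi_1),
\end{equation*}
on $\R^2\setminus\{0\}$, with $|\widehat{G}(\xi)|=|\xi_1|/|\xi|^{3}\le|\xi|^{-2}$. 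Then for a multi-index $\alpha$ with $|\alpha|=m+2$ one has $\widehat{\partial^{\alpha}u}(\xi)=(i\xi)^{\alpha}\widehat{G}(\xi)\,\widehat{\theta}(\xi)$ and $|(i\xi)^{\alpha}\widehat{G}(\xi)|\le|\xi|^{m+2-2}=|\xi|^{m}$; since $m\ge1$ this symbol is bounded near the origin, so there is no low-frequency obstruction, and it is $\le(1+|\xi|^2)^{m/2}$ everywhere. Summing over $|\alpha|=m+2$, using $\sum_{|\alpha|=k}|\xi^{\alpha}|^2\le|\xi|^{2k}$, and applying Plancherel,
\begin{equation*}
\|D^{m+2}u\|_{L^2}^2=\int_{\R^2}\Big(\sum_{|\alpha|=m+2}|\xi^{\alpha}|^2\Big)\,|\widehat{G}(\xi)|^2\,|\widehat{\theta}(\xi)|^2\,d\xi\le\int_{\R^2}|\xi|^{2(m+2)}\,\frac{|\widehat{\theta}(\xi)|^2}{|\xi|^4}\,d\xi=\int_{\R^2}|\xi|^{2m}|\widehat{\theta}(\xi)|^2\,d\xi\le\|\theta\|_{H^m}^2 ,
\end{equation*}
which is the asserted estimate.

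The one point demanding care is that $\widehat{\partial^{\alpha}u}$ is genuinely the $L^2$ function $(i\xi)^{\alpha}\widehat{G}\,\widehat{\theta}$, with no leftover distribution supported at $\{0\}$. Because $G$ grows only logarithmically at infinity, $\widehat{G}$ differs from the function $\xi_1\xi^{\perp}/|\xi|^4$ only by a multiple of $\delta_0$, and multiplication by the polynomial $(i\xi)^{\alpha}$ annihilates that term once $|\alpha|\ge1$; here $|\alpha|=m+2\ge3$, and this is exactly where the hypothesis $m\ge1$ is used. (The factor $\widehat{\theta}$ causes no difficulty: $\theta\in L^1$ makes it continuous and bounded, while $\theta\in H^m$ makes the final integral finite.) I do not expect this to be a real obstacle; should one wish to avoid the Fourier side altogether, an alternative is to start from the representation $\nabla\nabla u=\theta\,\bm{\mathrm{E}}+\tfrac{1}{4\pi}T\theta$ of Proposition \ref{prop:DDom}, with $T$ Calder\'{o}n--Zygmund, apply $\partial^{\alpha}$ with $|\alpha|=m$, commute it past the convolution operator $T$ (legitimate since $D^{m}\theta\in L^2$), and invoke the $L^2$-boundedness of $T$; this gives the estimate up to a universal constant, which the symbol computation above pins down as $\le1$.
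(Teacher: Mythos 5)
Your estimate is correct, but you reach it by a genuinely different route than the paper. The paper never goes to the Fourier side: it observes that $u=G*\theta$ solves the Stokes-type balance $-\Delta u+\nabla\Pi=\theta e_2$ with $\Delta\Pi=\partial_2\theta$, applies $D^{m}$, tests with $D^{m}(-\Delta u)$ so that the pressure term drops by incompressibility, and then integrates by parts twice to trade $\|D^{m}\Delta u\|_{L^2}$ for $\|D^{m+2}u\|_{L^2}$ — an energy argument entirely in physical space. Your proof is the Plancherel/multiplier version of the same fact: the bound $|(i\xi)^{\alpha}\widehat{G}(\xi)|\le|\xi|^{m}$ is exactly the statement $|\xi|^{2}|\widehat{G}(\xi)|=|\xi_1|/|\xi|\le 1$, i.e.\ that $-\Delta u$ is (a component of) the Leray projection of $\theta e_2$. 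What your route buys is transparency of the constant (manifestly $\le 1$, in fact with the sharper factor $|\xi_1|/|\xi|$) and no need to invoke the pressure at all; what the paper's route buys is that it sidesteps the one delicate point you correctly flag, namely the meaning of $\widehat{G}$ for the logarithmically growing kernel and of the product $\widehat{G}\,\widehat{\theta}$ when $\widehat{\theta}$ is merely bounded and continuous. On that point your discussion is adequate but slightly loose: the function $\xi_1\xi^{\perp}/|\xi|^{4}$ is homogeneous of degree $-2$ in $\R^{2}$ and hence not locally integrable, so ``$\widehat{G}$ differs from it by a multiple of $\delta_0$'' should be read with the principal-value/finite-part regularization understood, and the identity $\widehat{\partial^{\alpha}u}=(i\xi)^{\alpha}\widehat{G}\,\widehat{\theta}$ is most cleanly justified by approximating $\theta$ by Schwartz functions (or by phrasing the multiplier at the level of $-\Delta u=\mathbb{P}(\theta e_2)$, which is in effect what the paper does). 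With that routine regularization the argument is sound.
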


\begin{proof}  
 We first observe that $u = G * \theta$ is a weak solution to
\begin{equation}
\label{eq:unpde}
\begin{aligned}
  -\Delta u   + \nabla \Pi &=  \theta e_2, \\
\end{aligned}
\end{equation}
where $\Pi$ is given by $\Delta\Pi = \partial_2 \theta$. Then, we apply $D^{m}$ to the above equation and take the $L^{2}$ inner product
with $D^{m} (-\Delta u)$ to arrive at
\begin{equation}
\begin{aligned}
\int_{\R^2} (D^{m} ( \Delta u)) \cdot 
D^{m} (\Delta u)  \,  d x=\ 
  \int_{\R^2} (D^{m} ( \theta e_2)) \cdot 
  D^{m} (\theta e_2)  \,  d x,
\end{aligned}
\end{equation}
whereby integration by parts twice gives
\begin{equation}
   \|D^{m+2} u \|_{L^2} = \|D^{m}\theta \|_{L^2}.
\end{equation}
\end{proof}
For higher regularity initial data, we show the limit $\theta$ indeed
solves our system. 
The  precompactness for $\theta^{n}$ we require follows from $H^{m}$-energy estimates, where
the $H^{0}$ estimate is immediate from Estimate \ref{eq:seqLpT} in
Proposition \ref{prop:sequni}.  

\begin{prop}
  \label{prop:highnorm}
  Let $\theta_0 \in Y \cap H^{m}$ for some integer $m > 2$.
 Then, the sequence $\theta^{n}$ satisfies
  the uniform estimate
\begin{equation*}
  \|\theta^{n}(t)\|_{H^{m}} \leq C_{m} \|\theta_0\|_{H^{m}} \exp(C_m
  \exp(C_L |t|)),
\end{equation*}
where $C_{m}$ depends only on $m$ and  $\theta_0$.
\end{prop}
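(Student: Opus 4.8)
The plan is to prove the estimate by induction on $n$, using an $H^m$ energy estimate for the linear transport equation \eqref{tneq}. First I would apply $D^m$ (meaning the full collection of order-$m$ derivatives) to \eqref{tneq} to obtain
\begin{equation*}
\left(\frac{\partial}{\partial t} + u^{n-1}\cdot\nabla\right) D^m \theta^n
= -\sum_{1 \leq |\beta| \leq m} \binom{m}{\beta}\, D^\beta u^{n-1} \cdot \nabla D^{m-\beta}\theta^n,
\end{equation*}
then take the $L^2$ inner product with $D^m\theta^n$. Since $u^{n-1}$ is divergence-free, the transport term drops out after integration by parts, giving
\begin{equation*}
\frac{d}{dt}\|\theta^n(t)\|_{H^m}^2 \leq C_m \sum_{1 \leq |\beta| \leq m} \|D^\beta u^{n-1}\, D^{m+1-\beta}\theta^n\|_{L^2}\,\|\theta^n\|_{H^m}.
\end{equation*}
The sum is estimated by the standard commutator/product (Moser-type) inequality in $\R^2$: each term is bounded by $\big(\|\nabla u^{n-1}\|_{L^\infty}\|D^m\theta^n\|_{L^2} + \|D^m u^{n-1}\|_{L^2}\|\nabla\theta^n\|_{L^\infty}\big)$, up to constants depending on $m$. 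This reduces the right-hand side to
\begin{equation*}
\frac{d}{dt}\|\theta^n(t)\|_{H^m} \leq C_m\big(\|\nabla u^{n-1}\|_{L^\infty}\|\theta^n\|_{H^m} + \|D^m u^{n-1}\|_{L^2}\|\nabla\theta^n\|_{L^\infty}\big).
\end{equation*}

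Next I would feed in the uniform bounds already available. By Proposition \ref{prop:sequni} (third estimate), $\|\nabla u^{n-1}(t)\|_{L^\infty} \leq C_L$ uniformly in $n$ and $t$. By Lemma \ref{lem:seqapriori} applied with $m$ replaced by $m-2$ (valid since $m > 2$, so $m-2 \geq 1$), $\|D^m u^{n-1}\|_{L^2} \leq \|\theta^{n-1}\|_{H^{m-2}} \leq \|\theta^{n-1}\|_{H^m}$. And by Proposition \ref{prop:unigradt} with $p = \infty$ (using the Sobolev embedding $H^{m-1}(\R^2) \hookrightarrow L^\infty$, or directly the $p=\infty$ statement there), $\|\nabla\theta^n(t)\|_{L^\infty} \leq \|\nabla\theta_0\|_{L^\infty}\exp(C_L t) \leq C\,\|\theta_0\|_{H^m}\exp(C_L t)$. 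Substituting,
\begin{equation*}
\frac{d}{dt}\|\theta^n(t)\|_{H^m} \leq C_m C_L \|\theta^n(t)\|_{H^m} + C_m \|\theta_0\|_{H^m}\exp(C_L t)\,\|\theta^{n-1}(t)\|_{H^m}.
\end{equation*}

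Now I would close the induction. Set $y_n(t) = \|\theta^n(t)\|_{H^m}$ and assume inductively that $y_{n-1}(t) \leq C_m\|\theta_0\|_{H^m}\exp(C_m\exp(C_L|t|))$; the base case $n=0$ is immediate since $\theta^0 \equiv \theta_0$. Then the differential inequality for $y_n$ has the linear form $y_n' \leq a(t) y_n + b(t)$ with $a(t) = C_m C_L$ bounded and $b(t) = C_m\|\theta_0\|_{H^m}\exp(C_L t)\, y_{n-1}(t)$, and Grönwall's lemma yields $y_n(t) \leq \big(y_n(0) + \int_0^t b(s)\,ds\big)\exp(C_m C_L t)$. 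The integral $\int_0^t \exp(C_L s)\exp(C_m\exp(C_L s))\,ds$ is comparable to $\exp(C_m\exp(C_L t))$ itself (differentiating the latter produces exactly this integrand up to constants), so absorbing constants into $C_m$ gives $y_n(t) \leq C_m\|\theta_0\|_{H^m}\exp(C_m\exp(C_L|t|))$, completing the induction. The same argument runs for negative times with $|t|$ in place of $t$.

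The main obstacle is bookkeeping the double-exponential growth cleanly: one must verify that the Grönwall integral of $\exp(C_L s)\exp(C_m\exp(C_L s))$ does not produce a worse-than-double-exponential bound and that the constant $C_m$ can be chosen uniformly in $n$ (i.e., the induction does not lose a factor at each step). The resolution is that $\frac{d}{ds}\exp(C_m\exp(C_L s)) = C_m C_L \exp(C_L s)\exp(C_m\exp(C_L s))$, so the integrand is an exact derivative up to the fixed constant $C_m C_L$; thus the integral is bounded by $C_m^{-1}C_L^{-1}\exp(C_m\exp(C_L t))$, and after multiplying by the prefactor $C_m\|\theta_0\|_{H^m}\exp(C_L t)$ and the Grönwall factor $\exp(C_m C_L t)$ and enlarging $C_m$ once (independently of $n$), the bound reproduces itself. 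A secondary technical point is ensuring the Moser inequality is applied in a form valid on all of $\R^2$ for the $L^\infty$ and $L^2$ norms simultaneously; this is standard since $m > 2$ guarantees the needed embeddings $H^{m-1} \hookrightarrow L^\infty$ and $H^{m-1} \hookrightarrow W^{1,\infty}$ fail to be sharp only at the endpoint, which we avoid.
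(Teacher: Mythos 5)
Your energy estimate coincides with the paper's: apply derivatives to \eqref{tneq}, use that $u^{n-1}$ is divergence-free, bound the commutator by $\|\nabla u^{n-1}\|_{L^{\infty}}\|D^{m}\theta^{n}\|_{L^{2}}+\|D^{m}u^{n-1}\|_{L^{2}}\|\nabla\theta^{n}\|_{L^{\infty}}$, and feed in Propositions \ref{prop:sequni}, \ref{prop:unigradt} and Lemma \ref{lem:seqapriori}. The gap is in how you close. After discarding the smoothing gain (replacing $\|\theta^{n-1}\|_{H^{m-2}}$ by $\|\theta^{n-1}\|_{H^{m}}$) you are left with the recursion $y_n'\le C_mC_L\,y_n+C_m\|\theta_0\|_{H^m}e^{C_Lt}\,y_{n-1}$ and you try to propagate the ansatz $y_{n-1}(t)\le C_m\|\theta_0\|_{H^m}\exp(C_m e^{C_Lt})$ by Gr\"onwall. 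This does not reproduce itself with an $n$-independent constant: Gr\"onwall returns $y_n(t)\le e^{C_mC_Lt}\bigl[\|\theta_0\|_{H^m}+(\mathrm{const})\exp(C_me^{C_Lt})\bigr]$, and the multiplicative factor $e^{C_mC_Lt}$ cannot be absorbed into $\exp(C_me^{C_Lt})$ with the \emph{same} $C_m$; the constant must be enlarged, and the enlarged constant is then what enters the hypothesis at the next step, so it degrades with $n$ (roughly like $2^nC_m$). Equivalently, for any fixed $A$ the self-consistency inequality $e^{C_mC_Lt}\bigl(\|\theta_0\|_{H^m}e^{-Ae^{C_Lt}}+\mathrm{const}/(AC_L)\bigr)\le A\|\theta_0\|_{H^m}$ fails for large $t$; the actual fixed point of your recursion is of the form $e^{C_mC_Lt}\exp\bigl(C\,e^{(C_L+C_mC_L)t}\bigr)$, a double exponential with strictly larger inner rate than the stated $\exp(C_m e^{C_L|t|})$. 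So ``enlarging $C_m$ once'' does not rescue the induction in $n$, and the claimed $n$-uniform, global-in-time bound is not established by this argument.

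The fix is precisely the ingredient you computed and then threw away: Lemma \ref{lem:seqapriori} bounds $\|D^{m}u^{n-1}\|_{L^{2}}$ by $\|\theta^{n-1}\|_{H^{m-2}}$, a strictly lower norm, and this is what the paper retains (its final inequality couples $\|\theta^{n+1}\|_{H^m}$ to $\|\theta^{n}\|_{H^{m-1}}$ and ``iterates''). Proceed by induction on the regularity index rather than on $n$: the $H^1$ (and $H^2$) norms of $\theta^n$ are bounded uniformly in $n$ by single exponentials via Propositions \ref{prop:sequni} and \ref{prop:unigradt}; once $\sup_n\|\theta^n(t)\|_{H^{m'-1}}$ is controlled, the differential inequality at level $m'$ is linear with an inhomogeneity that no longer involves the unknown sequence at the same level, so Gr\"onwall closes for each $n$ with one and the same bound---no self-referential induction in $n$, no degradation of constants---and the result is comfortably of the stated double-exponential form.
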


\begin{proof}
Further, we apply the operator
$D^{\alpha}$ with multi-index $\alpha$ to (\ref{tneq}), multiply by $D^{\alpha} \theta^{n+1}$,
integrate over $\R^2$, and sum over $|\alpha | \leq m$ to discover  
\begin{equation}
\label{eq:dynHs}
\begin{aligned}
  \frac{1}{2} \frac{d}{dt} \|\theta^{n+1} \|^2_{H^{m}} &= -
\sum_{|\alpha|\leq m} \int_{\R^2} (D^{\alpha} ( u^{n} \cdot \nabla
\theta^{n+1})) D^{\alpha} \theta^{n+1}  \,  d x \\
                                                       &= -
                                                       \sum_{|\alpha|
                                                       \leq m }
                                                       \int_{\R^2}
                                                       (D^{\alpha}(u^{n}\cdot
                                                       \nabla
                                                       \theta^{n+1}) -
                                                       u^{n}\cdot \nabla
                                                       (D^{\alpha}
                                                       \theta^{n+1}))
                                                       D^{\alpha}
                                                       \theta^{n+1} \,
                                                       dx, 
\end{aligned}
\end{equation}
since $u^{n}$ is divergence-free. Using calculus inequalities (e.g. see 
\cite[Lemma 3.4]{majda2002vorticity}) gives us that
\begin{equation}
\label{eq:Hsineq}
  \begin{aligned}
    \frac{1}{2} \frac{d}{dt} \|\theta^{n+1}\|_{H^{m}} &\leq
    \sum_{|\alpha| \leq m} \|
    D^{\alpha}(u^{n}\cdot \nabla \theta^{n+1}) - u^{n} \cdot \nabla
    (D^{\alpha} \theta^{n+1})\|_{L^2} \\
          &\leq C_{m} \Big( \|\nabla u^{n}\|_{L^{\infty}} \|D^{m}\theta^{n+1}\|_{L^2} + \|D^{m} u^{n} \|_{L^2}\|\nabla \theta^{n+1}\|_{L^{\infty}}\Big).
  \end{aligned}
\end{equation}
With  Proposition \ref{prop:unigradt}, Lemma \ref{lem:seqapriori}, and the
embedding $H^{m} \subset L^{\infty}$ for $m > 1$, we find 
\begin{equation}
\label{eq:dynmollHs}
\frac{d}{dt} \|\theta^{n+1} \|_{H^{m}} \leq  C_{m} (\| D^{m} \theta^{n+1
} \|_{L^2} + \exp(C_{L}|t|)\|\theta^{n}\|_{H^{m-1}} ),
\end{equation}
where $C_m$ depends only on $m$ and $\theta_0 \in  Y \cap H^{m}$.
The result follows from iteration.
\end{proof}
\begin{prop}
  \label{prop:smoothsoln}
  Let $\theta_0 \in Y \cap H^{m}$ for some integer $m > 2$,
  and suppose that $\supp \theta_0 \subset
  B(0,R_0).$ Then for arbitrary $T$, there
  exists a unique classical solution
  $$\theta \in C^{1}(0, T;
    C^{1}_0),$$
  to system (\ref{eq:Bs}). Moreover, $\theta$ satisfies Estimates
  \ref{eq:LpT}, \ref{eq:suppT} and \ref{eq:cUT} in
  Theorem \ref{thm:well}.
\end{prop}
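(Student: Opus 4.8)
The plan is to realize the classical solution as the limit of the iterates $\theta^n$ from (\ref{tneq}), leaning on the uniform high-regularity bound of Proposition~\ref{prop:highnorm} together with the strong low-regularity convergence of Proposition~\ref{prop:L2}. The hypotheses $\theta_0 \in Y \cap H^m$ with $m>2$ and $\supp\theta_0 \subset B(0,R_0)$ certainly imply $\theta_0 \in H^1 \cap Y$, so the sequence $\{\theta^n\}$ is well-defined and carries the uniform estimates of Propositions~\ref{prop:sequni}, \ref{prop:unigradt} and \ref{prop:highnorm}. First I would observe that the bound (\ref{eq:middlen}) is uniform on $[0,T]$, so $\theta^n \to \theta$ in $C([0,T];L^2)$; interpolating $\|\,\cdot\,\|_{H^{m'}} \leq \|\,\cdot\,\|_{L^2}^{1-m'/m}\|\,\cdot\,\|_{H^m}^{m'/m}$ against the uniform $H^m$-bound upgrades this to convergence in $C([0,T];H^{m'})$ for every $m' \in [0,m)$. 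Fixing $m' \in (2,m)$ and using the Sobolev embedding $H^{m'}(\R^2)\hookrightarrow C^{1+\mu}_0(\R^2)$ (for $\mu < \min\{1,m'-2\}$) gives $\theta^n \to \theta$ in $C([0,T];C^{1+\mu}_0)$, while weak lower semicontinuity places $\theta \in L^{\infty}(0,T;H^m)$ with the bound of Proposition~\ref{prop:highnorm}.

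Next I would pass to the limit in the velocity. Since all $\theta^n(t)$ are supported in the common ball $B(0,R(T))$ for $t \leq T$, the $C([0,T];L^2)$-convergence together with $H^{m'}\hookrightarrow L^{\infty}$ gives $\theta^n - \theta \to 0$ in $C([0,T];Y)$; then Lemma~\ref{lem:key} yields $\nabla u^n \to \nabla u$ in $C([0,T];C^{\mu})$ and Lemma~\ref{lem:pointu} yields $u^n \to u$ uniformly on $\ov{B(0,R(T))}$. Hence the right-hand side $-u^n\cdot\nabla\theta^{n+1}$ of (\ref{tneq}) converges in $C([0,T];C^0_0)$ to $-u\cdot\nabla\theta$, and since $\theta^{n+1}\to\theta$ uniformly, $t \mapsto \theta(t)$ is differentiable with $\partial_t\theta = -u\cdot\nabla\theta$, the right side being jointly continuous in $(x,t)$. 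As $u = G*\theta$ is exactly the velocity solving $-\Delta u + \nabla\Pi = \theta e_2$, $\div u = 0$ (with $\Pi$ recovered from $\Delta\Pi = \partial_2\theta$), the pair $(\theta,u)$ is a classical solution of (\ref{eq:Bs}); the membership $\theta \in C^1(0,T;C^1_0)$ is then read off from $\theta \in C([0,T];C^{1+\mu}_0)$ and $\partial_t\theta = -u\cdot\nabla\theta$, with further smoothness obtained by re-running the argument at larger $m$ when $\theta_0$ is smoother. Estimates (1)--(3) of Theorem~\ref{thm:well} transfer to $\theta$ exactly as recorded in Proposition~\ref{prop:L2}.

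For uniqueness I would reuse the $L^2$-stability computation already present in the proof of Proposition~\ref{prop:L2}. Given two classical solutions $\theta^{(1)},\theta^{(2)}$ with the same initial data, set $\ov\theta = \theta^{(1)} - \theta^{(2)}$ and $v = G*\ov\theta$; the difference equation, after an $L^2$ inner product with $\ov\theta$ and integration by parts, produces $\frac{d}{dt}\|\ov\theta(t)\|_{L^2}^2 = -2\int_{\R^2} v\cdot\nabla\langle\theta\rangle\,\ov\theta\,dx$, where Cauchy--Schwarz combined with Lemma~\ref{lem:pointu} and the common support bound gives $\|v(t)\|_{L^{\infty}} \leq C_T\|\ov\theta(t)\|_{L^2}$, and $\|\nabla\theta^{(i)}(t)\|_{L^2}$ is controlled by Estimate~(1) and Proposition~\ref{prop:unigradt}. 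This yields $\frac{d}{dt}\|\ov\theta(t)\|_{L^2}^2 \leq C_T\|\ov\theta(t)\|_{L^2}^2$, so Gr\"onwall's lemma and $\ov\theta(0)=0$ force $\ov\theta \equiv 0$ on $[0,T]$.

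I expect the main obstacle to be the first step: converting the distributional limit into a genuine classical solution with the stated continuity in time. The saving feature is that (\ref{eq:middlen}) already supplies \emph{uniform-in-$t$} $L^2$-convergence, so the interpolation is clean and no delicate Bona--Smith-type argument for $H^m$-time-continuity is needed once one is content to lose a few derivatives below $H^m$ (which is harmless here since the target regularity is only $C^1$); what remains is bookkeeping with the common support control and Lemmas~\ref{lem:pointu}--\ref{lem:key}.
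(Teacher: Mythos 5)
Your proposal is correct and follows essentially the same route as the paper: realize the classical solution as the limit of the iterates $\theta^{n}$, combining the uniform $H^{m}$ bound of Proposition~\ref{prop:highnorm} with the $L^{2}$ convergence of Proposition~\ref{prop:L2}, then pass to the limit in \eqref{tneq} using the common support, Lemma~\ref{lem:pointu} and Lemma~\ref{lem:key}. The only differences are cosmetic refinements — you upgrade the full sequence via interpolation where the paper extracts subsequences, and you fold in the $L^{2}$ uniqueness argument that the paper defers to the proof of Theorem~\ref{thm:well}.
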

\begin{proof}
Because of Proposition \ref{prop:highnorm}, the sequence $\theta^n(t)$ has a subsequence $\theta^{n_j}(t)$ which converges strongly in $H^m$ to the limit $\theta(t)$ in Proposition \ref{prop:L2}, for $0\leq t \leq T$. As $m >2$, the continuous embedding
\begin{equation}
H^{m+k} \subset C^{k}_0 \equiv \{\text{space of }C^k\text{ functions vanishing at infinity} \}
\end{equation}
implies $\theta^{n_j}(t)$ converges strongly in $C^1_0$, and $\theta(t) \in C^1_0$ satisfies Estimates
  \ref{eq:LpT}, \ref{eq:suppT} and \ref{eq:cUT}.

Towards verifying $\theta$ is a solution, we define $u = G * \theta$ and observe that
$\theta^n_j\in Y$ and $\theta \in Y$ are both supported in $B(0,R)$ where $R = R(t)$ is given by Estimate \ref{eq:suppT}. At each instance of time,  Lemma \ref{lem:pointu} implies
\begin{equation}
|u^{n_j}(x) -  u(x)|  \leq C (1 + \log (|x| + R + 1)) \|\theta^{n_j} - \theta \|_{Y}.
\end{equation}
Then, we may conclude $u^{n_j} \to u$ pointwise via the continuous embedding $L^1_{\mathrm{loc}} \subset H^1$. 
Estimate \ref{eq:estthree} and Azelà-Azcoli imply the convergence is uniform in $B(0,R)$.  

Similarly, we have also $\theta^{n_k-1}$ converging strongly to $\theta$ in $C^1_0$ for each fixed time, where $n_k - 1$ indexes a subsequence of $\theta^{n_j-1}$. It then follows that $u^{n_k-1}(t)  \cdot \nabla \theta^{n_k}(t)$ converges uniformly. Pointwise in space and time, we have
\begin{equation}
\label{eq:tneq2}
  \partial_{t} \theta^{n_k} = -(u^{n_k-1}) \cdot \nabla \theta^{n_k},
\end{equation}
and thus $\partial_{t} \theta^{n_k}$ converges strongly to $-u\cdot \nabla \theta$
in $C(0,T; C_0)$. The distributional limit of $\partial_{t}
\theta^{n_k}$ is accordingly $\partial_{t} \theta$; therefore $\theta$ is a classical
solution to system (\ref{eq:Bs}).
\end{proof}
\begin{proof}[Proof of Theorem \ref{thm:well}]
  Existence with the estimates follows from Proposition \ref{prop:smoothsoln} using the arguments in the classical paper of Yudovich \cite{yudovich1963non},
  basically unmodified. Here, we prove uniqueness using the estimates from
  Lemma \ref{lem:key}. Suppose with initial data $\theta_0 \in Y$ of
compact support we have two
weak solutions, $\theta_1$ and $\theta_2$. It follows that the difference  $\vartheta = \theta_1 -
\theta_2$  obeys the equations
\begin{equation}
\label{eq:avg}
\left( \frac{\partial}{\partial t} + \langle u \rangle \cdot \nabla  \right) \vartheta =
- v \cdot \nabla \langle \theta \rangle
\end{equation}
in the sense of distributions, where $v(t) = G * \vartheta(t)$ and
\begin{equation}
\begin{aligned}
\langle u  \rangle &= \frac{1}{2}(G * \theta_1 + G * \theta_2),  \\
\langle \theta \rangle &= \frac{1}{2}(\theta_{1} + \theta_{2}).
\end{aligned}
\end{equation}
At each instance of time, we have $\vartheta(t) \in Y$ and clearly 
 \begin{equation}
\supp \vartheta (t) \subset \supp \theta_1(t) \cup \supp \theta_2(t),
\end{equation}
since $\theta_1$ and $\theta_2$ obey the estimates. Through  Lemma \ref{lem:key}, the
 velocities $\langle u \rangle$ and $v$ have gradients
bounded uniformly in time with constant depending only on
$\|\theta_1\|_{Y} + \|\theta_2\|_{Y}$. We multiply equation
(\ref{eq:avg}) with $\vartheta$ and integrate by parts to discover
\begin{equation}
\frac{d}{dt}\|\vartheta \|_{L^2}^2 = 0,
\end{equation}
since $\langle u \rangle$ and $v$ are divergence-free and regular
enough. Recalling that $\| \vartheta(0)\|_{L^2} = 0$, we have proved uniqueness.
\end{proof}

\section{Proof of Theorem \ref{thm:reg}}\label{sec:reg}
We consider  the dynamics of the patch solution 
$\theta(t) = \bm{1}_{P(t)}$ given by Corollary \ref{cor:patch} and
address here the question
of regularity for the evolving boundary $\partial P(t).$ 

First, we note
that $\theta$ must necessarily satisfy the estimates in Theorem
\ref{thm:well}. 
The particle trajectories are volume-preserving such that the area of $P(t)$
is constant in time, and so we define the length scale
 \begin{equation}
   L = \sqrt{ \text{area}(P_0)} = \sqrt{ \text{area}(P(t))}.
\end{equation}
Then, $u = G * \bm{1}_{P}$ has the uniform bound in time:
\begin{equation}
\label{eq:controlL}
\|\nabla u (t) \|_{C^{\mu}} \leq C_{\mu} C_{L}
\end{equation}
where $C_{L} = 1 + L^2$ depends only on the initial data.

The regularity result for the distribution $\bm{1}_{P}$ follows from
analysis of  gradients of the defining level-set function $\varphi(t)$ satisfying
(\ref{eq:patchphi}), which is the unique global solution to the Cauchy problem 
\begin{equation}
\label{eq:cauchyphiic}
\left\{\begin{aligned}
\left(\frac{\partial}{\partial t} + u \cdot \nabla \right) \varphi &= 0, \\
  \varphi (x,0) &= \varphi_0(x),
\end{aligned}\right.
\end{equation}
where $\varphi_0$ satisfies (\ref{eq:patchphi0}).
Above, the fluid velocity $u = G * \bm{1}_{P}$ has the convenient expression 
using $\varphi$,
\begin{equation}
\label{eq:patchu}
u(x,t) = \int_{\R^2} G (x - y) H(\varphi(y,t)) \, dy,
\end{equation}
where $H$ is the Heaviside function. By construction, $P(t)$ is
precisely the zero level-set of $\varphi(t)$ such that
\begin{equation}
\bm{1}_{P(t)} = H \circ \varphi(t),
\end{equation}
and so our regularity
results for the boundary rely on an analysis of $\W = \nabla^\perp \varphi$, which is tangent to $\partial P$. Recalling the evolution equation (\ref{eq:weq}) for $\W(t)$, we must estimate the product $\nabla u \,\W$.

We provide an additional observation in estimating $|\cdot|_\mu$: the singular kernel
$G$ away from the origin has gradient $\nabla G$, with homogeneity of
degree $-1$, such that for each instance of time 
\begin{equation}
  \nabla u(x) =  \mathrm{pv} \int_{P} \nabla G (x -
  y) \, dy,
\label{naUintP}
\end{equation}
and so we write $\nabla u = \nabla G * \bm{1}_{P}$. 
Since the degree in modulus is less than the dimension, integration by parts yields
\begin{equation}
\label{eq:} 
      \mathrm{pv} \int_{P} \nabla \lbrack G(x -
      y) \rbrack \cdot \W(y) \, dy  = 0
\end{equation}
for divergence-free $\W$ tangent to the boundary.  Thus, we have commutative
structure for the kernel $\nabla G(z)$,
\begin{equation}
\nabla u(x) \, \W = (\nabla G \, \W ) * \bm{1}_{P},
\end{equation}
and following estimate:
\begin{lemma}
  \label{lem:com1}
  Suppose $u = G * \bm{1}_{P}$ and that $\W \in C^{\mu}(\R^2,\R^2)$ is divergence free
  and tangent to $\partial P$. Then, there exists independent constant $C_2$ such
  that
  \begin{equation*}
     \lvert \nabla u \W \rvert_{\mu} \leq C_2 \|\nabla  u \|_{L^{\infty}} \lvert \W \rvert_{\mu}.
  \end{equation*}
\end{lemma}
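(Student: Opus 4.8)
The plan is to build on the cancellation structure isolated just before the statement. Since $\W$ is divergence free and tangent to $\partial P$, integration by parts gives $\mathrm{pv}\int_{P}\nabla G(x-y)\,\W(y)\,dy=0$ for every $x$, and subtracting this from the defining formula $\nabla u(x)\W(x)=\int_{P}\nabla G(x-y)\,\W(x)\,dy$ represents the product as the absolutely convergent integral
\begin{equation*}
\nabla u(x)\,\W(x)=\int_{P}\nabla G(x-y)\bigl(\W(x)-\W(y)\bigr)\,dy ,
\end{equation*}
the convergence being clear because $\nabla G$ is homogeneous of degree $-1$, hence locally integrable, while $\W\in C^{\mu}$. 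The gain is that the merely bounded factor $\W(x)$ has been replaced by the H\"older-small increment $\W(x)-\W(y)$; this is the geometric mechanism of \cite{Bertozzi1993GlobalRF}, applied here to the degree $-1$ kernel $\nabla G$ rather than to a Calder\'on--Zygmund kernel.

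Granting this representation, I would estimate $\lvert\nabla u\,\W\rvert_{\mu}$ directly. Fix $x\neq x'$ and set $\delta=\lvert x-x'\rvert$. Writing $\nabla u(x)\W(x)-\nabla u(x')\W(x')$ as $\nabla u(x')\bigl(\W(x)-\W(x')\bigr)$ plus $\int_{P}\bigl(\nabla G(x-y)-\nabla G(x'-y)\bigr)\bigl(\W(x)-\W(y)\bigr)\,dy$, the first summand is at once bounded by $\|\nabla u\|_{L^{\infty}}\lvert\W\rvert_{\mu}\delta^{\mu}$; this term alone carries the $\|\nabla u\|_{L^{\infty}}$ factor, and no H\"older seminorm of $\nabla u$ is ever needed. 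For the integral I would split $P$ into the near part $P\cap B(x,2\delta)$ and the far part $P\setminus B(x,2\delta)$. On the near part, use $\lvert\nabla G(x-y)-\nabla G(x'-y)\rvert\leq C\bigl(\lvert x-y\rvert^{-1}+\lvert x'-y\rvert^{-1}\bigr)$ together with $\lvert\W(x)-\W(y)\rvert\leq\lvert\W\rvert_{\mu}\lvert x-y\rvert^{\mu}$ and integrate. On the far part, $\lvert x-y\rvert\geq2\delta$ forces $\lvert x'-y\rvert\geq\tfrac12\lvert x-y\rvert$, so the mean value theorem and $\lvert\nabla\nabla G(z)\rvert\leq C\lvert z\rvert^{-2}$ give $\lvert\nabla G(x-y)-\nabla G(x'-y)\rvert\leq C\delta\lvert x-y\rvert^{-2}$; paired with $\lvert\W(x)-\W(y)\rvert\leq\lvert\W\rvert_{\mu}\lvert x-y\rvert^{\mu}$ and integrated over the bounded set $P$, this is again $\leq C\lvert\W\rvert_{\mu}\delta^{\mu}$. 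Summing the three contributions gives the asserted $\lvert\nabla u\,\W\rvert_{\mu}\leq C_{2}\|\nabla u\|_{L^{\infty}}\lvert\W\rvert_{\mu}$; the case of large $\delta$ is immediate since $\W$ has compact support (inherited from $\varphi_{0}\in C_{c}^{\infty}$ in Corollary \ref{cor:patch}) and $P$ is bounded, so $\nabla u\,\W$ vanishes off a fixed bounded set and $\|\W\|_{L^{\infty}}\leq\lvert\W\rvert_{\mu}\delta^{\mu}$ once $\delta$ exceeds its diameter.

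The main obstacle is not the singular-integral estimates, which are routine once the desingularized representation is in hand, but organizing the decomposition so that \emph{only} $\|\nabla u\|_{L^{\infty}}$ and $\lvert\W\rvert_{\mu}$ survive on the right. A residual $\|\W\|_{L^{\infty}}$ term, or a residual H\"older seminorm $\lvert\nabla u\rvert_{\mu}$, would reintroduce the double-exponential bound $\exp(C_{L}\exp(C_{L}\lvert t\rvert))$ for $\lvert\W(t)\rvert_{\mu}$ recorded before the statement, rather than the clean $\exp(C_{L}\lvert t\rvert)$ bound the lemma is meant to deliver. The cancellation identity is exactly what suppresses both: the part of $\nabla u$ responsible for the naive product-rule loss is annihilated when contracted against the tangent, divergence free field $\W$, leaving only the degree $-1$ kernel to be measured against the H\"older modulus of $\W$.
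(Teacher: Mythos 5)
Your first half reproduces the paper's own route: the integration-by-parts cancellation for divergence-free $\W$ tangent to $\partial P$, the resulting desingularized representation $\nabla u(x)\W(x)=\int_P\nabla G(x-y)\,(\W(x)-\W(y))\,dy$, and the splitting of the increment into $\nabla u(x')(\W(x)-\W(x'))$ plus a kernel-difference integral are exactly the identity the paper sets up before the lemma and the Bertozzi--Constantin scheme it then invokes \cite{Bertozzi1993GlobalRF}. The gap is in the quantitative step, where you have transplanted the Calder\'on--Zygmund (degree $-2$) computation to the degree $-1$ kernel $\nabla G$. With $\lvert\nabla G(x-y)-\nabla G(x'-y)\rvert\leq C\delta\lvert x-y\rvert^{-2}$ and $\lvert\W(x)-\W(y)\rvert\leq\lvert\W\rvert_{\mu}\lvert x-y\rvert^{\mu}$, the far-field integral over $P\setminus B(x,2\delta)$ is $\leq C\delta\int_{2\delta}^{D}r^{\mu-1}\,dr\simeq C\mu^{-1}\delta D^{\mu}$, where $D$ is comparable to $\mathrm{diam}(P)$ (plus the distance of $x$ to $P$): the tail is dominated by the \emph{outer} radius, not by $\delta$, so it is $\leq C\delta^{\mu}$ only with a constant of order the size of $P$ (with a rearrangement in the conserved area one can get $C_{\mu}\sqrt{\mathrm{area}(P)}$, which at least is time-uniform; the diameter is not). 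The near part is likewise $C\lvert\W\rvert_{\mu}\delta^{1+\mu}$, and your large-$\delta$ argument imports compact support of $\W$ (not a hypothesis of the lemma, and with support diameter growing in time); a cleaner route is the pointwise bound $\|\nabla u\,\W\|_{L^{\infty}}\leq C_{\mu}\,\mathrm{area}(P)^{(1+\mu)/2}\lvert\W\rvert_{\mu}$, read off from the representation itself.

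More importantly, none of these contributions carries the factor $\|\nabla u\|_{L^{\infty}}$; only the term $\nabla u(x')(\W(x)-\W(x'))$ does. So ``summing the three contributions'' yields an additive estimate of the form $(\|\nabla u\|_{L^{\infty}}+C(\mu,P))\lvert\W\rvert_{\mu}$, not the asserted multiplicative bound $C_{2}\|\nabla u\|_{L^{\infty}}\lvert\W\rvert_{\mu}$ with an independent constant---which is precisely the feature you identify in your last paragraph as the point of the lemma. In the vortex-patch case the leftover constant is the amplitude $\lvert\omega_{0}\rvert$ and is absorbed because $\|\nabla v\|_{L^{\infty}}\geq\lvert\omega_{0}\rvert/2$; for the kernel $\nabla G$ there is no such ready lower bound on $\|\nabla u\|_{L^{\infty}}$, so to close the argument you must either show the extra constants are themselves dominated by $\|\nabla u\|_{L^{\infty}}$ (this needs a further idea, e.g.\ exploiting the oddness of $\nabla G$ in the far field), or settle for the additive form with an area-dependent constant---which would still feed a single-exponential bound into Proposition \ref{prop:c1}, but is not the statement you set out to prove.
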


Using the particle
trajectories of $u$, we derive from (\ref{eq:weq}) pointwise estimates on equation $\W$ (see
 \cite[Proposition 3]{Bertozzi1993GlobalRF}) wherein Gr\"{o}nwall's inequality yields
bounds for quantities \eqref{eq:quant1} and \eqref{eq:quant2} which depend exponentially on
\begin{equation}
\label{eq:Duquant}
\int_{0}^{t} \|\nabla u (s)\|_{L^{\infty}} \,  d s \leq C_{L} \lvert t\rvert.
\end{equation}
The particle trajectories allow us to deduce the
regularity
result: 
\begin{prop}
  \label{prop:c1}
  Suppose $P_0$ bounded and $\varphi_0 \in C^{1 + \mu} (\R^2) $
  in (\ref{eq:cauchyphiic}) such that $\lvert\W_0\rvert_{\inf} > 0$. Then, 
  the unique global
  solution $\varphi$  has $\W = \nabla^{\perp}
  \varphi$ which satisfies:
\begin{align*}
  &\lvert\W(t)\rvert_{\mu} \leq \lvert\W_0\rvert_{\mu} \exp
  ((C_2 + \mu)C_{L}\lvert t\rvert ), \\
  & \|\W(t)\|_{L^{\infty}} \leq \|\W_0\|_{L^{\infty}} \exp
  (C_{L} \lvert t\rvert),  \\
  &\lvert\W(t)\rvert_{\inf} \geq \lvert\W_0\rvert_{\inf} \exp
  (-C_{L} \lvert t\rvert).
\end{align*}
\end{prop}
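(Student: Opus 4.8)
The plan is to reduce all three bounds to Gr\"onwall estimates along the particle trajectories of $u$, following \cite[Proposition 3]{Bertozzi1993GlobalRF}. Let $X(a,t)$ be the flow map of the $C^1$ velocity $u = G * \bm{1}_{P}$, and for times $s,t$ write $\Phi_{t\to s} = X(\cdot,s)\circ X^{-1}(\cdot,t)$ for the map carrying a configuration at time $t$ to time $s$. Since $\varphi(t) = \varphi_0\circ X^{-1}(\cdot,t)$, the diffeomorphism $X(\cdot,t)$ restricts to a bijection of $\partial P_0$ onto $\partial P(t) = \{\varphi(\cdot,t) = 0\}$. Differentiating in space the ODE solved by $\tau\mapsto\Phi_{t\to\tau}$ and invoking (\ref{eq:Duquant}), one gets for all relevant $s$
\begin{equation*}
\|\nabla\Phi_{t\to s}\|_{L^{\infty}} \le \exp\!\Big(\Big|\int_s^t \|\nabla u(\tau)\|_{L^{\infty}}\, d\tau\Big|\Big) \le \exp(C_L\lvert t\rvert),
\end{equation*}
and the same for $X^{\pm1}(\cdot,t)$. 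Finally, (\ref{eq:weq}) says that along each trajectory $\W$ obeys the linear ODE $\tfrac{d}{dt}\W(X(a,t),t) = \nabla u(X(a,t),t)\,\W(X(a,t),t)$; equivalently, by Duhamel,
\begin{equation*}
\W(x,t) = \W_0(\Phi_{t\to 0}(x)) + \int_0^t \big[(\nabla u)\,\W\big]\big(\Phi_{t\to s}(x),s\big)\, ds.
\end{equation*}

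The $L^{\infty}$ and infimum bounds I would read off directly from this ODE. Since $\big|\tfrac{d}{dt}|\W(X(a,t),t)|\big| \le \|\nabla u(t)\|_{L^{\infty}}\,|\W(X(a,t),t)|$, Gr\"onwall together with (\ref{eq:Duquant}) gives the two-sided bound $|\W_0(a)|\,e^{-C_L|t|} \le |\W(X(a,t),t)| \le |\W_0(a)|\,e^{C_L|t|}$. Taking the supremum over $a\in\R^2$ gives the claimed bound on $\|\W(t)\|_{L^{\infty}}$; restricting to $a\in\partial P_0$, where $|\W_0(a)|\ge\lvert\W_0\rvert_{\inf}$, and using that $X(\cdot,t)$ maps $\partial P_0$ onto $\partial P(t)$, the lower half yields $\lvert\W(t)\rvert_{\inf}\ge\lvert\W_0\rvert_{\inf}\,e^{-C_L|t|}$.

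For the H\"older seminorm I would difference the Duhamel formula at two points $x\ne x'$ and use that, for a $C^1$ map $\Phi$, $|f\circ\Phi(x) - f\circ\Phi(x')| \le \lvert f\rvert_{\mu}\,\|\nabla\Phi\|_{L^{\infty}}^{\mu}\,|x-x'|^{\mu}$. Applied to the first term this contributes $\lvert\W_0\rvert_{\mu}\,\|\nabla\Phi_{t\to 0}\|_{L^{\infty}}^{\mu}$; applied to the integrands, together with the commutator estimate of Lemma \ref{lem:com1} for $(\nabla u)\,\W$, it contributes $C_2\,\|\nabla u(s)\|_{L^{\infty}}\,\lvert\W(s)\rvert_{\mu}\,\|\nabla\Phi_{t\to s}\|_{L^{\infty}}^{\mu}$. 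Setting $a(t) = \int_0^t\|\nabla u(s)\|_{L^{\infty}}\,ds$, dividing by $|x-x'|^{\mu}$ and using $\|\nabla\Phi_{t\to s}\|_{L^{\infty}}^{\mu}\le e^{\mu(a(t)-a(s))}$, one obtains
\begin{equation*}
\lvert\W(t)\rvert_{\mu}\,e^{-\mu a(t)} \le \lvert\W_0\rvert_{\mu} + C_2\int_0^t a'(s)\,\lvert\W(s)\rvert_{\mu}\,e^{-\mu a(s)}\, ds,
\end{equation*}
so Gr\"onwall yields $\lvert\W(t)\rvert_{\mu}\le\lvert\W_0\rvert_{\mu}\,e^{(C_2+\mu)a(t)}\le\lvert\W_0\rvert_{\mu}\exp\big((C_2+\mu)C_L|t|\big)$, which is the claim. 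The finiteness of $\lvert\W(t)\rvert_{\mu}$ needed to run this is not an issue, since $\varphi_0\in C^{1+\mu}$ and $\nabla u(t)\in C^{\mu}$ make the flow map $C^{1+\mu}$ in space, so $\W(t)\in C^{\mu}(\R^2)$; alternatively one runs the estimate on the smooth approximations from Section \ref{sec:well} and passes to the limit.

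The routine pieces are the two one-sided Gr\"onwalls and the composition estimate for the H\"older seminorm under a Lipschitz change of variables. The one genuinely delicate ingredient is Lemma \ref{lem:com1}: the elementary product rule $\lvert(\nabla u)\W\rvert_{\mu}\le\|\nabla u\|_{L^{\infty}}\lvert\W\rvert_{\mu}+\lvert\nabla u\rvert_{\mu}\|\W\|_{L^{\infty}}$ would still close the argument, but only with a right-hand side carrying $\|\W\|_{L^{\infty}}$ and the full $\|\nabla u\|_{C^{\mu}}\le C_{\mu}C_L$, hence a weaker estimate; the commutator structure (integrating the tangential derivative by parts against $\nabla G$) is exactly what produces the clean constant $(C_2+\mu)C_L$ with dependence on $\lvert\W_0\rvert_{\mu}$ alone. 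The extra $\mu$ in that exponent is not kernel-related at all but comes from the transport term --- it is the $\mu$-th power of the Lipschitz constant $\|\nabla\Phi_{t\to s}\|_{L^{\infty}}$ of the backward-then-forward flow --- so keeping track of that factor is the other point that needs care.
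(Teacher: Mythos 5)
Your proposal is correct and follows essentially the same route as the paper, which derives these bounds exactly as in \cite[Proposition 3]{Bertozzi1993GlobalRF}: pointwise Gr\"onwall estimates for $\W$ along the particle trajectories of $u$, with the exponent controlled by \eqref{eq:Duquant}, the $L^{\infty}$ and infimum bounds coming directly from \eqref{eq:weq}, and the $\mu$-seminorm bound using the commutator estimate of Lemma \ref{lem:com1} together with the $\mu$-th power of the flow-map Lipschitz constant. Your bookkeeping of the constants, including the $(C_2+\mu)C_L$ exponent, matches the statement.
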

\begin{proof}[Proof of Theorem \ref{thm:reg} for $\partial P_0 \in C^{1+\mu}$]
 Indeed, Proposition \ref{prop:c1} guarantees the desired $C^{1+\mu}$
 parametrization $\z$ in  (\ref{eq:parah})
 exists at each instance of time, if $\varphi_0$ in
 (\ref{eq:cauchyphi}) is chosen
 such that $\lvert\W_0\rvert_{\inf} = |\nabla^{\perp}\varphi_0|_{\inf}$ is non-vanishing.
\end{proof}
\begin{rem}
Lemma \ref{lem:com1} is used only in the estimate of $ | \W|_{\mu}$ in
Proposition \ref{prop:c1} and  thus unnecessary to deduce
 $\partial P \in C^{1+ \mu}$. The infimum and supremum bounds are direct
  from (\ref{eq:weq}) and $\nabla u \in C^{\mu}$ depends only on
  $\theta_0$ so that we may alternatively conclude with an $\exp(C_{L} \exp{(C_{L}
  \lvert t\rvert)})$ 
  bound on $ | \W|_{\mu}$.
\end{rem}
For $C^{2 + \mu}$-regularity, we must examine the dynamics of $\nabla
\W$. The presence of $\nabla
\nabla u \cdot \W$ in equation  (\ref{eq:dynDW})  augments the
Gr\"{o}nwall-type
exponential bounds from particle trajectories. Inspecting expression (\ref{eq:gradUkernel}) for $\nabla \nabla u$ in this context,
\begin{equation}
\label{eq:DDuintP}
\nabla  \nabla  u(x)  =  \bm{1}_{P}(x) \bm{\mathrm{E}} + \frac{1}{4 \pi} \mathrm{pv}
\int_{P}\frac{\bm{\mathrm{H}} (x-y)}{|x-y|^2} \, dy,  
\end{equation}
we see that 
$| \nabla \nabla u |_{\mu} $ is difficult to
estimate directly. From here, we recognize the
expression for $\nabla \nabla u(x) $ has similar geometric properties
properties to the strain tensor for vortex patches \cite{Bertozzi1993GlobalRF}. We adapt
their arguments here.

\begin{prop}
  Suppose $u = G * \bm{1}_{P}$ and that $
  \W \in C^{\mu}(\R^2,\R^2)$ is divergence free and tangent to $\partial
  P$. Then 
  \begin{equation*}
    \nabla \nabla u(x) \cdot \W = \frac{1}{4\pi} \mathrm{pv} \int_{P}
  \frac{\bm{\mathrm{H}}(x - y)}{|x - y|^{2}} \cdot  (\W(x) - \W(y)) \, d y.
  \end{equation*}
\end{prop}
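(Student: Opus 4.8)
The plan is to exploit the cancellation identity established just above the statement, namely that for divergence-free $\W$ tangent to $\partial P$ one has
\begin{equation*}
\mathrm{pv} \int_{P} \nabla\lbrack G(x-y)\rbrack \cdot \W(y) \, dy = 0,
\end{equation*}
and to differentiate this relation in $x$ to produce the claimed formula. More precisely, I would start from the principal-value expression \eqref{eq:DDuintP} for $\nabla\nabla u(x)$ and write $\nabla\nabla u(x)\cdot\W(x)$, using the fact that $\W(x)$ is a constant with respect to the $y$-integration so that it can be brought inside:
\begin{equation*}
\nabla\nabla u(x)\cdot\W(x) = \bm{1}_{P}(x)\,\bm{\mathrm{E}}\cdot\W(x) + \frac{1}{4\pi}\,\mathrm{pv}\int_{P} \frac{\bm{\mathrm{H}}(x-y)}{|x-y|^2}\cdot\W(x)\,dy.
\end{equation*}
The goal is then to show that the ``diagonal'' term $\bm{1}_{P}(x)\,\bm{\mathrm{E}}\cdot\W(x)$ is exactly cancelled against the piece one loses when subtracting the identity, leaving only the $\W(x)-\W(y)$ difference in the integrand.

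The key mechanism is as follows. Differentiating the vanishing relation above with respect to $x$ recovers, up to the $\bm{\mathrm{E}}$-type boundary contribution coming from the $\epsilon\to 0$ limit in the principal value (precisely the term computed in \eqref{eq:Eterm} in the proof of Proposition \ref{prop:DDom}), the statement that
\begin{equation*}
\bm{1}_{P}(x)\,\bm{\mathrm{E}}\cdot\W(x) + \frac{1}{4\pi}\,\mathrm{pv}\int_{P}\frac{\bm{\mathrm{H}}(x-y)}{|x-y|^2}\cdot\W(y)\,dy = 0.
\end{equation*}
Subtracting this from the displayed expression for $\nabla\nabla u(x)\cdot\W(x)$ makes the $\bm{\mathrm{E}}$ terms drop out and combines the two integrals into
\begin{equation*}
\nabla\nabla u(x)\cdot\W(x) = \frac{1}{4\pi}\,\mathrm{pv}\int_{P}\frac{\bm{\mathrm{H}}(x-y)}{|x-y|^2}\cdot(\W(x)-\W(y))\,dy,
\end{equation*}
which is exactly the claim (writing $\W$ for $\W(x)$ on the left as in the statement). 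The only delicate point is justifying that this subtracted integral converges as an ordinary (absolutely convergent) integral rather than merely a principal value: this is where the $C^{\mu}$-regularity of $\W$ is used, since $|\W(x)-\W(y)|\leq |\W|_{\mu}|x-y|^{\mu}$ turns the $|x-y|^{-2}$ singularity into an integrable $|x-y|^{\mu-2}$.

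The main obstacle I anticipate is making the differentiation-under-the-principal-value step rigorous — that is, carefully tracking the boundary term produced by the excised ball $\{|x-y|\leq\epsilon\}$ in the limit $\epsilon\to 0$ and verifying it produces precisely $-\bm{1}_{P}(x)\,\bm{\mathrm{E}}\cdot\W(x)$. This is essentially the same computation as in \eqref{eq:Eterm}, now contracted against $\W(x)$, together with the observation that $\bm{\mathrm{H}}$ is mean-zero and reflection-symmetric on the sphere so that the surface integral over $\{|z|=\epsilon\}$ has a clean limit independent of $\epsilon$. One must also confirm that $\W$ being tangent to $\partial P$ plus divergence-free is exactly what kills the competing boundary contribution from $\partial P$ when integrating by parts, so that no spurious $\partial P$ term survives; this uses that $\nabla G$ is homogeneous of degree $-1$, hence the flux of $G(x-y)\W(y)$ across $\partial P$ vanishes by tangency. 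Once these two cancellations are in hand, the identity follows by simple rearrangement.
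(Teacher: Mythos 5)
Your proposal is correct and is essentially the paper's own argument: the key identity
$\frac{1}{4\pi}\,\mathrm{pv}\int_{P}\frac{\bm{\mathrm{H}}(x-y)}{|x-y|^{2}}\cdot\W(y)\,dy=-\bm{1}_{P}(x)\,\bm{\mathrm{E}}\cdot\W(x)$
is exactly what the paper establishes, the only cosmetic difference being that the paper gets it in one step by integrating by parts in $y$ at the second-derivative level (with $\div\W=0$ and tangency killing the $\partial P$ flux, and the excised-circle term given by the computation \eqref{eq:Eterm}), whereas you recover it by differentiating the first-order vanishing identity in $x$ and tracking the same excised-ball contribution. The remaining ingredients you list---the $\bm{\mathrm{E}}$-term cancellation upon subtraction and the bound $|\W(x)-\W(y)|\le|\W|_{\mu}|x-y|^{\mu}$ making the rearranged integral converge---match the paper's reasoning, so no further comment is needed.
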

\begin{proof}
   Since $\W$ is
  divergence free and tangent to  $\partial P$, then
  \begin{equation}
      \begin{aligned}
      \mathrm{pv} \int_{P} &\nabla \lbrack \nabla G(x -
      y) \rbrack \cdot \W(y) \, dy \\
                    &= - \lim_{\delta \to 0} 
                    \int\limits_{|x - y| = \delta, \,y \in P} \bigg(\W(y) \cdot
                    \left(\frac{x-y}{\delta}\right)\bigg) \nabla G(x -
                    y) \, dy \\
                    &= - \bm{1}_{P}(x) \bm{\mathrm{E}} \cdot \W(x).
    \end{aligned}
  \end{equation}
  The last equality follows from (\ref{eq:Eterm}) in the proof of
  Proposition  \ref{prop:DDom}.
\end{proof}
\begin{cor}
  \label{cor:com2}
  Suppose $u = G * \bm{1}_{P}$ and that $
  \W \in C^{\mu}(\R^2,\R^2)$ is divergence free and tangent to $\partial
  P$. Then, there exists independent constant $C_3$ such that 
  \begin{equation*}
    |\nabla \nabla u \cdot  \W |_{\mu} \leq C_3 \| \nabla \nabla u\|_{L^{\infty}} |\W |_{\mu}.
  \end{equation*}
\end{cor}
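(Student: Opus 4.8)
The plan is to estimate the H\"older seminorm of $\nabla\nabla u\cdot\W$ directly from the singular-integral identity just established, using the near-field/far-field splitting of \cite{Bertozzi1993GlobalRF}, adapted from the vortex-patch strain tensor to the kernel $\nabla\nabla G(z)=\tfrac{1}{4\pi}\bm{\mathrm{H}}(z)/|z|^2$. Recall from the preceding Proposition that, for $\W$ divergence free and tangent to $\partial P$,
\[
\nabla\nabla u(x)\cdot\W(x)=\frac{1}{4\pi}\,\mathrm{pv}\!\int_P\frac{\bm{\mathrm{H}}(x-y)}{|x-y|^2}\cdot\bigl(\W(x)-\W(y)\bigr)\,dy .
\]
The decisive feature is that the increment $\W(x)-\W(y)$ tames the degree $-2$ singularity: since $|\W(x)-\W(y)|\le|\W|_\mu|x-y|^\mu$ and $\bm{\mathrm{H}}$ is bounded, the integrand is dominated by $|\W|_\mu|x-y|^{\mu-2}$, which is locally integrable in $\R^2$ because $\mu>0$. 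Hence the integral converges absolutely; in particular $\nabla\nabla u\cdot\W$ is a genuine bounded function, even though $\nabla\nabla u$ itself only lies in $L^\infty$ (it carries the jump $\bm{1}_P\bm{\mathrm{E}}$ across $\partial P$). The corollary asserts that this function is moreover $C^\mu$, with seminorm controlled by $\|\nabla\nabla u\|_{L^\infty}|\W|_\mu$.

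To prove this, fix $x\neq x'$, put $d=|x-x'|$, and split $P$ into the near part $P\cap B(x,Nd)$ and the far part $P\setminus B(x,Nd)$, with $N$ a fixed numerical constant large enough that $B(x',2d)\subset B(x,Nd)$. On the near part, estimate the two integrals (one centered at $x$, one at $x'$) separately by absolute convergence together with the H\"older bound on $\W$; using $\int_{B(\cdot,\,Nd)}|z|^{\mu-2}\,dz\lesssim_{N,\mu}d^\mu$, this contributes at most $C_\mu|\W|_\mu d^\mu$, with a purely numerical constant. On the far part, add and subtract to write the difference of integrands as the sum of a kernel-difference term $\bigl(\W(x')-\W(y)\bigr)\bigl(\tfrac{\bm{\mathrm{H}}(x-y)}{|x-y|^2}-\tfrac{\bm{\mathrm{H}}(x'-y)}{|x'-y|^2}\bigr)$ and an increment term $\bigl(\W(x)-\W(x')\bigr)\tfrac{\bm{\mathrm{H}}(x-y)}{|x-y|^2}$. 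The kernel-difference term is handled by the Calder\'on--Zygmund smoothness estimate $\bigl|\tfrac{\bm{\mathrm{H}}(x-y)}{|x-y|^2}-\tfrac{\bm{\mathrm{H}}(x'-y)}{|x'-y|^2}\bigr|\lesssim d/|x-y|^3$ on $|x-y|\ge Nd$ together with $|\W(x')-\W(y)|\lesssim|\W|_\mu|x-y|^\mu$; since $3-\mu>2$ the resulting integral converges at infinity and is again $\lesssim_{N,\mu}|\W|_\mu d^\mu$. The increment term contributes $\bigl(\W(x)-\W(x')\bigr)\int_{P\setminus B(x,Nd)}\tfrac{\bm{\mathrm{H}}(x-y)}{|x-y|^2}\,dy$, whose modulus is at most $|\W|_\mu d^\mu$ times the truncated singular integral $\bigl|\int_{P\setminus B(x,Nd)}\tfrac{\bm{\mathrm{H}}(x-y)}{|x-y|^2}\,dy\bigr|$.

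The heart of the argument is to bound this last truncated integral, uniformly in $x$ and in $d$, by a constant multiple of $\|\nabla\nabla u\|_{L^\infty}$. Here one uses three facts: the exact vanishing $\int_{B(x,r)}\tfrac{\bm{\mathrm{H}}(x-y)}{|x-y|^2}\,dy=0$ over every concentric disk (angular mean-zero of $\bm{\mathrm{H}}$); the identity $\mathrm{pv}\int_P\tfrac{\bm{\mathrm{H}}(x-y)}{|x-y|^2}\,dy=4\pi\bigl(\nabla\nabla u(x)-\bm{1}_P(x)\bm{\mathrm{E}}\bigr)$; and, for the residual integral over the thin region $B(x,Nd)\setminus P$ (respectively $B(x,Nd)\cap P$ when $x\notin P$), the fact that on the scale $Nd$ the boundary $\partial P$ differs by only a small correction from a straight line through a point near $x$, so that the reflection symmetry $\bm{\mathrm{H}}(-z)=\bm{\mathrm{H}}(z)$ forces near-total cancellation over that region — the same geometric mechanism that yields the $L^\infty$ bound on $\nabla\nabla u$ for patches. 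Assembling the near part, the kernel-difference part, and the increment part gives $\bigl|[\nabla\nabla u\cdot\W](x)-[\nabla\nabla u\cdot\W](x')\bigr|\le C_3\|\nabla\nabla u\|_{L^\infty}|\W|_\mu\,d^\mu$, and taking the supremum over $x\neq x'$ proves the corollary.

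I expect the geometric cancellation over the thin boundary region to be the main obstacle, rather than the routine near/far bookkeeping: it is precisely where the analysis of \cite{Bertozzi1993GlobalRF} is invoked, and where the sharp dependence on $\|\nabla\nabla u\|_{L^\infty}$ — as opposed to a cruder bound involving the full $C^\mu$ norm or a Gr\"onwall-exponentiated factor — is obtained. Making the local-flattening estimate of $\partial P$ on the scale $Nd$ quantitative (using only that $\W$, hence the tangent of $\partial P$, is $C^\mu$ nonvanishing) is the technical point that has to be carried out with care.
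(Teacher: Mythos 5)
Your overall architecture --- the representation formula from the preceding Proposition, a near/far splitting at scale $Nd$, absolute convergence plus the H\"older bound of $\W$ in the near field, and the Calder\'on--Zygmund smoothness bound $d/|x-y|^{3}$ for the far-field kernel difference --- is precisely the commutator argument in the Appendix of \cite{Bertozzi1993GlobalRF}, which is all the paper itself offers for this corollary (it is proved there by citation only). You also correctly identify the crux: a bound on the truncated integral $\bigl|\int_{P\setminus B(x,Nd)}\bm{\mathrm{H}}(x-y)|x-y|^{-2}\,dy\bigr|$ by $C\|\nabla\nabla u\|_{L^{\infty}}$ (up to an additive constant), uniformly in $x$ and in the truncation radius.

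However, your plan for that crux does not close as described. You propose to subtract the full principal value and then control the residual over $B(x,Nd)\setminus P$ by flattening $\partial P$ at scale $Nd$ and using the reflection symmetry of $\bm{\mathrm{H}}$. First, this imports hypotheses the corollary does not have: the flattening scale $\delta=(|\nabla\varphi|_{\inf}/|\nabla\varphi|_{\mu})^{1/\mu}$ requires a nonvanishing $C^{\mu}$ normal, and a constant produced this way depends on the patch, against the stated ``independent $C_3$''. Second, and more seriously, $d=|x-x'|$ is arbitrary, so $Nd$ ranges over all scales up to $\mathrm{diam}(P)$; the Geometric Lemma controls the deviation of $S_{\rho}(x_0)$ from a half-circle only for $\rho$ up to order $\delta$, and for $Nd>\delta$ the set $B(x,Nd)\setminus P$ is not a thin near-flat region: the contribution of radii between $\delta$ and $Nd$ grows like $\log(Nd/\delta)$, and you cannot retreat to a sup-norm bound of the product at large $d$, since that costs $\|\W\|_{L^{\infty}}$ rather than $|\W|_{\mu}d^{\mu}$. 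The standard repair needs no geometry at all: a Cotlar-type averaging. Writing $Tf(x)=\mathrm{pv}\int \bm{\mathrm{H}}(x-y)|x-y|^{-2}f(y)\,dy$, for $w\in B(x,Nd/2)$ one has $T_{Nd}\bm{1}_P(x)=T\bm{1}_P(w)-T(\bm{1}_{P\cap B(x,Nd)})(w)+\bigl[T(\bm{1}_{P\setminus B(x,Nd)})(x)-T(\bm{1}_{P\setminus B(x,Nd)})(w)\bigr]$; averaging over $w\in B(x,Nd/2)$, the first term is $4\pi(\nabla\nabla u(w)-\bm{1}_P(w)\bm{\mathrm{E}})$, hence bounded by $4\pi\|\nabla\nabla u\|_{L^{\infty}}+C$, the second is controlled in mean square by $L^{2}$-boundedness of the CZ operator applied to a characteristic function supported in $B(x,Nd)$ (an absolute constant), and the bracket is bounded by the same kernel-smoothness estimate you already used. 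This yields $\sup_{r}\bigl|T_{r}\bm{1}_P(x)\bigr|\le C\,(1+\|\nabla\nabla u\|_{L^{\infty}})$ with a constant depending only on $\mu$ and the kernel, which is what the statement and its use through Proposition \ref{prop:DDu} in Proposition \ref{prop:c2} require; the additive constant (present also in your final bound) is harmless there, though strictly the inequality one obtains is $|\nabla\nabla u\cdot\W|_{\mu}\le C(1+\|\nabla\nabla u\|_{L^{\infty}})|\W|_{\mu}$ unless one argues separately that $\|\nabla\nabla u\|_{L^{\infty}}$ is bounded below.
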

The estimation of $\| \nabla \nabla u \|_{L^{\infty}}$ is consequence of
of the fact that small neighborhoods containing $\partial P$ look like
half-circles,  and that the kernel $\bm{\mathrm{H}}(z) / |z|^2$ is reflection
symmetric. To
illustrate, consider
the set of points $x_0$ with distance
\begin{equation}
d(x_0) = \inf\limits_{x \in \partial P} \{|x - x_0|\} 
\end{equation}
less than a cutoff $0 < \delta \leq \infty$. This cutoff is explicit
\begin{equation}
  \delta^{\mu} = \frac{| \nabla \varphi|_{\inf}}{| \nabla \varphi
  |_{\mu}},
\end{equation}
given $\varphi$ which satisfies (\ref{eq:patchphi}).
Such a choice ensures that the boundary $ \partial P$ can
be straightened near the points $x_0$ where
\begin{equation}
\label{eq:deldef}
d(x_0) < \delta.
\end{equation}

Indeed, for such points $x_0$ the semicircle
\begin{equation}
\Sigma(x_0) = \{z  \mid  |z| =1, \nabla \varphi(\tilde{x}) \cdot z \geq
0\} ,
\end{equation}
where above $\tilde{x} \in \partial P$ is any such that
$|\tilde{x}- x_0| = d(x_0)$ holds, 
is in fact well-approximated by the set of directions
\begin{equation}
S_{\rho}(x_0) = \{z  \mid |z|=1, x_0 + \rho z \in P\} 
\end{equation}
for every $\rho \geq d(x_0)$. More quantitatively,
\begin{lemma}[Geometric Lemma]
  Denote the symmetric difference as 
\begin{equation*}
\label{eq:symdif}
R_{\rho}(x_0) = (S_{\rho}(x_0) \setminus \Sigma(x_0)) \cup (\Sigma(x_0)
\setminus S_{\rho}(x_0))
\end{equation*}
and the Lebesgue measure on the unit circle as $H^{1}$.
  Then,
  \begin{equation*}
    H^{1}(R_{\rho}(x_0)) \leq 2 \pi \left( (1 + 2^{\mu})
  \frac{d(x_0)}{\rho} + 2^{\mu} \left( \frac{\rho}{\delta} \right)^{\mu}  \right) 
  \end{equation*}
  for all $\rho \geq d(x_0), \mu > 0$ and $x_0$ such that $d(x_0) <
  \delta = \left( \dfrac{| \nabla \varphi |_{\inf}}{|\nabla \varphi
  |_{\mu}} \right)^{1/ \mu} .$ 
\end{lemma}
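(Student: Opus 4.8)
The plan is to control $\varphi$ near $\partial P$ by its first-order Taylor polynomial at the closest boundary point, and then to reduce the set estimate to the measure of a thin band on the unit circle. First I fix $x_0$ with $d:=d(x_0)$, pick $\tilde x\in\partial P$ realizing $|\tilde x-x_0|=d$, and put $n:=\nabla\varphi(\tilde x)/|\nabla\varphi(\tilde x)|$, which is well defined since $|\nabla\varphi(\tilde x)|\ge|\nabla\varphi|_{\inf}>0$; by construction $\Sigma(x_0)=\{z\in\Sone:\ n\cdot z\ge 0\}$. For $z\in\Sone$ and $\rho\ge d$, set $y:=x_0+\rho z$ and expand from $\tilde x$: since $\varphi(\tilde x)=0$, the fundamental theorem of calculus together with the global $\mu$-Hölder bound on $\nabla\varphi$ yields
\begin{equation*}
\varphi(y)=\nabla\varphi(\tilde x)\cdot(y-\tilde x)+E(y),\qquad |E(y)|\le|\nabla\varphi|_{\mu}\,|y-\tilde x|^{1+\mu}\le|\nabla\varphi|_{\mu}\,(d+\rho)^{1+\mu},
\end{equation*}
using $|y-\tilde x|\le|x_0-\tilde x|+\rho\le d+\rho\le 2\rho$. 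Writing $a:=n\cdot(x_0-\tilde x)$ (so $|a|\le d$), the linear term equals $|\nabla\varphi(\tilde x)|\big(a+\rho\,(n\cdot z)\big)$; since $|\nabla\varphi(\tilde x)|\ge|\nabla\varphi|_{\inf}$, the sign of $\varphi(y)$ — hence membership of $z$ in $S_{\rho}(x_0)$ — agrees with the sign of $a+\rho(n\cdot z)$ whenever $|a+\rho(n\cdot z)|>\beta$, where $\beta:=(d+\rho)^{1+\mu}/\delta^{\mu}$ is a uniform upper bound for $|E(y)|/|\nabla\varphi|_{\inf}$ (here the defining relation $\delta^{\mu}=|\nabla\varphi|_{\inf}/|\nabla\varphi|_{\mu}$ is used).

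From this dichotomy I obtain the band containment. If $n\cdot z>(d+\beta)/\rho$ then $a+\rho(n\cdot z)>-d+(d+\beta)=\beta>0$, so $\varphi(y)>0$, i.e. $z\in S_{\rho}(x_0)$; since also $n\cdot z>0$, $z\in\Sigma(x_0)$, so $z\notin R_{\rho}(x_0)$. Symmetrically, if $n\cdot z<-(d+\beta)/\rho$ then $a+\rho(n\cdot z)<d-(d+\beta)=-\beta<0$, so $z\notin S_{\rho}(x_0)$ and $z\notin\Sigma(x_0)$, whence again $z\notin R_{\rho}(x_0)$. Therefore
\begin{equation*}
R_{\rho}(x_0)\subseteq\Big\{z\in\Sone:\ |n\cdot z|\le\tfrac{d+\beta}{\rho}\Big\}.
\end{equation*}

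To finish, I use that the band $\{z\in\Sone:\ |n\cdot z|\le t\}$ has $H^1$-measure $4\arcsin(\min(1,t))\le 2\pi t$, so $H^1(R_{\rho}(x_0))\le 2\pi\big(\tfrac{d}{\rho}+\tfrac{\beta}{\rho}\big)$. Using $d\le\rho$ to write $(d+\rho)^{1+\mu}\le(2\rho)^{\mu}(d+\rho)$ gives $\tfrac{\beta}{\rho}\le 2^{\mu}(\rho/\delta)^{\mu}\big(1+\tfrac{d}{\rho}\big)$; if $\rho\le\delta$ then $(\rho/\delta)^{\mu}\le 1$ and the asserted inequality follows at once, while if $\rho>\delta$ the asserted right-hand side already exceeds $2\pi\cdot 2^{\mu}>2\pi\ge H^1(R_{\rho}(x_0))$, so the estimate is trivially true (the sub-case $\rho\ge d(x_0)\ge\delta$ is absorbed here as well). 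The one genuinely delicate point is the opening reduction: one must make sure the first-order expansion of $\varphi$ about $\tilde x$ is legitimate with remainder controlled purely by $|\nabla\varphi|_{\mu}$, which is exactly where the scale $\delta^{\mu}=|\nabla\varphi|_{\inf}/|\nabla\varphi|_{\mu}$ enters and packages the curvature of $\partial P$. Everything afterwards is bookkeeping — one band-measure estimate on $\Sone$ and one split in $\rho$ — and, as a sanity check, the argument needs neither uniqueness of the nearest point nor exact normality of $x_0-\tilde x$ (only $|a|\le d$), and works for every $\mu>0$.
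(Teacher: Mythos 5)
Your argument is correct, and it is essentially the classical one: the paper itself gives no proof but defers to the Appendix of \cite{Bertozzi1993GlobalRF}, whose argument is exactly this — first-order expansion of $\varphi$ at a nearest boundary point with the remainder controlled by $|\nabla\varphi|_{\mu}$ via the scale $\delta^{\mu}=|\nabla\varphi|_{\inf}/|\nabla\varphi|_{\mu}$, containment of $R_{\rho}(x_0)$ in a band $\{|n\cdot z|\lesssim (d+\beta)/\rho\}$ on $\Sone$, and an elementary arc-measure bound. Your handling of the case $\rho>\delta$ (where the stated bound is trivial) and your observation that only $|a|\le d$, not normality of $x_0-\tilde x$, is needed are both sound, so the proposal stands as a correct self-contained rendition of the cited proof.
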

\begin{prop}
  \label{prop:DDu}
  Suppose $u = G * \bm{1}_{P}$ and that $\varphi$ satisfies (\ref{eq:patchphi}) for
  $P$. Then, there exists constant $C_4$ depending only on  $ \mu, L, \lvert\W_0\rvert_{\mu}$ and $ |\W_0
  |_{\inf}$ such that
  \begin{equation*}
    \|\nabla \nabla u(t)\|_{L^{\infty}} \leq C_4 ( 1 + |t|).
  \end{equation*}
\end{prop}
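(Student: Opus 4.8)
The plan is to split the principal-value integral defining $\nabla\nabla u(x)$ into a near-field piece over the disk $B(x,\rho)$ and a far-field piece over $P\setminus B(x,\rho)$, where the radius $\rho$ will be chosen depending on $d(x)$, the distance from $x$ to $\partial P$. For the far-field piece, homogeneity of degree $-2$ of the kernel $\bm{\mathrm{H}}(z)/|z|^2$ and the bound $|\bm{\mathrm{H}}|\le C$ give a contribution bounded by $C\int_{\rho\le|z|\le 2R}|z|^{-2}\,dz \le C\log(2R/\rho)$, which is controlled once we know $\rho$ is not too small and $R=R(t)$ is the (finitely growing) support radius from Theorem \ref{thm:well}. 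So the whole difficulty is the near-field piece, and this is exactly where the mean-zero and reflection-symmetry properties of $\bm{\mathrm{H}}$ enter.

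For the near-field piece I would write, using that $\bm{\mathrm{H}}$ is homogeneous of degree zero,
\begin{equation*}
\mathrm{pv}\int_{P\cap B(x,\rho)}\frac{\bm{\mathrm{H}}(x-y)}{|x-y|^2}\,dy
= \int_0^\rho \frac{1}{r}\left(\int_{S_r(x)}\bm{\mathrm{H}}(z)\,dH^1(z)\right)dr,
\end{equation*}
where $S_r(x)=\{z:|z|=1,\ x+rz\in P\}$ is the set of directions introduced before the Geometric Lemma. The key point is that $\int_{\Sigma(x)}\bm{\mathrm{H}}\,dH^1 = \tfrac12\int_{\mathbb S^1}\bm{\mathrm{H}}\,dH^1 = 0$ because $\bm{\mathrm{H}}$ is mean-zero on the circle \emph{and} reflection-symmetric, so integrating over a half-circle already gives zero. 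Therefore the inner integral equals $\int_{S_r(x)}\bm{\mathrm{H}} - \int_{\Sigma(x)}\bm{\mathrm{H}}$, whose modulus is at most $\|\bm{\mathrm{H}}\|_{L^\infty(\mathbb S^1)}\,H^1(R_r(x))$ with $R_r(x)$ the symmetric difference from the Geometric Lemma. Plugging in the Geometric Lemma bound $H^1(R_r(x))\le 2\pi((1+2^\mu)d(x)/r + 2^\mu(r/\delta)^\mu)$ and integrating $\tfrac{1}{r}$ against it over $r\in[d(x),\rho]$ yields a bound of the shape $C(1 + \rho^\mu/\delta^\mu)$ plus the leftover $d(x)/r$ term which, over $r\in[d(x),\rho]$, contributes only $O(1)$; meanwhile the range $r\in(0,d(x))$ contributes $O(d(x)/\delta)^\mu$ or is simply absent depending on how one sets up $S_r$. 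Choosing $\rho$ of order $\delta$ (or $\rho=\min(\delta,R)$) then makes the near-field piece $O(1)$ uniformly, and the far-field piece becomes $\le C\log(2R/\delta)$.

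The remaining job is to track the time dependence of the constants. The identity-like term $\bm{1}_P(x)\bm{\mathrm{E}}$ is bounded by an absolute constant. The support radius $R(t)$ grows at most like $\exp(Ct)$ by the differential inequality in Theorem \ref{thm:well} (indeed slower, double-logarithmically, but $\exp$ is a safe crude bound), so $\log(2R(t)/\delta(t)) \le C(1+|t|) + \log(1/\delta(t))$. By Proposition \ref{prop:c1}, $|\W(t)|_{\inf}\ge|\W_0|_{\inf}e^{-C_L|t|}$ and $|\W(t)|_\mu\le|\W_0|_\mu e^{C|t|}$, so $\delta(t)^\mu = |\W(t)|_{\inf}/|\W(t)|_\mu \ge (|\W_0|_{\inf}/|\W_0|_\mu)e^{-C|t|}$, giving $\log(1/\delta(t)) \le C|t| + \log(|\W_0|_\mu/|\W_0|_{\inf})$. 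Collecting everything, $\|\nabla\nabla u(t)\|_{L^\infty}\le C_4(1+|t|)$ with $C_4$ depending only on $\mu$, $L$, $|\W_0|_\mu$, $|\W_0|_{\inf}$, as claimed. The main obstacle is the bookkeeping in the near-field estimate: one must be careful that the cutoff $\rho$ can be taken comparable to $\delta$ even when $d(x)$ is comparable to $\delta$ (the Geometric Lemma requires $d(x)<\delta$, and for $x$ with $d(x)\ge\delta$ there is no singularity issue since $P$ does not meet a neighborhood of $x$ on the relevant scale, or one uses the far-field estimate alone), and that the $\mu$-dependent constants from the Geometric Lemma and from Proposition \ref{prop:c1} are absorbed cleanly rather than compounding into a double exponential.
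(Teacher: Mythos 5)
Your argument is essentially the paper's: split the principal value at the scale $\delta=(\lvert\W\rvert_{\inf}/\lvert\W\rvert_{\mu})^{1/\mu}$, use that $\int_{\Sigma(x_0)}\bm{\mathrm{H}}\,dH^1=0$ (mean-zero on $\Sone$ plus reflection symmetry) to replace the directional set $S_r$ by its symmetric difference with the half-circle, invoke the Geometric Lemma, integrate $dr/r$ to get an $O(1+1/\mu)$ near-field bound, and then convert $\log(1/\delta(t))$ into linear growth in $t$ via the exponential bounds of Proposition \ref{prop:c1}. Two small points of comparison. First, for the far field the paper integrates over $P\cap\{\lvert x_0-y\rvert\ge\delta\}$ and uses the \emph{conserved area} $L^2$ of the patch, giving $\lvert I_2\rvert\le C(1+\log(L/\delta))$ with no reference to the support radius; you instead use $R(t)$ from Theorem \ref{thm:well}. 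Your route still yields a bound linear in $\lvert t\rvert$ (since $\log R(t)\lesssim 1+\lvert t\rvert$), but it silently adds a dependence of $C_4$ on the initial support radius $R_0$, which is not among the stated dependencies $\mu,L,\lvert\W_0\rvert_{\mu},\lvert\W_0\rvert_{\inf}$ (a long thin patch has small $L$ but large $R_0$); replacing $R(t)$ by the area, as in the paper, removes this. Second, your treatment of $r<d(x)$ and of points with $d(x)\ge\delta$ is slightly misstated: when $x$ lies deep inside $P$ the ball does meet (indeed lies in) $P$, and the near-field contribution vanishes there because full circles are mean-zero for $\bm{\mathrm{H}}$, not because the far-field estimate covers it; this is the mechanism the paper uses implicitly by starting the radial integral at $d(x_0)$ and noting $I_1=0$ when $d(x_0)>\delta$.
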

\begin{proof}
 We need only estimate the singular integral, which we split into
 $I_1$
 and $I_2$. For $\delta$ given by (\ref{eq:deldef}), the latter has the
 bound
 \begin{equation}
 |  I_2(x_0)|= \left| \frac{1}{4 \pi} \int_{P \cap \{| x_0 - y | \geq
 \delta\} } \frac{\bm{\mathrm{H}}(x_0 - y)}{|x_0 - y|^2} \, dy \right|
 \leq 1 + \log\left( \frac{\delta}{L} \right).
 \end{equation}

 The remaining term,
\begin{equation}
| I_1(x_0)| = \frac{1}{4 \pi} \int_{P \cap \{| x_0 - y | <
 \delta\} } \frac{\bm{\mathrm{H}}(x_0 - y)}{|x_0 - y|^2} \, dy
\end{equation}
vanishes for $d(x_0) > \delta$. We thus assume $d(x_0) < \delta$, and pass to
polar coordinates centered at $x_0$ to find
\begin{equation}
| I_{1}(x_0) | \leq \frac{1}{4\pi} \int_{d(x_0)}^{\delta}
\frac{d \rho}{ \rho} H^{1}(R_{\rho}(x_0)),
\end{equation}
using the fact $\int_{\Sigma(x_0)} \bm{\mathrm{H}}(z) d H^{1}(z)$
vanishes by
reflection symmetry. Now applying the Geometric Lemma,
we integrate to discover
\begin{equation}
| I_{1} | \leq \frac{1}{2} \left( 1 + 2^{\mu} +
\frac{2^{\mu}}{\mu}\right )
\end{equation}

We then have our bound
\begin{equation}
  \|\nabla \nabla u \|_{L^{\infty}} \leq \left( 4 + \frac{1}{\mu}
  \right) \left( 1 + \log \left\lbrack  \frac{|\nabla \varphi |_{\mu}
L^{\mu}}{|\nabla \varphi |_{\inf}} \right\rbrack \right) 
\end{equation}
and use the estimates in Proposition \ref{prop:c1} to conclude.
\end{proof}
The proof of Corollary \ref{cor:com2} and the Geometric Lemma may be found in
the Appendix of \cite{Bertozzi1993GlobalRF}. It follows that
(\ref{eq:dynDW}) admits a Gr\"{o}nwall-type estimate.
\begin{prop}
  \label{prop:c2}
  Suppose $P_0$ bounded and that $\varphi_0 \in C^{2 + \mu} $
  in (\ref{eq:cauchyphiic}). Then, 
  the unique global
  solution $\varphi$ has $\nabla \W = \nabla \nabla^{\perp}\varphi$ which satisfies:
  \begin{equation*}
    |\nabla \W(t) |_{\mu} \leq
  \exp((C_3 + \mu) C_{L} \lvert t\rvert) \left \lbrack |  \nabla \W_0 |_{ \mu}  + C_4 \int_0^{t}
  (1 + |s|) |\W(s) |_{\mu} \, d s \right \rbrack,
  \end{equation*}
  \begin{equation*}
    \|\nabla \W (t)\|_{L^{\infty}} \leq
 \exp(C_{L} \lvert t\rvert) \left \lbrack   \|\nabla \W_0\|_{L^{\infty}} + C_4 \int_0^{t} (1 +
  |s|) \|\W(s)\|_{L^{\infty}} \, ds \right\rbrack, 
  \end{equation*}
  for positive $t$.
\end{prop}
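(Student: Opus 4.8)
The plan is to propagate both quantities along the particle trajectories $X(\cdot,t)$ generated by $u$, adapting the Lagrangian scheme of \cite[Proposition 3]{Bertozzi1993GlobalRF} — used there for the first-order tangent field — to the second-order field $\nabla\W$, which obeys the evolution equation \eqref{eq:dynDW}. The ingredients I would feed into this scheme are: the uniform bound $\int_0^{t}\|\nabla u(s)\|_{L^{\infty}}\,ds \leq C_L|t|$ from \eqref{eq:Duquant} together with $\|\nabla u(t)\|_{C^{\mu}}\leq C_\mu C_L$ from \eqref{eq:controlL}; Corollary \ref{cor:com2} to control the H\"older seminorm of the forcing term $\nabla\nabla u\cdot\W$; and Proposition \ref{prop:DDu} for the growth $\|\nabla\nabla u(t)\|_{L^{\infty}}\leq C_4(1+|t|)$.

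For the $L^{\infty}$ bound I would compose \eqref{eq:dynDW} with the flow map: for a fixed label $a$, the derivative of $|\nabla\W(X(a,t),t)|$ is bounded by $C\|\nabla u(t)\|_{L^{\infty}}|\nabla\W(X(a,t),t)|$, using the elementary commutator estimate $|[A,B]|\leq 2|A|\,|B|$, plus $\|\nabla\nabla u(t)\|_{L^{\infty}}\|\W(t)\|_{L^{\infty}}$ for the forcing term via Proposition \ref{prop:DDu}. Taking the supremum over labels and integrating the resulting linear differential inequality with Gr\"onwall, the integrating factor is at most $\exp(C_L|t|)$ and at most $1$ inside the Duhamel integral, which yields exactly the stated estimate after absorbing the absolute constant into $C_L$.

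For the $|\cdot|_{\mu}$ bound I would compare $\nabla\W$ at two points $x=X(a,t)$, $x'=X(a',t)$ and pull back to the labels. Since $|x-x'|\geq|a-a'|\exp(-\int_0^{t}\|\nabla u\|_{L^{\infty}})\geq|a-a'|\exp(-C_L|t|)$, dividing a Lagrangian difference by $|x-x'|^{\mu}$ costs at most $\exp(\mu C_L|t|)$ — this is precisely the origin of the extra $\mu$ in the exponent $(C_3+\mu)C_L|t|$. Differencing \eqref{eq:dynDW} along characteristics kills the transport term; the commutator $[\nabla\W,\nabla u]$ produces the main term $\|\nabla u\|_{L^{\infty}}|\nabla\W|_{\mu}$ together with lower-order contributions already controlled by Proposition \ref{prop:c1} and the $L^{\infty}$ bound just established; and the forcing term is estimated by Corollary \ref{cor:com2} and Proposition \ref{prop:DDu} as $|\nabla\nabla u\cdot\W|_{\mu}\leq C_3\|\nabla\nabla u\|_{L^{\infty}}|\W|_{\mu}\leq C_3C_4(1+|t|)|\W|_{\mu}$. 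Assembling these into an inequality of the form $\frac{d}{dt}|\nabla\W(t)|_{\mu}\leq(C_3+\mu)C_L|\nabla\W(t)|_{\mu}+C_4(1+|t|)|\W(t)|_{\mu}$ and applying Gr\"onwall gives the second estimate after renaming constants.

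The hard part is the forcing term $\nabla\nabla u\cdot\W$: as already noted, its H\"older seminorm cannot be read off the singular-integral representation \eqref{eq:DDuintP}, because $|\nabla\nabla u|_{\mu}$ is not controlled. The resolution lies entirely in Corollary \ref{cor:com2}, which uses that $\W$ is divergence-free and tangent to $\partial P$ to trade $|\nabla\nabla u\cdot\W|_{\mu}$ for $\|\nabla\nabla u\|_{L^{\infty}}|\W|_{\mu}$, after which Proposition \ref{prop:DDu} supplies the at-most-linear-in-$t$ growth of $\|\nabla\nabla u\|_{L^{\infty}}$. The remaining labor — tracking the lower-order commutator contributions and the flow-map distortion — is routine and follows \cite[Proposition 3]{Bertozzi1993GlobalRF} essentially verbatim.
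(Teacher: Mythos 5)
Your proposal is correct and takes essentially the same route as the paper, which itself gives no more detail than assembling the evolution equation \eqref{eq:dynDW} with Corollary \ref{cor:com2}, Proposition \ref{prop:DDu}, and the Lagrangian Gr\"{o}nwall scheme of \cite[Proposition 3]{Bertozzi1993GlobalRF}, the flow-map distortion supplying the extra $\mu$ in the exponential rate --- precisely your ingredients. The one step worth writing out is the commutator contribution $|\nabla u|_{\mu}\,\|\nabla \W\|_{L^{\infty}}$, which, as you note, is uniformly handled via \eqref{eq:controlL} together with the $L^{\infty}$ bound you establish first.
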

\begin{proof}[Proof of Theorem \ref{thm:reg} for $\partial P_0 \in C^{2 + \mu}$]
 With Proposition \ref{prop:c1} and \ref{prop:c2}, we see that
 \begin{equation}
  \|\W(t)\|_{C^{1 +\mu}} \leq C_5 \|\W_0\|_{C^{1+\mu}} (1+\lvert t\rvert)
 \exp(C_{L} \lvert t\rvert)
 \end{equation}
 for some constant $C_5$ depending only on $\mu$, $L$ and  $\varphi_0$, as desired.
\end{proof}

\section{Details of Figure \ref{fig:circ}}\label{sec:fig}
\subsection{Description of the numerical solver}\label{secA0}

In the compact domain $\T^2$, we develop a solver for the density patch problem
in the system (\ref{eq:Bs}) which resolves the dynamics of the
patch boundary $\partial P(t)$ in Corollary \ref{cor:patchtor} using a
level-set method that is second-order in time and first-order in
space (for background, see \cite{sethian1996level}).

The algorithm begins with the expression
\begin{equation}
u = -\nabla^{\perp} (\Delta^ 2)^{-1} \partial_1 \theta,
\end{equation}
where the scalar $\theta: \T^2 \to \R$ has zero mean, so the operator
$(\Delta^ 2)^{-1}$ on $\partial_1 \theta$ is well-defined and the vector-field $u: \T^2 \to \R^2$ has zero mean also.
The discrete Fourier coefficients $\widehat{u}$ are related explicitly
to $\widehat{\theta}$
\begin{equation}
\widehat{u}(k) = \frac{k^{\perp} k_1}{|k|^{4}} \widehat{\theta}(k)
\end{equation}
for nonzero $k \in  2\pi \Z^2$ and $k^{\perp} = (-k_2,k_1)$.
Accordingly, we set $\widehat{u}(0) = 0$. 

With the spacing $h = 1 / N$, we discretize the
space variable onto an $N \times N$ lattice with coordinates 
$x_{ij} = h (i - N/2,j - N / 2)$
for $i,j =0,\ldots,N-1$. 
Note that we
have fixed $N$ to some power of two, and write 
 $\varphi_{ij}(t) :=
\varphi(x_{ij},t)$ such that  $\theta_{ij} = H(\varphi_{ij})$. With this
convention, we have
\begin{equation}
\label{eq:fourtheta}
\widehat{\theta}(k) \approx h^2 \sum_{j=0}^{N-1} \sum_{i = 0}^{N-1}
H(\varphi_{ij}) \exp \left(-\iota k  \cdot x_{ij}  \right),
\end{equation}
where $\iota$ here is the imaginary unit. We thus resolve the
advecting velocity with spectral accuracy:
\begin{equation}
u_{ij} \approx \sum_{|k| \leq \pi N} 
\frac{k^{\perp} k_1}{|k|^{4}} \widehat{\theta}(k) \exp (\iota
k \cdot x_{ij}).
\end{equation}

The advection operator $F(\varphi) = - u \cdot \nabla \varphi$ is discretized  according to the monotone first-order upwinding scheme
\begin{equation}
F(\varphi_{ij}) \approx  (u_{ij})^{-} \cdot D^{+}_{ij}- (u_{ij})^{+}
 \cdot D^{-}_{ij},
\end{equation}
where the signed velocities $(v)^{\pm} = \max(\pm v,0) $ with $v = (v)^{+} - (v)^{-}$
product with the signed gradient 
\begin{equation}
\label{eq:pmD}
  D^{\pm}_{ij} = \pm\frac{1}{h} \begin{pmatrix} \varphi_{i\pm1,j} -
  \varphi_{ij} \\ \varphi_{i,j\pm1} - \varphi_{ij} \end{pmatrix}. 
\end{equation}

With our procedure given by (\ref{eq:fourtheta}-\ref{eq:pmD}), we integrate the system 
\begin{equation}
\frac{\partial\varphi_{ij}}{\partial t} = F(\varphi_{ij})
\end{equation}
in time using the second-order SSRK (Heun's) method where the
time step is chosen such that $CFL \leq 1 /2$, to complete the algorithm.




\subsection{Verification of convergence}\label{secA1}

Our verification of the numerical solver described in Section \ref{secA0}
addresses the implementation of the following three routines: 
\begin{enumerate}
\item\label{ver1} the level-set
method for the transport equation, 
\item\label{ver2} the spectral-collocation method for the momentum
equation, 
\item the coupling of \eqref{ver1} and \eqref{ver2} to approximate patch
solutions in the full system. 
\end{enumerate}

The implementation of first-order
upwind to transport the level-set function $\varphi$ was tested against various
fixed divergence-free velocity fields on $\T^{2}$: the shear  $u(x) =
(x_2^{3},0)$, and cellular flows $u = \nabla^{\perp}\psi$ where 
\begin{equation}
\psi(x) =
\sin(k_{1} x_{1}) \sin(k_{2} x_{2}),
\end{equation}
for various mode numbers $k \in 2 \pi \Z^2$. 

The
spectral solver for the momentum equation was tested on sinusoidal
temperature distributions
\begin{equation}
\theta(x) = \cos(k_{1}x_{1} + k_{2} x_2)
\end{equation}
and it indeed recovers the exact solution
\begin{equation}
u(x) = \frac{k_{1}k^{\perp}}{|k|^{4}}
\cos(k_1 x_1 + k_2 x_2)
\end{equation}
up to machine precision for any $|k| \leq \pi N$, as expected by the
Nyquist-Shannon sampling theorem. 

These two solvers are coupled together
in the full algorithm, so we verified that quantities like $\|\nabla
\nabla u\|_{L^{\infty}}$ and $\|\nabla\nabla\varphi\|_{L^{\infty}}$
from simulations with the same initial data but 
various $N$ converge for short time we increase $N$.   

Finally, we examine if the numerical solver is appropriately handling the
dynamics of the  low regularity solution which has initial data $\varphi_0(x) =
\Phi(x)$. Consider the regularized system
\begin{equation}
\label{eq:Ps}
\tag{$\Phi_{\epsilon}$}
\left \{
  \begin{aligned}
    \left( \frac{\partial}{\partial t} + u\cdot\nabla\right) \varphi &= 0, \\
    -\Delta u +
    \nabla \Pi  &= \phi_{\epsilon} *H(\varphi) e_2, \\
    \mathrm{div}\, u &= 0,
  \end{aligned}
\right.
\end{equation}
where we have $\epsilon > 0$ and  the mollifier is the Gaussian
\begin{equation}
\phi_{\epsilon }(z) = \frac{1}{\epsilon\sqrt{2 \pi} } \exp\left(-
  \frac{z^2}{2\epsilon^2}\right).
\end{equation}

The system (\ref{eq:Ps}) is solved with the algorithm given above for
system (\ref{eq:Bs}), except the Gaussian filter  with standard deviation  $\epsilon$
(convolution with $\phi_{\epsilon}$) is applied numerically to the
points $H(\varphi_{ij})$ before the discrete Fourier transform in
(\ref{eq:fourtheta}) is taken. We observe that the tendency towards curvature singularities
observed in Figure \ref{fig:circ} is suppressed in this regularized system for any
fixed $\epsilon$. However, as we take epsilon to machine precision,
the simulations recover the results of $\epsilon = 0$  (system (\ref{eq:Bs})), in particular the picture in Figure \ref{fig:circ}.

\subsection{The simulation in Figure \ref{fig:circ}}\label{secA2}
The curve illustrated in the Figure is the zero level set of $\varphi_{ij}(t)$
whose initial condition for the algorithm was specified as
$\varphi_0(x_{ij}) = \Phi(x_{ij})$, where 
\begin{equation}
\label{eq:circphi}
\Phi(x) = \cos(2 \pi | x - \xi|), \quad \xi =
\left(\frac{1}{2},\frac{1}{2}\right) 
\end{equation}
has precisely the circle of radius one-half
$\Sone(\frac{1}{2})$ centered at $\xi$ as its zero level set. The patch
moves upwards, as does the curve $\partial P(t)$. To depict the
changing shape of $\partial P$ in time, the frame where the curve is
drawn follows this
movement.

While patch solutions to
our problem maintain their area as they evolve in time, the sharpening
of the $\partial P(t)$ in the fixed-grid simulations results in
rapid decreases in the area once the variations of the curve are of the scale $h =
1/N$. Thus for
the resolution given by $N = 1024$, we terminate the patch simulation at $t =
100$, before the relative error of the patch area is more than $0.01$.

\section{Acknowledgments}
I thank my advisor, Peter Constantin, for suggesting this problem to me,
and for
his guidance thereof. I also express my gratitude to Greg Hammett, for his
guidance on developing the simulation. The author thanks the anonymous reviewer \#2 for their helpful comments. During this project, the author was partially supported
by the Ford Foundation and the NSF Graduate Research Fellowship grant DGE-2039656.
\bibliographystyle{nar}
\bibliography{local}




\end{document}